\newlength{\bibitemsep}\setlength{\bibitemsep}{.10\baselineskip plus .05\baselineskip minus .05\baselineskip}
\newlength{\bibparskip}\setlength{\bibparskip}{0pt}
\let\oldthebibliography\thebibliography
\renewcommand\thebibliography[1]{%
  \oldthebibliography{#1}%
  \setlength{\parskip}{\bibitemsep}%
  \setlength{\itemsep}{\bibparskip}%
}
\newtheorem{theorem}{Theorem}[section]
\newtheorem{definition}[theorem]{Definition}
\newtheorem{proposition}[theorem]{Proposition}
\newtheorem{prop}[theorem]{Proposition}
\newtheorem{corollary}[theorem]{Corollary}
\newtheorem{lemma}[theorem]{Lemma}
\newtheorem{remark}[theorem]{Remark}
\newtheorem{example}[theorem]{Example}
\newtheorem{examples}[theorem]{Examples}
\newtheorem{foo}[theorem]{Remarks}
\newcommand{\brak}[1]{\left(#1\right)} 
\newcommand{\norm}[1]{{\left\lVert{#1}\right\rVert}}
\def\vint{\mathop{\mathchoice%
          {\setbox0\hbox{$\displaystyle\intop$}\kern 0.22\wd0%
           \vcenter{\hrule width 0.6\wd0}\kern -0.82\wd0}%
          {\setbox0\hbox{$\textstyle\intop$}\kern 0.2\wd0%
           \vcenter{\hrule width 0.6\wd0}\kern -0.8\wd0}%
          {\setbox0\hbox{$\scriptstyle\intop$}\kern 0.2\wd0%
           \vcenter{\hrule width 0.6\wd0}\kern -0.8\wd0}%
          {\setbox0\hbox{$\scriptscriptstyle\intop$}\kern 0.2\wd0%
           \vcenter{\hrule width 0.6\wd0}\kern -0.8\wd0}}%
          \mathopen{}\int}
\newcommand{\R}{\mathbb R}
\newcommand{\DF}{\mathcal{E}}
\newcommand{\Ecal}{\mathcal E}
\newcommand{\B}{\mathbf B}
\newcommand{\ve}{\varepsilon}
\title{Besov class via heat semigroup on Dirichlet spaces I: Sobolev type inequalities}
\author{Patricia Alonso Ruiz, 
Fabrice Baudoin,
Li Chen,
Luke~G. Rogers, 
Nageswari Shanmugalingam,
Alexander Teplyaev}
\date{}
\begin{document}

\maketitle

\begin{abstract}
We introduce heat semigroup-based Besov classes in the general framework of Dirichlet spaces. General properties of those classes are studied and quantitative regularization estimates for the heat semigroup in this scale of spaces  are obtained. As a highlight of the paper, we obtain  a far reaching $L^p$-analogue, $p \ge 1$, of the   Sobolev inequality that was  proved for $p=2$ by N. Varopoulos under the assumption of  ultracontractivity  for the heat semigroup. The case $p=1$ is of special interest since it  yields isoperimetric type inequalities.
\end{abstract}

\tableofcontents

\section{Introduction}

\subsection*{Motivation}

The family of Sobolev inequalities plays a major role in analysis (see for instance \cite{saloff2002} and references therein). In the Euclidean space $\mathbb{R}^n$, $n \ge 2$, it reads
\begin{align}\label{sobo-intro}
\| f \|_{L^q (\mathbb{R}^n)} \le C \| \, |  \nabla f  | \, \|_{L^p (\mathbb{R}^n)}, \quad f \in C_0^\infty(\mathbb{R}^n)
\end{align}
where $1 \le p <n$, $q=\frac{np}{n-p}$, and the constant $C$ depends on $n$ and $p$. 

 In the context of a measure space $(X, \mu)$ equipped with a symmetric Dirichlet form  $\mathcal{E}$ with domain $\mathcal{F}$, N. Varopoulos proved in \cite{Var85} that a  heat kernel upper bound of the type $p_t(x,y)\le \frac{C}{t^{n/2}}$, $n >2 $,  implies  the following Sobolev inequality
 \begin{align}\label{sobo-intro2}
 \| f \|_{L^q (X,\mu)} \le C \sqrt{\mathcal{E}(f,f)}, \quad f \in \mathcal F,
 \end{align}
where $q=\frac{2n}{n-2}$. 

When applied to the Euclidean space equipped with the standard Dirichlet form $\mathcal{E}(f,f)=\| \, |  \nabla f  | \, \|^2_{L^2 (\mathbb{R}^n)}$, Varopoulos' theorem yields the case $p=2$ in \eqref{sobo-intro}. When $p \neq 2$,  it is  natural to look for results extending the inequalities \eqref{sobo-intro} to Dirichlet spaces.  In the present paper, among other results, and without using any gradient structure (e.g. \cite{AMP,HKST}) we extend Varopoulos' theorem to any $p \ge 1$ and prove the following:  

\begin{theorem}\label{sobo theorem intro}
Let $(X,\mu,\mathcal{E},\mathcal{F})$ be a Dirichlet space. Let $\{P_{t}\}_{t\in[0,\infty)}$ denote the Markovian semigroup associated with $(X,\mu,\mathcal{E},\mathcal{F})$.  Let $p \ge 1$.
\begin{enumerate}
\item Assume that $P_t$ admits a measurable heat kernel $p_t(x,y)$ satisfying, for some
$C>0$ and $\beta >0$,
\begin{equation}\label{eq:subGauss-upper3 intro}
p_{t}(x,y)\leq C t^{-\beta}
\end{equation}
for $\mu\times\mu$-a.e.\ $(x,y)\in X\times X$, and for each $t\in\bigl(0,+\infty \bigr)$. 
\item Assume that there exist $\alpha >0$ and $C>0$ such that for every $f \in L^p(X,\mu)$
 \begin{align}\label{local to global}
\| f \|_{p,\alpha} \le C \liminf_{t \to 0} t^{-\alpha} \left( \int_X \int_X |f(x)-f(y) |^p p_t (x,y) d\mu(x) d\mu(y) \right)^{1/p},
\end{align}
where 
 \begin{align}\label{norm intro}
\| f \|_{p,\alpha}:=\sup_{t > 0} t^{-\alpha} \left( \int_X \int_X |f(x)-f(y) |^p p_t (x,y) d\mu(x) d\mu(y) \right)^{1/p}.
\end{align}
\end{enumerate}
Then, if  $0<\alpha <\beta $ and $ p < \frac{\beta}{\alpha} $, there exists a 
constant $C_{p,\alpha,\beta}>0$ such that for every $f \in L^p(X,\mu)$,
\[
\| f \|_{L^q(X,\mu)} \le C_{p,\alpha,\beta} \| f \|_{p,\alpha},
\]
where $q=\frac{p\beta}{ \beta-p \alpha}$.
\end{theorem}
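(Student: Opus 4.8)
The plan is to combine two elementary estimates — a pseudo-Poincaré inequality coming straight from the definition of $\|\cdot\|_{p,\alpha}$, and an ultracontractive smoothing bound coming from \eqref{eq:subGauss-upper3 intro} — into a weak-type $(p,q)$ inequality, and then to upgrade it to the strong inequality by a Maz'ya-type truncation argument in which assumption~(2) plays the decisive role. Write $E_t(f)^p=\iint_{X\times X}|f(x)-f(y)|^p p_t(x,y)\,d\mu(x)\,d\mu(y)$, so that $\|f\|_{p,\alpha}=\sup_{t>0}t^{-\alpha}E_t(f)$. First I would record the pseudo-Poincaré inequality $\|P_tf-f\|_{L^p}\le E_t(f)\le t^\alpha\|f\|_{p,\alpha}$, which follows from Jensen's inequality applied to the sub-probability kernel $p_t(x,\cdot)$ (the Markovian property gives $\int_X p_t(x,y)\,d\mu(y)\le 1$). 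Next, from \eqref{eq:subGauss-upper3 intro} and $\int_X p_t(x,y)\,d\mu(y)\le1$ one gets by H\"older that $\|p_t(x,\cdot)\|_{L^{p'}}\le C^{1/p}t^{-\beta/p}$, whence the smoothing estimate $\|P_tf\|_{L^\infty}\le C^{1/p}t^{-\beta/p}\|f\|_{L^p}$.

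The decisive improvement is to replace $\|f\|_{L^p}$ in this last bound by $\|f\|_{p,\alpha}$. I would telescope $P_tf=\sum_{k\ge0}P_{2^kt}(I-P_{2^kt})f$ (the tail vanishes in $L^\infty$ since $\|P_sf\|_{L^\infty}\le C^{1/p}s^{-\beta/p}\|f\|_{L^p}\to0$ as $s\to\infty$), estimate each term by $\|P_{2^kt}(I-P_{2^kt})f\|_{L^\infty}\le C(2^kt)^{-\beta/p}(2^kt)^\alpha\|f\|_{p,\alpha}$, and sum the resulting geometric series — which converges precisely because $\alpha<\beta/p$, i.e. $p<\beta/\alpha$. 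This yields $\|P_tf\|_{L^\infty}\le C\,t^{-(\beta/p-\alpha)}\|f\|_{p,\alpha}$, now depending only on the seminorm. Splitting $f=(I-P_t)f+P_tf$, choosing $t=t(\lambda)$ so that $\|P_tf\|_{L^\infty}\le\lambda/2$, and applying Chebyshev to $\|(I-P_t)f\|_{L^p}\le t^\alpha\|f\|_{p,\alpha}$ then produces the weak-type estimate $\mu(|f|>\lambda)\le C(\|f\|_{p,\alpha}/\lambda)^q$; the exponent arithmetic closes off exactly at $q=\frac{p\beta}{\beta-p\alpha}$.

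To pass from weak to strong $L^q$ I would use the truncation method. Assuming $f\ge0$ (legitimate since $|\cdot|$ is a normal contraction, so $\||f|\|_{p,\alpha}\le\|f\|_{p,\alpha}$), set the dyadic layers $f_k=(f-2^k)_+\wedge 2^k$, which satisfy $\sum_k f_k=f$ and, pointwise, $\sum_k|f_k(x)-f_k(y)|=|f(x)-f(y)|$; because $p\ge1$ this gives the layer orthogonality $\sum_k E_t(f_k)^p\le E_t(f)^p$ for every $t$. The genuine obstacle is that $\|\cdot\|_{p,\alpha}$ is a \emph{supremum} over $t$, so this per-scale inequality does not by itself bound $\sum_k\|f_k\|_{p,\alpha}^p$. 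This is exactly where assumption~(2), inequality~\eqref{local to global}, is indispensable: it lets me replace $\|f_k\|_{p,\alpha}$ by $C\liminf_{t\to0}t^{-\alpha}E_t(f_k)$, and the superadditivity of $\liminf$ combined with the layer orthogonality yields $\sum_k\|f_k\|_{p,\alpha}^p\le C\,\|f\|_{p,\alpha}^p$. Since $q>p$, the embedding $\ell^p\hookrightarrow\ell^q$ gives $\sum_k\|f_k\|_{p,\alpha}^q\le (C\|f\|_{p,\alpha})^q$.

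Finally, because $\{f>2^{k+1}\}\subseteq\{f_k=2^k\}$, the weak-type bound applied to each $f_k$ controls $2^{kq}\mu(f>2^k)$ by $C\|f_k\|_{p,\alpha}^q$, so the layer-cake estimate $\|f\|_{L^q}^q\le C\sum_k 2^{kq}\mu(f>2^k)\le C\sum_k\|f_k\|_{p,\alpha}^q\le C'\|f\|_{p,\alpha}^q$ closes the argument. I expect the interchange of summation over layers with the supremum in the seminorm — which I can only carry out through hypothesis~(2) — to be the crux of the proof; the weak-type inequality itself is comparatively routine once the $\|f\|_{p,\alpha}$-dependent ultracontractive bound of the second step is in hand.
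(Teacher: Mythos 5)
Your argument is correct, and its second half --- the dyadic layers $f_k=(f-2^k)_+\wedge 2^k$, the bound $\sum_k\|f_k\|_{p,\alpha}^p\le C\|f\|_{p,\alpha}^p$ obtained from hypothesis~(2) together with superadditivity of $\liminf$, the embedding $\ell^p\hookrightarrow\ell^q$, and the final layer-cake estimate --- is essentially the paper's proof of Theorem~\ref{Sobolev} (your pointwise identity $\sum_k|f_k(x)-f_k(y)|=|f(x)-f(y)|$ even gives the truncation inequality with constant $1$, where the paper invokes a lemma from \cite{BCLS}). Where you genuinely diverge is the weak-type inequality. The paper splits $f=(f-P_tf)+P_tf$ and bounds $\|P_tf\|_{L^\infty}\le C^{1/q}t^{-\beta/q}\|f\|_{L^q(X,\mu)}$, which yields Lemma~\ref{gagliardo}, a weak estimate whose right-hand side $\|f\|_{p,\alpha}\|f\|_{L^q(X,\mu)}^{q\alpha/\beta}$ still carries a Lebesgue norm of $f$; removing that dependence requires a second truncation argument (Theorem~\ref{pol}), in which Lemma~\ref{gagliardo} is applied with $q=1$ to each layer $f_k$ and a self-improving inequality for $M(f)=\sup_k 2^k\mu(f\ge 2^k)^{1/q}$ is closed. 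You instead upgrade the ultracontractive bound itself: telescoping $P_tf=\sum_{k\ge0}P_{2^kt}(I-P_{2^kt})f$ and estimating each summand by the $L^p\to L^\infty$ bound combined with the pseudo-Poincar\'e inequality of Lemma~\ref{pseudo-Poincare} gives $\|P_tf\|_{L^\infty}\le Ct^{\alpha-\beta/p}\|f\|_{p,\alpha}$, the geometric series converging precisely because $p<\beta/\alpha$; a single split-and-optimize step plus Chebyshev then produces the weak-type bound, and your exponent arithmetic correctly lands on $q=\frac{p\beta}{\beta-p\alpha}$. Your route costs one fewer truncation argument, makes the role of the hypothesis $p<\beta/\alpha$ transparent (convergence of the series), and yields as a byproduct the smoothing estimate $P_t:\mathbf{B}^{p,\alpha}(X)\to L^\infty(X,\mu)$, which is of independent interest; the paper's route, by contrast, produces the intermediate Gagliardo--Nirenberg-type inequality of Lemma~\ref{gagliardo}, which is valid for all $1\le p,q<\infty$ and $\alpha>0$ with no restriction $p<\beta/\alpha$, and which the paper reuses with $q=1$ in a way that connects naturally to the isoperimetric statements of Section~\ref{S:IsopIneq}.
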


Note that it is possible to verify condition~\eqref{eq:subGauss-upper3 intro} in many classical and fractal examples because, according to \cite{BCLS,CKS87,KigB},  it is equivalent to the Nash inequality 
\begin{equation}\label{Nash}
\| f \|_{L^2(X,\mu)}^{2+2/\beta} 
\leqslant 
C \mathcal{E}(f,f) \| f \|_{L^1(X,\mu)}^{2/\beta}.
\end{equation}

In the case $p=2$, we will prove that the condition \eqref{local to global} is always satisfied with $\alpha =\frac{1}{2}$ and that we have $\mathcal{E}(f,f)   \simeq \| f \|^2_{2,1/2}$. As a consequence,  Theorem \ref{sobo theorem intro} is indeed an extension of the  Varopoulos theorem. On the other hand, when applied to the Euclidean space equipped with the standard Dirichlet form $\mathcal{E}(f,f)=\| \, |  \nabla f  | \, \|^2_{L^2 (\mathbb{R}^n)}$, for every $p \ge 1$  the condition \eqref{local to global} is always satisfied with $\alpha =\frac{1}{2}$ and Theorem \ref{sobo theorem intro} yields all of the Sobolev inequalities \eqref{sobo-intro} because in that case we have  $\| f \|_{p,1/2} \simeq \| \, |  \nabla f  | \, \|_{L^p (\mathbb{R}^n)}$.

Theorem \ref{sobo theorem intro} is proved by applying the methods of \cite{BCLS} in the general framework of Dirichlet spaces. In a first step, we prove in Theorem \ref{pol} that the assumption \eqref{eq:subGauss-upper3 intro} alone implies a weak Sobolev inequality
 \[
\sup_{s \ge 0}\, s\, \mu \left( \{ x \in X\, :\, | f(x) | \ge s \} \right)^{\frac{1}{q}} \le C_{p,\alpha,\beta} \| f \|_{p,\alpha},
\]
where $q=\frac{p\beta}{ \beta -p \alpha}$. In a second step we prove that if the condition \eqref{local to global} is also assumed, then this weak Sobolev inequality implies the strong one:
\[
\| f \|_{L^q(X,\mu)} \le C_{p,\alpha,\beta} \| f \|_{p,\alpha}.
\]
The case $p=1$ is of special interest. In that case, weak Sobolev inequalities applied to indicator functions of sets are related to  isoperimetric type inequalities, and by adapting a method of M. Ledoux, we obtain in Proposition \ref{prop:iso} an isoperimetric type inequality with explicit constant.  
 We note that it is well-known since the works of Maz'ja and Federer-Fleming that the case $p=1$ in \eqref{sobo-intro} is equivalent  to the isoperimetric inequality in $\mathbb{R}^n$ (see \cite{Maz85}) and is intimately connected to the theory of BV functions. It is therefore expected that for $p=1$ and under assumption  \eqref{local to global} Theorem \ref{sobo theorem intro} should yield isoperimetric type inequalities and possibly also a theory of BV functions valid in very general Dirichlet spaces. Moreover, the seminorm $\| f \|_{1,\alpha}$ provides a natural notion of variation of a function in that context. In more restrictive contexts, this remark  is  made very precise in the papers \cite{ABCRST2} and \cite{ABCRST3}, where it is moreover shown that the condition \eqref{local to global} is satisfied if the underlying space $X$ satisfies a weak Bakry-\'Emery type curvature condition.

In view of Theorem \ref{sobo theorem intro}, given a Dirichlet space $(X,\mu,\mathcal{E},\mathcal{F})$ with semigroup  $\{P_{t}\}_{t\in[0,\infty)}$ it is natural to thoroughly study the family of Besov type spaces
\[
\mathbf{B}^{p,\alpha}(X)=\left\{ f \in L^p(X,\mu), \| f \|_{p,\alpha}:= \sup_{t >0} t^{-\alpha} \left( \int_X P_t (|f-f(y)|^p)(y) d\mu(y) \right)^{1/p}<+\infty \right\}.
\]

\subsection*{Structure of the paper}

This paper is structured as follows. In Section~2 we introduce the notations and recall some basic facts about Dirichlet forms and their associated heat semigroups. In Section~3 we describe the basic setup of Dirichlet forms and our heat semigroup-based
Besov spaces $\mathbf{B}^{p,\alpha}(X)$, and conclude with a metric characterization of these spaces under the hypothesis that the heat semigroup has a kernel with sub-Gaussian estimates; this characterization, due to Pietruska-Pa{\l}uba~\cite{P-P10}, does not play a major role in the arguments introduced in this paper, but is included here as an illustration and  because of its  usefulness in concrete examples.

Section~4 is devoted to obtaining fundamental properties of the Besov classes, including the Banach space property, reflexivity, interpolation properties 
and locality in time. We show that  certain of the Besov spaces are non-trivial, in particular by showing in~Proposition~\ref{prop:energyasbesov} that the Besov space $\mathbf{B}^{2,1/2}(X)$ is precisely the domain $\mathcal{F}$ of
the Dirichlet form, the analog of the classical Sobolev space $W^{1,2}(X)$. This is in contrast to the classical (metric-based) Besov space theory, where $B_{2,\infty}^1(\R^n)$ consists only of constant functions, see~\cite{Bre02}.
From the preceding one deduces by elementary convexity considerations that $\mathbf{B}^{p,1/2}(X)$ is dense in $L^p$ if $1\leq p\leq2$.   We also give examples that establish the range of possibilities for $\mathbf{B}^{p,1/2}(X)$ when $p>2$: in Proposition~\ref{Besov Rn} we describe a smooth setting in which $\mathbf{B}^{p,1/2}(X)$ contains $C^\infty_0(X)$, but in Corollary~\ref{singular Kusuoka} we provide a class of Dirichlet forms for which the Besov spaces $\mathbf{B}^{p,1/2}(X)$ are trivial (contain only constant functions) when $p>2$.  Moreover, we begin to analyze the relationship between the Besov spaces $\mathbf{B}^{p,\alpha}(X)$ and the domain of the fractional powers of the generator $L$ of the Dirichlet form, showing  in particular that $(-L)^s:\mathbf{B}^{p,\alpha}(X)\to L^p$ is bounded for $0<s<\alpha$.

In Section~5, we  prove that, when $1<p \le 2$, the heat semigroup is always continuous as an operator $ L^p(X,\mu) \to \B^{p,1/2}(X)$, see Theorem \ref{continuity Besov chapter 1}.  It is remarkable that this is true in any Dirichlet space without any further assumption. In~\cite{ABCRST2} and~\cite{ABCRST3}, we will see that for $p >2$, this continuity is valid under weak Bakry-\'Emery type curvature conditions.  We use this result to establish some refinements of our triviality and non-triviality results from Section~4 and summarize them in terms  of critical exponents for density and triviality of the Besov spaces.   The results of this section will play an important role in \cite{ABCRST3}. In particular, the Besov critical exponents on fractal sets will be shown to be closely related to the geometry of these sets.

In Section~6 we will consider Sobolev-type embedding theorems for the Besov classes $\mathbf{B}^{p,\alpha}(X)$. Our main assumption is that the underlying Dirichlet space admits a heat kernel $p_t(x,y)$ satisfying a global upper bound of the type $p_t(x,y) \le c t^{-\beta}$.  In Dirichlet spaces, the proof of the existence of a Sobolev inequality with $p=2$ under this type of heat kernel estimates goes back to a celebrated work by N.~Varopoulos (see Chapter~2 of~\cite{varopoulos} and the references therein). To study the case $p\neq 2$, we make use of the ideas and methods developed in~\cite{BCLS,saloff2002} and more recently in \cite{BK}. Those methods are general enough to apply to our setting and underline the fact that our Besov classes provide a natural framework for a general theory of BV functions and isoperimetric inequalities on arbitrary Dirichlet spaces. These outline the beginnings of a connection between our Besov classes and isoperimetric type estimates that will be further explored under various  assumptions in the works~\cite{ABCRST2,ABCRST3,ABCRST4}.
Among many others, one of the future applications of Besov classes and isoperimetry will 
be to study diffusions on pattern spaces of quasicrystals~\cite{A-RHTT}, 
 corresponding to a unique strongly local but not strictly local Dirichlet form for which energy measures are absolutely continuous. 


In Section~7 we give some further applications of the main ideas under the assumption of either a Poincar\'e inequality or a log-Sobolev inequality. The idea is to replace the ultracontractivity estimate assumption of Section~6 by a supercontractive or hypercontractive one and explore the corresponding isoperimetric information. Such results may potentially be applied in infinite-dimensional situations like the Wiener space.

We conclude this introduction by  noting some references from the existing literature that are closest to our work.   The literature on Besov spaces is so large that it is not possible to be exhaustive, but we hope the following may be helpful to the reader.  Further references and comments will be given throughout the text. For many equivalent descriptions of the  Besov-Nikol'skii  spaces in $\mathbb{R}^n$, including Poisson heat kernel characterizations we refer to~\cite{Taibleson}. For the classical theory of Besov spaces from the point of view of interpolation theory, we refer for  example to the book of Triebel~\cite{Trie}. The relationship between Besov spaces and the Laplace operator or its square root in different settings has been studied from various points of view for some time; see, for example, \cite{CSC91} on Lie groups,  \cite{BDY12} on spaces with polynomial upper bound on the volume and Poisson-type heat kernel bound, \cite{HZ} on fractal metric spaces, and \cite{GrigLiu15} on metric measure spaces with sub-Gaussian heat kernel estimates and certain regularity assumptions. We also note  that Besov spaces can be characterized via wavelet frames, see~\cite{CKP12,KP15}. Finally, our definition of Besov classes is particularly closely connected to the work of Pietruska-Pa{\l}uba~\cite{P-P10}, and we learned much about  the relevance of this approach to Dirichlet forms from work of Grigor'yan, Hu and Lau~\cite{GHL:TAMS2003,MR2743439}.

\subsection*{Acknowledgments}
The authors thank Naotaka Kajino for many stimulating and helpful discussions.
The authors also thank the anonymous referee for comments that helped improve the exposition of the paper.
P.A-R. was partly supported by the Feodor Lynen Fellowship, Alexander von Humboldt Foundation (Germany) the grant DMS \#1951577 and \#1855349 of the NSF (U.S.A.). 
F.B. was partly supported by the grant DMS \#1660031 of the NSF (U.S.A.) 
and a Simons Foundation Collaboration grant.  
L.R. was partly supported by the grant DMS \#1659643 of the NSF (U.S.A.).  
N.S. was partly supported by the grants DMS \#1800161 and \#1500440 of the NSF (U.S.A.). 
A.T. was partly supported by the grant DMS \#1613025 of the NSF (U.S.A.).


\section{Preliminaries}\label{sec2}

In this section we introduce the notations and notions used throughout the paper and, for convenience of the reader, collect some standard definitions and known results that will be used later. The book~\cite{FOT} is a classical reference on the theory of Dirichlet forms and we refer to it for further details. We also refer to \cite{BouleauHirsch} for an exposition of the theory that does not use the hypothesis of regularity of the form. For the general theory of heat semigroups we refer for instance to~\cite{EBD} or~\cite{EN}.

\subsection{Dirichlet forms}

Throughout the paper, let $X$ be a good measurable space (like a Polish space) equipped with a $\sigma$-finite measure $\mu$. By good measurable space, we mean a measurable space for which the measure decomposition theorem holds and for which there exists a countable family generating the $\sigma$-algebra of $X$ (see~\cite[page 7]{BGL} for a discussion about good measurable spaces).

 Let $(\mathcal{E},\mathcal{F}=\mathbf{dom}(\mathcal{E}))$ be a densely defined closed symmetric form on $L^2(X,\mu)$. A function $v$ on $X$ is called a normal contraction of the function $u$ if for almost every $x,y \in X$,
\[
| v(x)-v(y)| \le |u(x) -u(y)| \text{    and    } |v(x)| \le |u(x)|.
\]
The form $\mathcal{E}$ is called a Dirichlet form if it is Markovian, that is, it has the property that if $u \in \mathcal{F}$ and $v$ is a normal contraction of $u$ then $v \in \mathcal{F}$ and $\mathcal{E}(v,v) \le \mathcal{E} (u,u)$. In this paper we always assume that $\mathcal{E}$ is a Dirichlet form and refer to  $(X,\mu,\mathcal{E},\mathcal{F})$ as a Dirichlet space. Some basic properties of Dirichlet forms are collected in~\cite[Theorem 1.4.2]{FOT}. In particular, we note that $\mathcal{F} \cap L^\infty(X,\mu)$ is an algebra and $\mathcal F$ is a Hilbert space with the $\mathcal{E}_1$-norm 
\begin{equation}\label{e-E1}
\|f\|_{\mathcal{E}_1}:=\left( \| f \|_{L^2(X,\mu)}^2 + \mathcal{E}(f,f) \right)^{1/2}.
\end{equation} 

The Dirichlet space  $(X,\mu,\mathcal{E},\mathcal{F})$ is called regular if  $X$ is a locally compact topological space,   $\mu$ is a Radon measure whose support is $X$ and $(X,\mu,\mathcal{E},\mathcal{F})$ admits a core. If we denote by $C_c(X)$ the space of continuous functions with compact support in $X$, we recall that a core for $(X,\mu,\mathcal{E},\mathcal{F})$ is a subset $\mathcal{C}$ of $C_c(X) \cap \mathcal{F}$ which is dense in $C_c(X)$ in the supremum norm and dense in $\mathcal{F}$ in the $\mathcal{E}_1$-norm.

 If  $\mathcal{E}$ is regular, 
 then  for every $f\in \mathcal F\cap L^{\infty}(X,\mu)$, we can define the energy measure $\nu_{f}$ in the sense of~\cite{Beurling-Deny}  through the formula
\[
\int_X \phi d\nu_{f} = \mathcal{E}(f\phi,f)-\frac12 \mathcal{E}(\phi,f^2), \quad \phi\in \mathcal F \cap C_c(X),
\]
see \cite[Theorem 4.3.11]{ChenFukushima}.
Then $\nu_{f}$ can be extended to all $f\in \mathcal F$ by truncation, that is, for each positive integer we consider
$f_n:=\max\{-n,\min\{n,f\}\}$, and set $\nu_f$ to be the weak limit of the sequence of measures $\nu_{f_n}$.

In this paper, most of the time we will not need to assume that $(X,\mu,\mathcal{E},\mathcal{F})$ is regular, so if the regularity assumption is needed, it will be stated out explicitly.

\subsection{Heat semigroup}

Let $\{P_{t}\}_{t\in[0,\infty)}$ denote the self-adjoint  semigroup on $L^2(X,\mu)$ associated with the Dirichlet space $(X,\mu,\mathcal{E},\mathcal{F})$ and $L$ the infinitesimal generator of $\{P_{t}\}_{t\in[0,\infty)}$; see~\cite[Section 1.4]{FOT}). The semigroup $\{P_{t}\}_{t\in[0,\infty)}$ is referred to as the heat semigroup on $(X,\mu,\mathcal{E},\mathcal{F})$.

  The following spectral theory lemma can be found in~\cite[Proposition~1.2.3]{BouleauHirsch} or in~\cite[Section 4]{Gri}. It shows that one can recover the Dirichlet form $\mathcal{E}$ and its domain from the semigroup $\{P_{t}\}_{t\in[0,\infty)}$.

\begin{lemma}\label{lem:Dirich-from-Pt}
Denoting $\langle \cdot, \cdot \rangle$ as the inner product in $L^2(X,\mu)$, for $f\in L^2(X,\mu)$ we have that
\[
0<t\mapsto \frac{1}{t}\langle (I-P_t)f,f\rangle
\]
is a decreasing function. Moreover, the limit $\lim_{t\to 0^+}\frac{1}{t}\langle (I-P_t)f,f\rangle$ exists if and only if $f \in \mathcal{F}$, in which case,
\begin{equation}\label{eq:construct-Lap}
\mathcal{E}(f,f)=\lim_{t\to 0^+}\frac{1}{t}\langle (I-P_t)f,f\rangle.
\end{equation}
\end{lemma}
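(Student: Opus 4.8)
The plan is to diagonalize the self-adjoint semigroup $\{P_t\}$ via the spectral theorem and reduce every assertion to an elementary monotonicity fact about a scalar function. Since $L$ is a nonpositive self-adjoint operator on $L^2(X,\mu)$, I would write its spectral resolution as $-L=\int_{[0,\infty)}\lambda\,dE_\lambda$, so that $P_t=\int_{[0,\infty)}e^{-\lambda t}\,dE_\lambda$. For fixed $f\in L^2(X,\mu)$ introduce the finite positive spectral measure $d\mu_f(\lambda)=d\langle E_\lambda f,f\rangle$ on $[0,\infty)$, whose total mass is $\|f\|_{L^2(X,\mu)}^2$. The starting point is then the identity
\[
\frac{1}{t}\langle (I-P_t)f,f\rangle=\int_{[0,\infty)}\frac{1-e^{-\lambda t}}{t}\,d\mu_f(\lambda).
\]

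First I would establish the monotonicity. The key point is that, for each fixed $\lambda\geq 0$, the function $t\mapsto\frac{1-e^{-\lambda t}}{t}$ is nonincreasing on $(0,\infty)$; this follows at once from the representation $\frac{1-e^{-\lambda t}}{t}=\lambda\int_0^1 e^{-\lambda t s}\,ds$, since for each $s$ the integrand decreases in $t$. Integrating this pointwise monotonicity against the positive measure $\mu_f$ shows that $t\mapsto\frac{1}{t}\langle(I-P_t)f,f\rangle$ is decreasing, which is the first claim. Next, to treat the limit, I would note that as $t\downarrow 0$ the integrands $\frac{1-e^{-\lambda t}}{t}$ increase monotonically and converge pointwise to $\lambda$ for every $\lambda\geq 0$, so by the monotone convergence theorem
\[
\lim_{t\to 0^+}\frac{1}{t}\langle(I-P_t)f,f\rangle=\int_{[0,\infty)}\lambda\,d\mu_f(\lambda)\in[0,+\infty],
\]
with the limit finite precisely when $\int_{[0,\infty)}\lambda\,d\mu_f(\lambda)<\infty$.

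Finally I would identify these spectral quantities with the form. The standard correspondence between a closed symmetric form and its associated nonnegative self-adjoint operator gives $\mathcal{F}=\mathbf{dom}\bigl((-L)^{1/2}\bigr)=\{f\in L^2(X,\mu):\int_{[0,\infty)}\lambda\,d\mu_f(\lambda)<\infty\}$, and $\mathcal{E}(f,f)=\|(-L)^{1/2}f\|_{L^2(X,\mu)}^2=\int_{[0,\infty)}\lambda\,d\mu_f(\lambda)$ for such $f$. Combining this with the previous display yields both the equivalence that the limit exists if and only if $f\in\mathcal{F}$, and the identity $\mathcal{E}(f,f)=\lim_{t\to0^+}\frac{1}{t}\langle(I-P_t)f,f\rangle$.

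The main obstacle is purely organizational rather than technical: the one genuinely nontrivial input is the spectral-theoretic characterization of $(\mathcal{E},\mathcal{F})$ in terms of $L$, which must be invoked to connect the semigroup-theoretic limit to the Dirichlet form. Everything else reduces to the monotone convergence theorem and the elementary monotonicity of $\frac{1-e^{-\lambda t}}{t}$; the only care needed is to allow the value $+\infty$ in the limiting step, so that the monotone convergence argument captures exactly the case $f\notin\mathcal{F}$.
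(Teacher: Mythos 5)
Your proof is correct: the paper itself gives no proof of this lemma, instead citing it as a known spectral-theory fact (Bouleau--Hirsch, Proposition~1.2.3, and Grigor'yan), and your argument via the spectral resolution $-L=\int_{[0,\infty)}\lambda\,dE_\lambda$, the monotonicity of $t\mapsto\frac{1-e^{-\lambda t}}{t}$, monotone convergence, and the identification $\mathcal{F}=\mathbf{dom}\bigl((-L)^{1/2}\bigr)$ is precisely the standard proof found in those references. So you have, in effect, reproduced the paper's (cited) approach rather than found a different one.
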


By definition, the semigroup $\{P_{t}\}_{t\in[0,\infty)}$ acts on $L^2(X,\mu)$. However, it inherits from the Markovian property of the Dirichlet form the sub-Markovian property: if $0 \le f \le 1$ then $0 \le P_t f \le 1$. This fundamental property allows us to develop an $L^p$ theory of the semigroup and from this classical theory (see for instance~\cite[Theorem~1.4.1 and~1.4.2]{EBD}), the following properties of the semigroup  $\{P_{t}\}_{t\in[0,\infty)}$ are known:
\begin{itemize}
\item The semigroup $\{P_{t}\}_{t\in[0,\infty)}$ maps $L^1(X,\mu) \cap L^\infty (X,\mu)$ into itself and may be extended, using the  Riesz-Thorin interpolation, to a 
contraction semigroup on $L^p (X,\mu)$ for each $1 \le p \le +\infty$. We will denote that extension also by $P_t$. We explicitly note that the contraction property reads:
\[
\| P_t f \|_{L^p(X,\mu)} \le \| f \|_{L^p(X,\mu)}, \quad f \in L^p (X,\mu),  1 \le p \le \infty .
\]
The semigroup $\{P_{t}\}_{t\in[0,\infty)}$ is said to be conservative if $P_t 1=1$. 

\textbf{In this paper, we always assume that $\{P_t\}_{t\in[0,\infty)}$ is conservative.} 
This assumption is not overly restrictive, as it holds also for the standard Dirichlet form on the Wiener space,
see \cite{Kusuoka}.
\item The semigroup $\{P_{t}\}_{t\in[0,\infty)}$ is symmetric, i.e. for $1 \le p , q \le \infty$ with $\frac{1}{p}+\frac{1}{q}=1$,  $f \in L^p(X,\mu)$, $g \in L^q(X,\mu)$, $t \ge 0$,
\begin{align}\label{symmetry P}
\int_X (P_t f)(x) g(x) d\mu(x)=\int_X f(x) (P_t g)(x) d\mu(x).
\end{align}
\item  
The semigroup $\{P_{t}\}_{t\in[0,\infty)}$ is strongly continuous  on $L^p (X,\mu)$ for $1 \le p <+\infty$, i.e., 
\begin{align}\label{Strong continuity}
\|P_t f-f\|_{L^p(X,\mu)}\to 0, \quad \text{ as } t\to 0.
\end{align}
\item The semigroup $\{P_{t}\}_{t\in[0,\infty)}$ is an analytic semigroup on $L^p (X,\mu)$  for $1<p<+\infty$.  In particular, from \cite[page 101]{EN},   there exists a constant $C>0$ independent of $t>0$ (but depending on $p$) such that for every $f \in L^p(X,\mu)$,
\begin{align}\label{analytic bound}
\|LP_tf\|_{L^p(X,\mu)}\le \frac{C}{t}\|f\|_{L^p(X,\mu)}.
\end{align}
\end{itemize}

Since we  assume conservativeness, the semigroup $\{P_{t}\}_{t\in[0,\infty)}$ yields a family of heat kernel measures. Namely, from \cite[Theorem 1.2.3]{BGL}, for every bounded or non-negative measurable function $f:X \to \mathbb{R}$,
\begin{align}\label{heat kernel measure}
P_tf (x)=\int_X f(y) p_t(x,dy), \quad t \ge 0, x \in X,
\end{align}
where, for each $t>0$,  $p_t(x,dy)$ is a probability kernel (that is, for every $x \in X$, $p_t(x,\cdot)$ is a probability measure on $X$ and for every measurable set $A$, $x \to p_t(x,A)$ is measurable). Note that from the symmetry property of the heat semigroup, the measure defined on $X \times X$ by $\nu_t(A \times B)=\int_X 1_A P_t 1_B d\mu$ is symmetric, thus one has for every 
non-negative measurable function $F: X \times X \to \mathbb{R}$, 
\begin{align}\label{heat kernel measure symmetry}
\int_X  \int_X  F(x,y) p_t(x,dy) d\mu(x)=\int_X  \int_X F(x,y)  p_t(y,dx) d\mu(y).
\end{align}

We say that the semigroup $\{P_{t}\}_{t\in[0,\infty)}$ admits a heat kernel if the heat kernel measures have a density with respect to $\mu$, i.e. there exists a measurable function $p:\mathbb{R}_{>0} \times X \times X \to \mathbb{R}_{\ge 0}$, 
(and we denote $p(t,x,y)$ as $p_t(x,y)$ for $t>0$ and $x,y\in X$)
such that for every $t >0, x,y \in X$, $f \in L^p(X,\mu)$, $ 1 \le p \le \infty$,
\begin{align*}
P_t f(x) =\int_X p_t (x,y) f(y) d\mu(y).
\end{align*}
Many of our results do not require the existence of a heat kernel. The major exceptions are the Sobolev embeddings in Section~\ref{section sobolev}. This assumption will thus be explicitly stated when needed.

%

The following lemma is well known. It follows from the classical Jensen's inequality applied to \eqref{heat kernel measure}.

\begin{lemma}\label{lem:Pt-Holder}
Let $\Phi:\R\to[0,\infty)$ be a convex function. For $1\le p<\infty$ and all $f\in L^p(X,\mu)$ and $t \ge 0$ we have 
\[
\Phi(P_t(f))\le P_t(\Phi\circ f).
\]
In particular, for $1\le p<\infty$ and all $f\in L^p(X,\mu)$  and $t \ge 0$ we have
\[
|P_t(f) |^p \le P_t(|f|^p).
\]
\end{lemma}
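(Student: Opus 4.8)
The plan is to reduce the statement to the classical Jensen's inequality applied to the probability measure $p_t(x,dy)$. The key observation is that equation~\eqref{heat kernel measure} exhibits $P_t f(x) = \int_X f(y)\, p_t(x,dy)$ as an average of $f$ against a \emph{probability} measure, since conservativeness guarantees $P_t 1 = 1$, i.e. $p_t(x,X) = 1$ for every $x$. This is precisely the setting in which Jensen's inequality gives $\Phi\left(\int_X f\, d\nu\right) \le \int_X (\Phi\circ f)\, d\nu$ for any probability measure $\nu$ and convex $\Phi$.

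First I would fix $t \ge 0$ and $x \in X$, and apply the classical Jensen inequality to the probability measure $\nu = p_t(x,\cdot)$ and the convex function $\Phi$. This yields, pointwise in $x$,
\[
\Phi\bigl(P_t f(x)\bigr) = \Phi\left(\int_X f(y)\, p_t(x,dy)\right) \le \int_X \Phi\bigl(f(y)\bigr)\, p_t(x,dy) = P_t(\Phi\circ f)(x).
\]
To make this rigorous I must first check that the quantities are well defined: since $\Phi \ge 0$ is convex on $\R$, it is continuous and measurable, so $\Phi\circ f$ is a non-negative measurable function and the representation \eqref{heat kernel measure} applies to it, giving a well-defined (possibly infinite) value $P_t(\Phi\circ f)(x)$. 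For the left-hand side one needs $f(y)$ to be $p_t(x,dy)$-integrable for a.e.\ $x$; this follows because the $L^1$-contractivity of $P_t$ on $L^p \subset L^1_{\mathrm{loc}}$ together with $f \in L^p$ ensures $P_t |f|(x) < \infty$ for $\mu$-a.e.\ $x$, so $P_t f(x) = \int f\, p_t(x,dy)$ is finite a.e.\ and the argument of $\Phi$ is a genuine real number.

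The second assertion is then the special case $\Phi(s) = |s|^p$, which is convex on $\R$ for $p \ge 1$ and non-negative, so it falls directly under the first part and yields $|P_t f|^p \le P_t(|f|^p)$. I would state this explicitly as the corollary the authors want, since it is the form used repeatedly later (for instance in estimating the Besov seminorms $\|f\|_{p,\alpha}$).

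The only genuine subtlety, and hence the main point to be careful about, is the integrability justification that makes the pointwise Jensen step legitimate $\mu$-almost everywhere rather than merely formal. Once one records that conservativeness makes $p_t(x,\cdot)$ a probability measure and that the $L^p$-contraction property controls $P_t|f|$, the inequality is immediate and requires no further estimation. I expect no serious obstacle beyond this bookkeeping, since Jensen's inequality does all the real work.
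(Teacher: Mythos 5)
Your proof is correct and follows exactly the route the paper takes: the paper's entire justification is that the lemma ``follows from the classical Jensen's inequality applied to \eqref{heat kernel measure}'', i.e.\ from the fact that conservativeness makes $p_t(x,\cdot)$ a probability measure, which is precisely your argument (you in fact supply more integrability bookkeeping than the paper does, and your appeal to $L^p$-contractivity to get $P_t|f|(x)<\infty$ a.e.\ is the right justification, though the aside ``$L^p \subset L^1_{\mathrm{loc}}$'' is unnecessary and not literally valid in an abstract measure space). No gaps; the special case $\Phi(s)=|s|^p$ is handled just as in the paper.
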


%
%
%

\section{Heat semigroup-based Besov spaces }\label{sec:definition}

Let $(X,\mu,\mathcal{E},\mathcal{F})$ be a Dirichlet space and let $\{P_{t}\}_{t\in[0,\infty)}$ denote the associated  heat semigroup. As already pointed out, $\{P_{t}\}_{t\in[0,\infty)}$ is assumed to be conservative. Our basic definition of the (heat semigroup-based) Besov seminorm is the following:

\begin{definition}\label{def:Besov}
Let $p \ge 1$ and $\alpha \ge 0$. For $f \in L^p(X,\mu)$, we define the Besov  seminorm:
\[
\| f \|_{p,\alpha}= \sup_{t >0} t^{-\alpha} \left( \int_X P_t (|f-f(y)|^p)(y) d\mu(y) \right)^{1/p}.
\]
\end{definition}

Observe that in terms of the heat kernel measure \eqref{heat kernel measure}, one has:
\[
 \int_X P_t (|f-f(y)|^p)(y) d\mu(y)= \int_X \int_X |f(x)-f(y) |^p p_t (y, dx)  d\mu(y).
\]
Our goal in this paper  is to study the Besov type  spaces
\begin{align}\label{eq:def:Besov}
\mathbf{B}^{p,\alpha}(X)=\{ f \in L^p(X,\mu)\, :\,  \| f \|_{p,\alpha} <+\infty \}.
\end{align}
The norm on $\mathbf{B}^{p,\alpha}(X)$ is defined as:
\[
\| f \|_{\mathbf{B}^{p,\alpha}(X)} =\| f \|_{L^p(X,\mu)} + \| f \|_{p,\alpha}.
\]

\begin{remark}\label{rem-normalcontruction}
It is apparent that if $v$ is a normal contraction of $u\in \mathbf{B}^{p,\alpha}(X)$ then $v\in \mathbf{B}^{p,\alpha}(X)$ with 
$\|v\|_{p,\alpha}\le \|u\|_{p,\alpha}$ and $\|v\|_{\mathbf{B}^{p,\alpha}(X)}\le \|u\|_{\mathbf{B}^{p,\alpha}(X)}$.
This fact will be used from time to time without further comment.
\end{remark}

One has first the following example of the standard Dirichlet form on $\mathbb{R}^n$.

\begin{example}
If $X=\mathbb{R}^n$ and $\mathcal{E}$ is the standard Dirichlet form on $\mathbb{R}^n$, that is, for $f,g \in W^{1,2}(\mathbb{R}^n)$
we have 
\[
\mathcal{E}(f,g) =\int_{\mathbb{R}^n} \langle \nabla f (x) ,\nabla g (x) \rangle dx, 
\]
then, for $p \ge 1$ and $\alpha \ge 0$ the class $\mathbf{B}^{p,\alpha}(X)$ coincides with the  Besov-Nikol'skii class $B^{2\alpha}_{p,\infty} (\mathbb{R}^n)$ that consists of 
$f \in L^p(\mathbb{R}^n,dx)$ such that
\[
\sup_{h \in \mathbb{R}^n, h \neq 0} \frac{ \| f( \cdot+h)-f(\cdot) \|_p}{|h|^{2\alpha}} <+\infty.
\]
We refer, for instance, to \cite{AS} and \cite[Theorems~4 and~4*]{Taibleson}  
for several equivalent descriptions of those spaces.
\end{example}

Comparable Besov type spaces have previously been considered in the literature in some specific settings. A most relevant reference  is the paper by K.~Pietruska-Pa{\l}uba \cite{P-P10} (see also references therein). In particular, \cite{P-P10} provides a metric characterization of the spaces $\mathbf{B}^{p,\alpha}(X)$ on Dirichlet spaces that admit a heat kernel with Gaussian or sub-Gaussian heat kernel estimates.

\begin{theorem}[{\cite[Theorem~3.2]{P-P10}}]\label{sub gaussian intro}
Let $(X,\mu,\mathcal{E},\mathcal{F})$ be a 
Dirichlet space and let $d$ be a metric on $X$ compatible with
the topology of $X$. Assume that the metric space $(X,d)$ is Ahlfors $d_H$-regular and that  $\{P_{t}\}_{t\in[0,\infty)}$
admits a heat kernel $p_{t}(x,y)$ satisfying, for some
$c_{1},c_{2}, c_3, c_4 \in(0,\infty)$ and $d_{W}\in(1,\infty)$,
\begin{equation*}
c_{1}t^{-d_{H}/d_{W}}\exp\biggl(-c_{2}\Bigl(\frac{d(x,y)^{d_{W}}}{t}\Bigr)^{\frac{1}{d_{W}-1}}\biggr) \le p_{t}(x,y)\leq c_{3}t^{-d_{H}/d_{W}}\exp\biggl(-c_{4}\Bigl(\frac{d(x,y)^{d_{W}}}{t}\Bigr)^{\frac{1}{d_{W}-1}}\biggr)
\end{equation*}
for $\mu\times\mu$-a.e.\ $(x,y)\in X\times X$ for each $t\in\bigl(0,+\infty\bigr)$. Let $p \ge 1$ and $\alpha \ge 0$. We have 
\[
\mathbf{B}^{p,\alpha}(X)=\left\{f\in L^{p}(X,\mu)\, :\,  \sup_{r>0} \frac{1}{r^{\alpha d_W+d_{H}/p}}
 \biggl(\iint_{\Delta_r} 
 |f(x)-f(y)|^{p}\,d\mu(x)\,d\mu(y)\biggr)^{1/p}<\infty\right\}
\]
with comparable seminorms, where for $r>0$ the set $\Delta_r$ denotes the collection of all $(x,y)\in X\times X$ for which
$d(x,y)<r$.
\end{theorem}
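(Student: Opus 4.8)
The plan is to prove the two seminorms are comparable: to exhibit a constant $C\ge 1$ with $C^{-1}\|f\|_{p,\alpha}\le M_{p,\alpha}(f)\le C\|f\|_{p,\alpha}$ for every $f\in L^p(X,\mu)$, where
\[
M_{p,\alpha}(f):=\sup_{r>0}\frac{1}{r^{\alpha d_W+d_H/p}}\left(\iint_{\Delta_r}|f(x)-f(y)|^p\,d\mu(x)\,d\mu(y)\right)^{1/p}
\]
is the metric seminorm on the right-hand side. Once this comparison is in hand, the asserted equality of sets and comparability of seminorms are immediate. The whole argument rests on matching the time scale $t$ to the length scale $r$ through the sub-Gaussian relation $t=r^{d_W}$ (equivalently $r=t^{1/d_W}$): at time $t$ the kernel $p_t(x,y)$ has on-diagonal size $t^{-d_H/d_W}=r^{-d_H}$, consistent with the Ahlfors $d_H$-regularity $\mu(B(x,r))\simeq r^{d_H}$, and decays super-polynomially once $d(x,y)\gg r$. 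Throughout I would use the symmetry \eqref{heat kernel measure symmetry} to write the seminorm as the symmetric double integral of $|f(x)-f(y)|^p$ against $p_t(x,y)\,d\mu(x)\,d\mu(y)$.

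For the easier inequality $M_{p,\alpha}(f)\lesssim\|f\|_{p,\alpha}$ I would fix $r>0$, set $t=r^{d_W}$, and invoke the \emph{lower} kernel bound. On $\Delta_r$ one has $d(x,y)^{d_W}/t<1$, so the exponential factor is at least $e^{-c_2}$ and hence $p_t(x,y)\ge c_1e^{-c_2}r^{-d_H}$ there. Since the integrand is nonnegative, restricting to $\Delta_r$ gives
\[
\iint_{\Delta_r}|f(x)-f(y)|^p\,d\mu(x)\,d\mu(y)\le\frac{r^{d_H}}{c_1e^{-c_2}}\int_X\int_X|f(x)-f(y)|^p\,p_t(x,y)\,d\mu(x)\,d\mu(y)\le\frac{r^{d_H}}{c_1e^{-c_2}}\,t^{\alpha p}\,\|f\|_{p,\alpha}^p.
\]
As $t^{\alpha p}=r^{\alpha d_W p}$, dividing by $r^{\alpha d_W p+d_H}$ and taking $p$-th roots bounds the normalized integral by a fixed multiple of $\|f\|_{p,\alpha}$ uniformly in $r$; taking the supremum over $r$ closes this direction.

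The reverse inequality $\|f\|_{p,\alpha}\lesssim M_{p,\alpha}(f)$ is the main technical step and uses the \emph{upper} bound with a dyadic annular decomposition. Fixing $t>0$, $r=t^{1/d_W}$, I would split $X\times X$ into $\Delta_r$ and the annuli $A_k:=\{(x,y):2^k r\le d(x,y)<2^{k+1}r\}$, $k\ge0$. On $\Delta_r$ the upper bound gives $p_t(x,y)\le c_3 r^{-d_H}$, so this block contributes at most $c_3 r^{-d_H}(r^{\alpha d_W+d_H/p}M_{p,\alpha}(f))^p=c_3 r^{\alpha d_W p}M_{p,\alpha}(f)^p$, which after multiplication by $t^{-\alpha p}=r^{-\alpha d_W p}$ is exactly $c_3 M_{p,\alpha}(f)^p$. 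On $A_k$ one has $d(x,y)^{d_W}/t\ge 2^{k d_W}$, whence $p_t(x,y)\le c_3 r^{-d_H}\exp(-c_4 2^{k d_W/(d_W-1)})$, while $\iint_{A_k}|f(x)-f(y)|^p\le\iint_{\Delta_{2^{k+1}r}}|f(x)-f(y)|^p\le(2^{k+1}r)^{\alpha d_W p+d_H}M_{p,\alpha}(f)^p$. Multiplying again by $t^{-\alpha p}$, the $k$-th annulus contributes at most $c_3\,2^{(k+1)(\alpha d_W p+d_H)}\exp(-c_4 2^{k d_W/(d_W-1)})M_{p,\alpha}(f)^p$.

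The decisive point—and essentially the only place care is needed—is that, because $d_W>1$, the decay $\exp(-c_4 2^{k d_W/(d_W-1)})$ is doubly exponential in $k$ and dominates the single-exponential growth $2^{(k+1)(\alpha d_W p+d_H)}$, so the series over $k\ge0$ converges to a finite constant depending only on $p,\alpha,d_H,d_W$ and the $c_i$. Adding the diagonal and annular contributions then bounds $t^{-\alpha p}\int_X\int_X|f(x)-f(y)|^p p_t(x,y)\,d\mu(x)\,d\mu(y)$ by $C\,M_{p,\alpha}(f)^p$ uniformly in $t$, and taking the supremum over $t$ gives $\|f\|_{p,\alpha}\le C^{1/p}M_{p,\alpha}(f)$. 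I expect the obstacle to be purely bookkeeping in this last estimate: organizing the annular sum and checking its convergence. No analytic input beyond the two-sided kernel bound and the elementary monotonicity $\iint_{A_k}\le\iint_{\Delta_{2^{k+1}r}}$ is required.
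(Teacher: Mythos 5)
The paper does not actually prove this statement: it is quoted verbatim from Pietruska-Pa{\l}uba \cite{P-P10} (Theorem~3.2 there) and the authors explicitly say it is included only as an illustration, so there is no internal proof to compare against. Judged on its own merits, your argument is correct and complete, and it is essentially the standard proof of this equivalence (the one underlying the cited reference): match scales via $t=r^{d_W}$, use the lower kernel bound on $\Delta_r$ to get $M_{p,\alpha}(f)\lesssim \|f\|_{p,\alpha}$, and use the upper bound together with a dyadic annular decomposition for the converse, the point being that the sub-Gaussian factor $\exp\bigl(-c_4 2^{k d_W/(d_W-1)}\bigr)$ decays super-exponentially in $k$ and therefore beats the exponential growth $2^{(k+1)(\alpha d_W p + d_H)}$ of the annular Besov contributions, making the series summable uniformly in $t$.

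Two minor observations. First, you never actually invoke Ahlfors regularity, and indeed your argument does not need it as a separate hypothesis: the volume information is carried entirely by the matched on-diagonal exponent $t^{-d_H/d_W}$ in the two-sided kernel bound (which is consistent, since two-sided sub-Gaussian estimates force Ahlfors regularity of $\mu$). Second, the passage from the paper's asymmetric form $\int_X P_t\bigl(|f-f(y)|^p\bigr)(y)\,d\mu(y)$ to the symmetric double integral against $p_t(x,y)\,d\mu(x)\,d\mu(y)$ is even easier than an appeal to \eqref{heat kernel measure symmetry}: since the integrand $|f(x)-f(y)|^p$ is symmetric, a relabeling of the integration variables suffices. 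Neither point affects the validity of your proof.
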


The further study of the spaces $\mathbf{B}^{p,\alpha}(X)$ on Dirichlet spaces that admit a heat kernel with sub-Gaussian  estimates will be the object of the paper \cite{ABCRST3}. In the present paper, one of the main goals is to study the spaces $\mathbf{B}^{p,\alpha}(X)$ in the framework of a general Dirichlet space $(X,\mu,\mathcal{E},\mathcal{F})$.

\section{Properties of the heat semigroup-based Besov spaces}\label{sec:properties}

In this section we identify and prove some fundamental properties of the Besov spaces given in Section \ref{sec:definition}, including Banach space property, reflexivity, and non-triviality. We also show that the supremum in the definition of Besov spaces can be replaced with limit supremum; this ``locality in time" property is very useful in obtaining local information about Besov functions (in particular, dimensions of boundaries of sets whose characteristic functions are in a Besov class from their norms, see \cite{ABCRST2,ABCRST3,ABCRST4}). We will also prove interpolation inequalities and pseudo-Poincar\'e inequalities. Those pseudo-Poincar\'e inequalities will play a prominent role in the study of Sobolev inequalities, see Section \ref{section sobolev}. Finally, we study the relationship between the Besov spaces and the domain of the fractional powers of the generator of the Dirichlet form.

Throughout the section, let $(X,\mu,\mathcal{E},\mathcal{F})$ be a Dirichlet space and let $\{P_{t}\}_{t\in[0,\infty)}$ denote the associated  Markovian semigroup.

\subsection{Locality in time}\label{locality in time section}

The following ``locality in time" property will be useful in understanding functions of bounded variation, to be studied in the second and third paper \cite{ABCRST2,ABCRST3}. It also underlines the fact that the Besov 
energy seminorm $\|\cdot\|_{p,\alpha}$ is a global object, since in going from the supremum in the Besov norm to 
limit supremum one also picks up the $L^p$-norm. Recall the definition of (heat semigroup-based) Besov classes from
Definition~\ref{def:Besov}.

\begin{lemma}\label{Lemma limsup debut}
Let $p\ge1$ and  $\alpha \ge 0$. Then
\[
\mathbf{B}^{p,\alpha}(X)
=\left\{ f \in L^p(X,\mu)\, :\,  \limsup_{t \to 0} t^{-\alpha} \left( \int_X P_t (|f-f(y)|^p)(y) d\mu(y) \right)^{1/p} <+\infty \right\}.
\]
Moreover, if $\beta > \alpha$, then $\mathbf{B}^{p,\beta}(X) \subset \mathbf{B}^{p,\alpha}(X)$.
Furthermore, for $f \in \mathbf{B}^{p,\alpha}(X)$, and for every $t >0$, we have
\[
\| f \|_{p,\alpha} \le \frac{2}{t^\alpha} \| f \|_{L^p(X,\mu)} +\sup_{s\in (0,t]} s^{-\alpha} \left( \int_X P_s (|f-f(y)|^p)(y) d\mu(y) \right)^{1/p}.
\]
\end{lemma}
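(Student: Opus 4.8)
The plan is to reduce all three assertions to a single uniform bound on the heat-difference quantity
\[
E_t(f) := \left( \int_X P_t\bigl(|f-f(y)|^p\bigr)(y)\, d\mu(y) \right)^{1/p},
\]
namely the claim that $E_t(f) \le 2\,\|f\|_{L^p(X,\mu)}$ for \emph{every} $t>0$. This estimate is the technical heart of the argument.

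To prove it, I would write $E_t(f)^p = \int_X\int_X |f(x)-f(y)|^p\, p_t(y,dx)\, d\mu(y)$ and apply the convexity inequality $|a-b|^p \le 2^{p-1}(|a|^p+|b|^p)$, splitting the double integral into a $|f(y)|^p$-piece and a $|f(x)|^p$-piece. For the first piece, since $p_t(y,\cdot)$ is a probability measure (see~\eqref{heat kernel measure}) the inner integral collapses and one obtains $\|f\|_{L^p(X,\mu)}^p$. For the second piece, I would use the symmetry~\eqref{heat kernel measure symmetry} with $F(x,y)=|f(x)|^p$ to rewrite $\int_X\int_X |f(x)|^p\, p_t(y,dx)\, d\mu(y) = \int_X\int_X |f(x)|^p\, p_t(x,dy)\, d\mu(x)$, and then apply conservativeness once more to get $\|f\|_{L^p(X,\mu)}^p$ again. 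Hence $E_t(f)^p \le 2^{p-1}\cdot 2\,\|f\|_{L^p(X,\mu)}^p$, which is the desired bound.

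Granting the uniform bound, the explicit estimate (the last displayed inequality) follows by splitting the supremum defining $\|f\|_{p,\alpha}$ at the given $t$: on $(0,t]$ one keeps the supremum term unchanged, while on $(t,\infty)$ one uses $s^{-\alpha}\le t^{-\alpha}$ (valid because $\alpha\ge0$) together with $E_s(f)\le 2\|f\|_{L^p(X,\mu)}$ to bound each term by $2\,t^{-\alpha}\|f\|_{L^p(X,\mu)}$; since both partial suprema are nonnegative, their maximum is at most their sum. The limsup characterization then follows immediately: if $\limsup_{t\to0} t^{-\alpha}E_t(f)<\infty$, I would pick $t$ small enough that $\sup_{s\in(0,t]} s^{-\alpha}E_s(f)$ is finite and conclude $\|f\|_{p,\alpha}<\infty$ from the explicit estimate, while the reverse inclusion is trivial since $\limsup_{t\to0}\le\sup_{t>0}$. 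Finally, the inclusion $\mathbf{B}^{p,\beta}(X)\subset\mathbf{B}^{p,\alpha}(X)$ for $\beta>\alpha$ again rests on the uniform bound: for $s\le1$ one writes $s^{-\alpha}E_s(f)=s^{\beta-\alpha}\, s^{-\beta}E_s(f)\le\|f\|_{p,\beta}$, and for $s>1$ one uses $s^{-\alpha}E_s(f)\le 2\|f\|_{L^p(X,\mu)}$.

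The only delicate bookkeeping point occurs in the limsup characterization: finiteness of $\limsup_{t\to0}$ controls $s^{-\alpha}E_s(f)$ only on a punctured neighborhood $(0,\delta)$ of the origin, so I would apply the explicit estimate with $t$ chosen strictly inside $(0,\delta)$ to convert this near-zero control into a genuine supremum bound over all of $(0,\infty)$. No genuine difficulty arises beyond this; the computation establishing $E_t(f)\le 2\|f\|_{L^p(X,\mu)}$—resting on conservativeness and the symmetry of the heat kernel measure—is where the full structure of the semigroup is used.
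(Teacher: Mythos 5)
Your proposal is correct and follows essentially the same route as the paper: the core estimate $E_t(f)\le 2\|f\|_{L^p(X,\mu)}$ via $|a-b|^p\le 2^{p-1}(|a|^p+|b|^p)$, conservativeness, and the symmetry of the heat kernel measure (the paper phrases this last ingredient as $\int_X P_t(|f|^p)\,d\mu\le\int_X|f|^p\,d\mu$, which amounts to the same thing), followed by splitting the supremum at $t$. Your explicit large-$s$ argument for the inclusion $\mathbf{B}^{p,\beta}(X)\subset\mathbf{B}^{p,\alpha}(X)$ is a welcome elaboration of a step the paper labels immediate, but it is not a different method.
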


\begin{proof}
The claim $\mathbf{B}^{p,\beta}(X) \subset \mathbf{B}^{p,\alpha}(X)$ for $\beta>\alpha$ is immediate.

If $f \in \mathbf{B}^{p,\alpha}(X)$, then
\[
\limsup_{t \to 0} t^{-\alpha} \left( \int_X P_t (|f-f(y)|^p)(y) d\mu(y) \right)^{1/p} \le \| f \|_{p,\alpha}.
\]
Conversely, if $\limsup_{t \to 0} t^{-\alpha} \left( \int_X P_t (|f-f(y)|^p)(y) d\mu(y) \right)^{1/p} <+\infty$, then
there is some $\ve >0$ for which
\[
\sup_{t \in (0,\ve]} t^{-\alpha} \left( \int_X P_t (|f-f(y)|^p)(y) d\mu(y) \right)^{1/p} <\infty.
\]
For $t >\ve$, since $|f(x)-f(y)|^p \le 2^{p-1} (|f(x)|^p+|f(y)|^p)$, the semigroup is conservative (and hence
$P_t1(x)=1$ for all $x\in X$) and $\int_XP_t(|f|^p)(x)\, d\mu(x)\le \int_X|f(x)|^p\, d\mu(x)$,
we have
\begin{align*}
 t^{-\alpha} \left( \int_X P_t (|f-f(y)|^p)(y) d\mu(y) \right)^{1/p} \le 2 \ve^{-\alpha} \| f \|_{L^p(X,\mu)}.
\end{align*}
The last inequality stated in the lemma now follows from the above inequality, and we also have that
if $\limsup_{t \to 0} t^{-\alpha} \left( \int_X P_t (|f-f(y)|^p)(y) d\mu(y) \right)^{1/p} <\infty$, then
$f\in \mathbf{B}^{p,\alpha}(X)$. This completes the proof. 
\end{proof}

Interestingly, in a large class of examples of strictly local Dirichlet forms, for $\alpha=1/2$,
\[
\limsup_{t \to 0} t^{-\alpha} \left( \int_X P_t (|f-f(y)|^p)(y) d\mu(y) \right)^{1/p}
\]
is actually a limit that can be explicitly computed:

\begin{proposition}\label{Besov Rn}
Assume that $X$ is a smooth manifold  of dimension $n\ge d$. Let $L=V_0+\sum_{i=1}^d V_i^2$ be a H\"ormander's type operator on $X$, where the $V_i$'s are smooth vector fields. Let us assume that $L$ is essentially self-adjoint on $C_0^\infty(X)$ in $L^2(X,\mu)$ for some Radon measure $\mu$ on $X$. Consider the Dirichlet space $(X,\mu, \mathcal{E},\mathcal{F})$ obtained by closing the pre-Dirichlet form
\[
\mathcal{E}(f,g)=\int_X \Gamma(f,g) d\mu(x), \quad f,g \in C_0^\infty(X),
\]
where $\Gamma(f,g)$ is the carr\'e du champ operator defined by  $\Gamma(f,g)=\frac{1}{2}( L(fg)-fLg -gLf) =\sum_{i=1}^d (V_i f)( V_i g )$.  Assume that the associated semigroup $P_t$ is conservative.
Then, for every $p \ge 1$, $$C_0^\infty (X) \subset \mathbf{B}^{p,1/2}(X)$$ and one has for every $f \in C_0^\infty (X) $, and open set $A \subset X$,
\[
\lim_{t \to 0} t^{-1/2} \left( \int_A P_t (|f-f(x)|^p)(x) d\mu(x) \right)^{1/p} = 2\left( \frac{ \Gamma \left( \frac{1+p}{2} \right) }{\sqrt
{\pi}}\right)^{1/p} \left( \int_A \Gamma ( f,f) (x)^{p/2} d\mu(x)\right)^{1/p},
\]
where $\Gamma \left( \frac{1+p}{2} \right) $ denotes the Euler's gamma function.
\end{proposition}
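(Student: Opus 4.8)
The plan is to reduce the statement to a \emph{pointwise} short-time asymptotic and then integrate it over $A$ by dominated convergence. Concretely, I would prove that for $\mu$-a.e.\ $x$,
\[
\lim_{t\to 0} t^{-p/2}\, P_t(|f-f(x)|^p)(x) = 2^p\,\frac{\Gamma\!\left(\tfrac{1+p}{2}\right)}{\sqrt{\pi}}\,\Gamma(f,f)(x)^{p/2},
\]
from which the displayed identity follows by raising to the power $1/p$ and integrating. The basic tool is the probabilistic representation $P_t g(x)=\mathbb{E}^x[g(X_t)]$, where $(X_t)_{t\ge0}$ is the diffusion generated by $L$. Since $L=V_0+\sum_{i=1}^d V_i^2$ and $P_t$ is assumed conservative, $(X_t)$ is well defined for all time and solves the Stratonovich SDE $dX_t=V_0(X_t)\,dt+\sqrt{2}\,\sum_{i=1}^d V_i(X_t)\circ dB_t^i$; the normalization factor $\sqrt{2}$ forced by the generator is exactly what will produce the constant $2$ (rather than $\sqrt2$) in the final formula.

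The heart of the argument is the short-time expansion of the increment. Applying the It\^o--Stratonovich formula to $f\in C_0^\infty(X)$ gives
\[
f(X_t)-f(x)=\sqrt{2}\,\sum_{i=1}^d\int_0^t (V_i f)(X_s)\,dB_s^i+\int_0^t (Lf)(X_s)\,ds .
\]
The drift term is $O(t)$, while the martingale term, divided by $\sqrt t$, converges in $L^p$ to $\sqrt{2}\,\sum_i (V_if)(x)\,G_i$ with $(G_i)$ i.i.d.\ standard Gaussian; the error incurred in replacing $(V_if)(X_s)$ by $(V_if)(x)$ is controlled using the small-time moment bound $\mathbb{E}^x[d(x,X_s)^q]=O(s^{q/2})$ (with $d$ the intrinsic distance) together with the smoothness and compact support of $f$. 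Since $\sqrt{2}\,\sum_i (V_if)(x)\,G_i$ is centered Gaussian of variance $2\,\Gamma(f,f)(x)$, convergence of moments yields
\[
t^{-p/2}\,P_t(|f-f(x)|^p)(x)\longrightarrow \bigl(2\,\Gamma(f,f)(x)\bigr)^{p/2}\,\mathbb{E}[|Z|^p],\qquad Z\sim N(0,1),
\]
and the classical Gaussian absolute-moment identity $\mathbb{E}[|Z|^p]=\tfrac{2^{p/2}}{\sqrt\pi}\,\Gamma(\tfrac{1+p}{2})$ gives precisely the constant above, whose $p$-th root is $2\,(\Gamma(\tfrac{1+p}{2})/\sqrt\pi)^{1/p}$.

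To pass the limit under $\int_A$ I would split $X$ into a neighborhood $U$ of $\operatorname{supp} f$ and its complement. On $\overline U$ (compact) the Lipschitz bound $|f(x)-f(y)|\le\|\Gamma(f,f)\|_\infty^{1/2}\,d(x,y)$ and the moment bound give $t^{-p/2}P_t(|f-f(x)|^p)(x)\le C$ uniformly for small $t$, so dominated convergence applies there. For $x$ far from $\operatorname{supp} f$ one has $f(x)=0$ and $|f(x)-f(y)|^p\le\|f\|_\infty^p\,\mathbf 1_{\operatorname{supp} f}(y)$, whence $P_t(|f-f(x)|^p)(x)\le\|f\|_\infty^p\,\mathbb{P}^x(X_t\in\operatorname{supp} f)$; the standard small-time off-diagonal Gaussian upper bound for $p_t$ bounds $\sup_{t\in(0,t_0]}t^{-p/2}\mathbb{P}^x(X_t\in\operatorname{supp} f)$ by a $\mu$-integrable quantity decaying like $\exp(-c\,d(x,\operatorname{supp} f)^2)$, while the pointwise limit there is $0=\Gamma(f,f)(x)^{p/2}$. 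Dominated convergence then gives the stated limit for every open $A$, and taking $A=X$ shows $\limsup_{t\to0}t^{-1/2}\bigl(\int_X P_t(|f-f(x)|^p)\,d\mu\bigr)^{1/p}<\infty$, so $f\in\mathbf B^{p,1/2}(X)$ by Lemma~\ref{Lemma limsup debut}. The main obstacles I expect are (i) the uniform-in-$t$ domination away from $\operatorname{supp} f$, which requires off-diagonal small-time Gaussian estimates for the possibly degenerate operator $L$, and (ii) upgrading the rescaled increment's convergence to genuine $L^p$ moment convergence; once these are secured the Gaussian moment computation is routine.
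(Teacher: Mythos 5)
Your proposal is correct and follows essentially the same route as the paper's proof: the probabilistic representation via the Stratonovich SDE $dB_t^x=V_0(B_t^x)\,dt+\sqrt{2}\sum_i V_i(B_t^x)\circ d\beta_t^i$, the decomposition of $f(B_t^x)-f(x)$ into a martingale plus an $O(t)$ drift, the convergence in every $L^p$ of the rescaled martingale to a centered Gaussian of variance $2\,\Gamma(f,f)(x)$, and the Gaussian absolute-moment identity producing the constant $2\left(\Gamma\left(\frac{1+p}{2}\right)/\sqrt{\pi}\right)^{1/p}$. The only divergence is in the interchange of limit and integral, which the paper compresses into ``since $f$ has compact support, one deduces\dots'': your domination away from $\operatorname{supp} f$ via pointwise off-diagonal Gaussian heat-kernel bounds and $\mu$-integrability of $\exp\left(-c\,d(x,\operatorname{supp} f)^2\right)$ imports hypotheses (H\"ormander bracket condition, volume growth control) not present in the statement, whereas the symmetry identity $\int_{X\setminus U}P_t(|f|^p)\,d\mu=\int_X |f|^p\,P_t\mathbf{1}_{X\setminus U}\,d\mu$ combined with a small-time exit-probability estimate for starting points in $\operatorname{supp} f$ handles that region with no extra assumptions.
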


We will prove this proposition below, after discussing its consequences.

\begin{remark}
In the previous setting, one therefore has
\begin{equation}\label{inequality pre}
\left( \int_X \Gamma ( f,f) (x)^{p/2} d\mu(x)\right)^{1/p} \le C_p \| f \|_{p,1/2}
\end{equation}
 for $f \in C_0^\infty(X)$. Moreover, if $P_t$ satisfies the Bakry-\'Emery estimate $\sqrt{\Gamma(P_t f)} \le C P_t \sqrt{\Gamma( f)} $, we will 
see in~\cite[Section 4.5]{ABCRST2}
that the converse inequality to~\eqref{inequality pre} holds for $p=1$, which takes the form
\[
\| f \|_{1,1/2} \le c \left( \int_X \Gamma ( f,f) (x)^{1/2} d\mu(x)\right).
\]
\end{remark}

\begin{remark}
Proposition \ref{Besov Rn} indicates that at a high level of generality, one may  expect the  Besov spaces $ \mathbf{B}^{p, 1/2}(X)$, $1 \le p <+\infty$  to be closely related to the various notions of Sobolev spaces that have been defined on metric measure spaces 
{\rm(}see for instance \cite{Shan2000}{\rm)}. While in this paper we shall only be concerned with the study of all the Besov  spaces $\mathbf{B}^{p,\alpha}(X)$, the comparison between Sobolev spaces and Besov spaces will be made in  \cite[Section 7]{ABCRST2}  in the framework of Dirichlet spaces with absolutely continuous energy measures. In the framework of \cite{ABCRST3},  it will be interesting to compare our results with the recent preprint \cite{HinzKochMeinert} on Sobolev spaces and calculus of variations on fractals. Such a comparison will be the subject of future study.
\end{remark}

\begin{example}
If $X=\mathbb{R}^n$ and $\mathcal{E}$ is the standard Dirichlet form on $\mathbb{R}^n$, it is natural to expect that for every $p \ge 1$, and every $f \in \mathbf{B}^{p,1/2}(X)$
\[
 \lim_{t \to 0} t^{-1/2} \left( \int_{\mathbb{R}^n} P_t (|f-f(x)|^p)(x) dx \right)^{1/p}=2\left( \frac{ \Gamma \left( \frac{1+p}{2} \right) }{\sqrt
{\pi}}\right)^{1/p} \left( \int_{\mathbb{R}^n} | \nabla f (x)|^p dx\right)^{1/p}.
\]
The case $p=1$ is proved in \cite{MPPP}, but we did not find it in the literature for $p>1$, $p \neq 2$, though it seems to be closely related to  \cite{BBM} . 
\end{example}

\begin{proof} [\textbf{Proof of Proposition \ref{Besov Rn}}]
We use here a probabilistic argument. For $x \in X$, we denote by $(B_t^x)_{t \ge 0}$ the $L$-Brownian motion on $X$ started from $x$, that is the diffusion with generator $L$. It can be constructed as the solution of a stochastic differential equation in Stratonovich form:
\[
dB_t^x=V_0(B_t^x)dt +\sqrt{2}\sum_{i=1}^d V_i(B_t^x)\circ d\beta^i_t
\]
where $\beta$ is a $d$-dimensional Brownian motion. 
Let $f \in C_0^\infty(X)$. The process
\[
M_t^f =f(B_t^x) -f(x) -\int_0^t L f(B_s^x) ds
\]
is a square integrable martingale that can be written
\[
M_t^f=\sqrt{2} \sum_{i=1}^d \int_0^t (V_i f)(B_s^x)d\beta^i_s.
\]
We have then
\begin{align*}
P_t (|f-f(x)|^p)(x) & =\mathbb{E} \left( | f(B_t^x) -f(x) |^p\right) \\
 &=\mathbb{E} \left( \left| M_t^f +\int_0^t L f(B_s^x) ds \right|^p \right).
\end{align*}
Observe now that $\frac{1}{\sqrt{t}} \int_0^t L f(B_s^x) ds$ almost surely converges to 0 when $t \to 0$. Note also that 
$\frac{1}{\sqrt{t}} M_t^f$ converges in all $L^p$'s to the Gaussian random variable 
$\sqrt{2} \sum_{i=1}^d (V_i f) (x) \beta^i_1$. Since $f$ has a compact support, one deduces that
\[
\lim_{t \to 0} t^{-1/2} \left( \int_A P_t (|f-f(x)|^p)(x) d\mu(x) \right)^{1/p} = C_p \left( \int_A \Gamma ( f,f) (x)^{p/2} d\mu(x)\right)^{1/p},
\]
with $C_p=\sqrt{2} \mathbb{E}(| N|^p)^{1/p}=2\left( \frac{ \Gamma \left( \frac{1+p}{2} \right) }{\sqrt
{\pi}}\right)^{1/p} $, where $N$ denotes a Gaussian random variable with mean 0 and variance 1.
\end{proof}

%

\subsection{$\mathbf{B}^{2,1/2}(X)=\mathcal{F}$ and non-triviality of some of the spaces $\mathbf{B}^{p,\alpha}(X)$}

We prove that the Besov space $\mathbf{B}^{2,1/2}(X)$ is exactly the domain $\mathcal{F}$ of the Dirichlet form.  It follows that $\mathbf{B}^{2,1/2}(X)$ is dense in $L^2(X,\mu)$. 

\begin{proposition}\label{prop:energyasbesov}
We have \/ $\mathbf{B}^{2,1/2}(X)=\mathcal{F}$. Moreover, for every $f \in \mathcal{F}$,  $2\DF(f,f)=\|f\|_{2,1/2}^2$.
\end{proposition}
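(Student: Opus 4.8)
The plan is to compute the quantity inside the $p=2$, $\alpha=1/2$ Besov seminorm explicitly and identify it, up to a constant factor, with the expression $\frac1t\langle(I-P_t)f,f\rangle$ appearing in Lemma~\ref{lem:Dirich-from-Pt}; the proposition then falls out of that lemma. Throughout I fix $f\in L^2(X,\mu)$ and $t>0$.

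First I would rewrite the inner integral using the probability kernel representation~\eqref{heat kernel measure} and expand the square $|f(x)-f(y)|^2=f(x)^2-2f(x)f(y)+f(y)^2$. Integrating termwise is legitimate because for $f\in L^2$ each of the three resulting pieces is finite: $\int_X P_t(f^2)\,d\mu=\|f\|_{L^2}^2$ (by symmetry~\eqref{symmetry P} applied with $g\equiv1$ together with conservativeness $P_t1=1$), $\langle f,P_tf\rangle$ is finite by the $L^2$-contractivity of $P_t$ and Cauchy--Schwarz, and $\int_X f(y)^2\,d\mu=\|f\|_{L^2}^2$. Using conservativeness once more in the form that $p_t(y,\cdot)$ is a probability measure (so $\int_X f(y)^2\,p_t(y,dx)=f(y)^2$), the three terms collapse to
\[
\int_X P_t(|f-f(y)|^2)(y)\,d\mu(y)=2\|f\|_{L^2}^2-2\langle f,P_tf\rangle=2\langle(I-P_t)f,f\rangle .
\]

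Dividing by $t$ gives $t^{-1}\int_X P_t(|f-f(y)|^2)(y)\,d\mu(y)=\tfrac{2}{t}\langle(I-P_t)f,f\rangle$ for every $t>0$. By Lemma~\ref{lem:Dirich-from-Pt} the map $t\mapsto\tfrac1t\langle(I-P_t)f,f\rangle$ is nonincreasing, so its supremum over $t>0$ coincides with its limit as $t\to0^+$. Hence
\[
\|f\|_{2,1/2}^2=\sup_{t>0}\frac{2}{t}\langle(I-P_t)f,f\rangle=2\lim_{t\to0^+}\frac1t\langle(I-P_t)f,f\rangle .
\]
The same lemma asserts that this limit is finite exactly when $f\in\mathcal F$, in which case it equals $\mathcal E(f,f)$, and is $+\infty$ otherwise. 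Therefore $\|f\|_{2,1/2}<\infty$ if and only if $f\in\mathcal F$, which together with $\mathcal F\subset L^2(X,\mu)$ yields $\mathbf B^{2,1/2}(X)=\mathcal F$, and on this common set $\|f\|_{2,1/2}^2=2\mathcal E(f,f)$.

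I do not expect a serious obstacle. The only points requiring care are the termwise integration and the two invocations of conservativeness and symmetry, all valid for $f\in L^2$ precisely because the individual integrals are finite. The genuine content is entirely contained in Lemma~\ref{lem:Dirich-from-Pt}: its monotonicity is what converts the supremum defining the Besov seminorm into the $t\to0^+$ limit that defines the Dirichlet energy.
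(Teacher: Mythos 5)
Your proof is correct and follows essentially the same route as the paper: expand $|f(x)-f(y)|^2$, use conservativeness and the symmetry of $P_t$ to identify $\int_X P_t(|f-f(y)|^2)(y)\,d\mu(y)$ with $2\langle(I-P_t)f,f\rangle$, and then invoke the monotonicity and limit characterization of Lemma~\ref{lem:Dirich-from-Pt} to convert the supremum into the $t\to0^+$ limit defining $\mathcal{E}(f,f)$. Your added care about termwise integration is a minor refinement of the paper's argument, not a different approach.
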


\begin{proof}
Let $f \in L^2(X,\mu)$. Note that as $P_t$ is linear and fixes constant functions (by its conservativeness), we have for $t>0$  that
\[
P_t(|f-f(y)|^2)(y)=P_t(f^2)(y)+f(y)^2-2f(y)P_t(f)(y).
\]
Therefore,
\begin{align*}
 \frac1{2t} \int_X P_t(|f-f(y)|^2)(y)\, d\mu(y)&=\frac1{2t} \int_X \left( P_t(f^2)(y)+f(y)^2-2f(y)P_t(f)(y)\right)\, d\mu(y).
 \end{align*}
 Now using the symmetry \eqref{symmetry P} and the conservativeness of $\{P_t\}_{t\in [0,\infty)}$, we have
 \[
 \int_X  P_t(f^2)(y)\, d\mu(y)=\int_X  (P_t 1)(y)f^2(y)\, d\mu(y)=\int_X f^2(y)\, d\mu(y).
 \]
 Therefore,
\begin{align}\label{prop:energyasbesoveq}
 \frac1{2t} \int_X P_t(|f-f(y)|^2)(y)\, d\mu(y)&=\frac1{t} \int_X \left( f(y)^2-f(y)P_t(f)(y)\right)\, d\mu(y) \notag \\
  & = \frac1t \langle (I-P_t)f,f\rangle.
 \end{align}
From Lemma~\ref{lem:Dirich-from-Pt} above, one sees
that the right side of~\eqref{prop:energyasbesoveq} is positive and decreasing as $t$ increases, and has limit 
$\DF(f,f)$ as $t\downarrow0$ if and only if $f\in\mathcal{F}$. From this, we know that the limit $t\to0^+$ of the left-hand side term above
is the supremum, and the claim follows.
\end{proof} 

\begin{proposition}\label{prop:convexityembed}
If $1\leq q\leq p < \infty$ and  $f\in \mathbf{B}^{p,\alpha}(X)$, then $|f|^{p/q}\in \mathbf{B}^{q,\alpha}(X)$ and
\begin{equation}\label{eqn:convexitybound}
\| |f|^{p/q}\|_{q,\alpha} \leq 2^{1/q}\brak{\frac{p}{q}}  \|f\|_{L^p(X,\mu)}^{(p/q)-1} \|f \|_{p,\alpha}.
\end{equation}
\end{proposition}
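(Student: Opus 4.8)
The plan is to reduce the whole statement to a pointwise convexity estimate, which I then integrate against the heat kernel measure using H\"older's inequality. The case $q=p$ is immediate, since $|f|^{p/q}=|f|$ is a normal contraction of $f$ and Remark~\ref{rem-normalcontruction} applies, so I would assume $q<p$. First I would record the elementary inequality: for $r:=p/q\ge 1$ and $a,b\ge 0$ one has $|a^r-b^r|\le r\max(a,b)^{r-1}|a-b|$, which comes from the mean value theorem applied to $t\mapsto t^r$ together with $r-1\ge 0$. Applying this with $a=|f(x)|$, $b=|f(y)|$ and combining with the $1$-Lipschitz bound $\bigl||f(x)|-|f(y)|\bigr|\le |f(x)-f(y)|$ yields, pointwise,
\[
\Bigl| |f(x)|^{p/q}-|f(y)|^{p/q}\Bigr|^{q}\le \Bigl(\tfrac pq\Bigr)^{q}\max(|f(x)|,|f(y)|)^{p-q}\,|f(x)-f(y)|^{q}.
\]

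Next, since $p-q\ge 0$, I would use $\max(|f(x)|,|f(y)|)^{p-q}\le |f(x)|^{p-q}+|f(y)|^{p-q}$ and split the integral $\int_X\int_X(\cdots)\,p_t(y,dx)\,d\mu(y)$ into two pieces $I_1$ (weight $|f(x)|^{p-q}$) and $I_2$ (weight $|f(y)|^{p-q}$). To each I would apply H\"older's inequality on the product measure $p_t(y,dx)\,d\mu(y)$ with conjugate exponents $\frac{p}{p-q}$ and $\frac pq$, writing the weight as $(|f|^p)^{(p-q)/p}$ and $|f(x)-f(y)|^q=(|f(x)-f(y)|^p)^{q/p}$. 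In both $I_1$ and $I_2$ the second factor is exactly $\bigl(\int_X P_t(|f-f(y)|^p)(y)\,d\mu(y)\bigr)^{q/p}$, the $p$-Besov integrand of $f$.

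The first factors collapse to $\|f\|_{L^p(X,\mu)}^{p}$: for $I_2$ one has $\int_X\int_X|f(y)|^p\,p_t(y,dx)\,d\mu(y)=\int_X|f(y)|^p\,d\mu(y)$ directly, since $p_t(y,\cdot)$ is a probability measure by conservativeness; for $I_1$ the same value is reached after applying the symmetry relation~\eqref{heat kernel measure symmetry} (equivalently, $\int_X P_t(|f|^p)\,d\mu=\int_X|f|^p\,d\mu$). Hence, writing $A:=\int_X P_t(|f-f(y)|^p)(y)\,d\mu(y)$, I obtain $I_1+I_2\le 2\,\|f\|_{L^p(X,\mu)}^{p-q}A^{q/p}$ and therefore
\[
\int_X P_t\bigl(\bigl||f|^{p/q}-|f(y)|^{p/q}\bigr|^{q}\bigr)(y)\,d\mu(y)\le 2\Bigl(\tfrac pq\Bigr)^{q}\|f\|_{L^p(X,\mu)}^{p-q}\,A^{q/p}.
\]
Finally I would raise both sides to the power $1/q$, multiply by $t^{-\alpha}$, recognize the remaining factor as $t^{-\alpha}A^{1/p}$ (using $\frac1q\cdot\frac qp=\frac1p$), and take the supremum over $t>0$; this produces exactly~\eqref{eqn:convexitybound}. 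That $|f|^{p/q}\in L^q(X,\mu)$ is clear from $f\in L^p(X,\mu)$, so $|f|^{p/q}\in\mathbf{B}^{q,\alpha}(X)$.

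I expect the only delicate point to be the bookkeeping of exponents so as to land the \emph{sharp} constant $2^{1/q}$ rather than a cruder one: this hinges on using the bound $\max(a,b)^{p-q}\le a^{p-q}+b^{p-q}$ directly, instead of the convexity inequality $(a+b)^q\le 2^{q-1}(a^q+b^q)$, which would cost an additional factor. The interchange of the $\mu$-integral with the kernel integral needed to invoke symmetry and conservativeness is routine given the non-negativity and measurability framework of Section~\ref{sec2}.
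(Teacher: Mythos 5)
Your proof is correct and follows essentially the same route as the paper's: the same elementary inequality $|a^{p/q}-b^{p/q}|^q\le (p/q)^q\max\{a,b\}^{p-q}|a-b|^q$, the same splitting of $\max\{\cdot,\cdot\}^{p-q}$ into a sum, and the same use of the symmetry \eqref{heat kernel measure symmetry}, conservativeness, and H\"older to produce the constant $2^{1/q}(p/q)$. The only (immaterial) difference is bookkeeping: you apply H\"older once on the product measure $p_t(y,dx)\,d\mu(y)$ to each of $I_1,I_2$, whereas the paper first merges the two terms via symmetry and then applies H\"older sequentially (inner integral against the probability kernel, then the outer $\mu$-integral), yielding the identical bound.
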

\begin{proof} 
We need only prove the seminorm estimate, as the norm estimate then follows trivially from H\"older's inequality. 
We use  for any $a,b>0$ such that $a\neq b$, the elementary inequality
\[
\frac{|a^{p/q}-b^{p/q}|}{|a-b|}\leq \frac{p}{q} \max\{a,b\}^{\frac{p}{q}-1}.
\]
Equivalently, 
\[
|a^{p/q}-b^{p/q}|^q \leq \brak{\frac{p}{q}}^q \max\{a,b\}^{p-q} |a-b|^{q}.
\]
Using this elementary inequality, one has
 \begin{align}\label{espn:ine3}
	 & \lefteqn{\int_X \int_X \bigl| |f(x)|^{p/q}-|f(y)|^{p/q}\bigr|^q \,p_t(y,dx)\,d\mu(y) } \quad& \notag \\
	&\leq \brak{\frac{p}{q}}^q \int_X \int_X  \bigl( |f(x)|^{p-q}+|f(y)|^{p-q}\bigr)\bigl| |f(x)|-|f(y)|\bigr|^q \,p_t(y,dx)\,d\mu(y)  \notag \\
	&\leq \brak{\frac{p}{q}}^q \int_X \int_X  \bigl( |f(x)|^{p-q}+|f(y)|^{p-q}\bigr)\bigl| f(x)-f(y)\bigr|^q \,p_t(y,dx)\,d\mu(y).
\end{align}
We now observe that thanks to 	 \eqref{heat kernel measure symmetry},
\begin{align*}
  \int_X \int_X |f(x)|^{p-q} \bigl| f(x)-f(y)\bigr|^q\, p_t(y,dx) \,d\mu(y)=\int_X \int_X   |f(x)|^{p-q}   \bigl| f(x)-f(y)\bigr|^q\, p_t(x,dy) \,d\mu(x). 
\end{align*}
On the other hand,
\begin{align*}
  \int_X \int_X |f(y)|^{p-q} \bigl| f(x)-f(y)\bigr|^q \,p_t(y,dx) \,d\mu(y)=\int_X \int_X   |f(x)|^{p-q}   \bigl| f(x)-f(y)\bigr|^q \,p_t(x,dy) \,d\mu(x). 
\end{align*}
 Thus, applying H\"older's inequality and then \eqref{heat kernel measure}  to~\eqref{espn:ine3}, we have
 \begin{align*}	
  & \lefteqn{\int_X \int_X \bigl| |f(x)|^{p/q}-|f(y)|^{p/q}\bigr|^q \,p_t(y,dx)\,d\mu(y) } \quad&\\
	&\leq 2\brak{\frac{p}{q}}^q \int_X  |f(x)|^{p-q} \left( \int_X \bigl| f(x)-f(y)\bigr|^q \,p_t(x,dy)\right) \,d\mu(x) \\
	&\leq 2\brak{\frac{p}{q}}^q \int_X |f(x)|^{p-q} \Bigl(\int_X  \bigl| f(x)-f(y)\bigr|^p \,p_t(x,dy)\Bigl)^{q/p} \, d\mu(x)\\
	&\leq 2\brak{\frac{p}{q}}^q \|f\|_{L^p(X,\mu)}^{p-q} \Bigl( \int_X \int_X  \bigl| f(x)-f(y)\bigr|^p \,p_t(x,dy)\, d\mu(x) \Bigl)^{q/p}\\
	&\leq 2 \brak{\frac{p}{q}}^q \|f\|_{L^p(X,\mu)}^{p-q} \left( \int_X P_t (|f-f(x)|^p)(x) d\mu(x)\right)^{q/p} \\
	&\leq 2\brak{\frac{p}{q}}^q \|f\|_{L^p(X,\mu)}^{p-q} t^{\alpha q} \| f\|_{p,\alpha}^q,
	\end{align*}
%
which implies~\eqref{eqn:convexitybound}.
\end{proof}

The following is an immediate consequence of Propositions~\ref{prop:energyasbesov} and~\ref{prop:convexityembed}.
\begin{corollary}\label{cor:densitybelow2}
If $1\leq q\leq p < \infty$ and $\mathbf{B}^{p,\alpha}(X)$ is dense in $L^p(X,\mu)$, then $ \mathbf{B}^{q,\alpha}(X)$ is dense in $L^q(X,\mu)$.  Hence $\mathbf{B}^{p,1/2}(X)$ is dense in $L^p(X,\mu)$ for $1\leq p\leq2$.
\end{corollary}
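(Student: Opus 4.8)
The plan is to deduce both assertions from the convexity embedding of Proposition~\ref{prop:convexityembed}, the identification $\mathbf{B}^{2,1/2}(X)=\mathcal{F}$ of Proposition~\ref{prop:energyasbesov}, and the (standard) continuity of the power map $h\mapsto |h|^{p/q}$ as an operator $L^p(X,\mu)\to L^q(X,\mu)$. Since density in $L^q$ is the goal, Proposition~\ref{prop:convexityembed} will be used only to guarantee membership in $\mathbf{B}^{q,\alpha}(X)$, while the continuity supplies the actual $L^q$-convergence.

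First I would record the continuity statement. Writing $r:=p/q\ge 1$, the elementary inequality $\bigl||a|^{r}-|b|^{r}\bigr|\le r\,\max(|a|,|b|)^{r-1}|a-b|$ combined with H\"older's inequality (the relevant exponents arise from $(r-1)q+q=p$) yields
\[
\bigl\| |f_n|^{p/q}-|f|^{p/q}\bigr\|_{L^q(X,\mu)}^{q} \le \Bigl(\frac{p}{q}\Bigr)^{q}\,\bigl\|\,|f_n|+|f|\,\bigr\|_{L^p(X,\mu)}^{\,p-q}\,\|f_n-f\|_{L^p(X,\mu)}^{\,q}.
\]
Consequently $f_n\to f$ in $L^p(X,\mu)$ forces $|f_n|^{p/q}\to |f|^{p/q}$ in $L^q(X,\mu)$ (the case $p=q$ being trivial). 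This computation is essentially the same one appearing in the proof of Proposition~\ref{prop:convexityembed}.

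For the main implication, assume $\mathbf{B}^{p,\alpha}(X)$ is dense in $L^p(X,\mu)$ and fix $g\in L^q(X,\mu)$. Because $\mathbf{B}^{q,\alpha}(X)$ is a vector space and $g=g^+-g^-$, it suffices to approximate a \emph{nonnegative} $g\in L^q(X,\mu)$ in $L^q$-norm; this splitting into positive and negative parts is exactly what lets me sidestep the sign of the power substitution, which is the one genuinely delicate point (the signed power is not a normal contraction of its absolute value). Given such a $g\ge 0$, set $f:=g^{q/p}$, which lies in $L^p(X,\mu)$ since $\int_X f^p\,d\mu=\int_X g^q\,d\mu<\infty$. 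By density, choose $f_n\in \mathbf{B}^{p,\alpha}(X)$ with $f_n\to f$ in $L^p(X,\mu)$. Proposition~\ref{prop:convexityembed} then gives $|f_n|^{p/q}\in \mathbf{B}^{q,\alpha}(X)$, while the continuity recorded above yields $|f_n|^{p/q}\to |f|^{p/q}=f^{p/q}=g$ in $L^q(X,\mu)$. Hence $g$ is an $L^q$-limit of elements of $\mathbf{B}^{q,\alpha}(X)$, establishing density.

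Finally, the displayed consequence follows by specialization. Proposition~\ref{prop:energyasbesov} identifies $\mathbf{B}^{2,1/2}(X)$ with $\mathcal{F}$, which is dense in $L^2(X,\mu)$ because $(\mathcal{E},\mathcal{F})$ is densely defined. Applying the implication just proved with $p=2$, $\alpha=\tfrac12$, and an arbitrary $1\le q\le 2$ shows that $\mathbf{B}^{q,1/2}(X)$ is dense in $L^q(X,\mu)$, which after relabeling is the asserted statement. The only real obstacle is the sign bookkeeping in the power substitution, which the positive/negative part decomposition resolves; the remainder is routine once the Nemytskii-type continuity of $h\mapsto|h|^{p/q}$ is in hand.
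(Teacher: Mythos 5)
Your proof is correct and follows exactly the route the paper intends: the statement is presented there as an immediate consequence of Propositions~\ref{prop:energyasbesov} and~\ref{prop:convexityembed}, and your argument simply fleshes out that deduction (the power substitution $g\mapsto g^{q/p}$, membership in $\mathbf{B}^{q,\alpha}(X)$ via Proposition~\ref{prop:convexityembed}, $L^p\to L^q$ continuity of $h\mapsto|h|^{p/q}$, and the positive/negative part decomposition to handle signs). No gaps; the details you supply are precisely the routine ones the paper leaves implicit.
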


We note that when the measure is finite a stronger statement is true:

\begin{proposition}
Let us assume that $\mu(X)<\infty$. Then $p\geq q$ implies $\mathbf{B}^{p,\alpha}(X)\subset \mathbf{B}^{q,\alpha}(X)$ and
\[
\| f \|_{q,\alpha} \le \mu(X)^{1/q-1/p} \| f \|_{p,\alpha}.
\]
\end{proposition}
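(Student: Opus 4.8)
The plan is to fix $t>0$, recognize the inner integral $\int_X P_t(|f-f(y)|^q)(y)\,d\mu(y)$ as integration of the function $(x,y)\mapsto|f(x)-f(y)|^q$ against a finite measure on $X\times X$, compare the relevant $L^q$ and $L^p$ norms with respect to that measure by a single application of H\"older's inequality, and then take the supremum over $t>0$. The whole estimate is uniform in $t$, so no interplay with the $t^{-\alpha}$ factor is needed beyond pulling it through at the end.

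First I would introduce, for each fixed $t>0$, the measure $d\nu_t(x,y):=p_t(y,dx)\,d\mu(y)$ on $X\times X$ and compute its total mass. Since $\{P_t\}_{t\in[0,\infty)}$ is conservative, $P_t1=1$, whence
\[
\nu_t(X\times X)=\int_X\int_X p_t(y,dx)\,d\mu(y)=\int_X (P_t1)(y)\,d\mu(y)=\mu(X)<\infty.
\]
Thus $\nu_t$ is a finite measure whose total mass equals $\mu(X)$ for every $t$. Writing $g(x,y):=|f(x)-f(y)|$, the quantity appearing in the seminorm is exactly $\int_{X\times X} g^r\,d\nu_t=\int_X P_t(|f-f(y)|^r)(y)\,d\mu(y)$ for $r=p$ and $r=q$.

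Next, assuming $q<p$ (the case $q=p$ being trivial), I would apply H\"older's inequality on $(X\times X,\nu_t)$ with conjugate exponents $p/q$ and $p/(p-q)$, using $g^q$ against the constant function $1$:
\[
\int_{X\times X} g^q\,d\nu_t\le\left(\int_{X\times X} g^p\,d\nu_t\right)^{q/p}\nu_t(X\times X)^{1-q/p}.
\]
Raising to the power $1/q$ and substituting $\nu_t(X\times X)=\mu(X)$ gives, since $(1-q/p)/q=1/q-1/p$,
\[
\left(\int_X P_t(|f-f(y)|^q)(y)\,d\mu(y)\right)^{1/q}\le\mu(X)^{1/q-1/p}\left(\int_X P_t(|f-f(y)|^p)(y)\,d\mu(y)\right)^{1/p}.
\]

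Finally I would multiply both sides by $t^{-\alpha}$ and take the supremum over $t>0$, obtaining $\|f\|_{q,\alpha}\le\mu(X)^{1/q-1/p}\|f\|_{p,\alpha}$; combined with the standard inclusion $L^p(X,\mu)\subset L^q(X,\mu)$ valid for finite $\mu$, this yields $\mathbf{B}^{p,\alpha}(X)\subset\mathbf{B}^{q,\alpha}(X)$. There is no genuine obstacle in this argument: the only step requiring care is the identification of the double integral as integration against a finite measure of total mass \emph{exactly} $\mu(X)$, which is precisely the point at which conservativeness is used; once this is observed, the bound follows from one application of H\"older's inequality that is uniform in $t$.
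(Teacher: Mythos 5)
Your proof is correct and is in substance the same as the paper's: the paper reaches the identical estimate by first applying the Jensen-type Lemma~\ref{lem:Pt-Holder} pointwise in $y$ (giving $P_t(|f-f(y)|^q)(y)\le \bigl(P_t(|f-f(y)|^p)(y)\bigr)^{q/p}$, which is where conservativeness enters) and then H\"older's inequality on the finite measure space $(X,\mu)$, which is exactly your single H\"older application on the product measure $p_t(y,dx)\,d\mu(y)$ factored into its fiber and base parts. Both arguments use conservativeness in the same way (fixing the total mass to be $\mu(X)$) and produce the identical constant $\mu(X)^{1/q-1/p}$.
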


\begin{proof}
Let $ f \in \mathbf{B}^{p,\alpha}(X)$. From Lemma \ref{lem:Pt-Holder} and H\"older's inequality, one has 
\begin{align*}
\int_X P_t(|f-f(y)|^q) d\mu(y)  \leq \int_X P_t(|f-f(y)|^{p})^{q/p} d\mu(y) \leq (\mu(X))^{1-q/p} \left( \int_X P_t(|f-f(y)|^{p}) d\mu(y)\right)^{q/p}.
\end{align*}
\end{proof}


\subsection{Triviality of some of the spaces $\mathbf{B}^{p,\alpha}(X)$}

As we have seen, the space $\mathbf{B}^{2,1/2}(X)$ is dense in $L^2(X,\mu)$ since it is the domain 
$\mathcal{F}$ of $\mathcal{E}$ which is dense in $L^2(X)$. For other values of the parameters, it turns out that some of the spaces 
$\mathbf{B}^{p,\alpha}(X)$ are in general trivial.

\begin{proposition}\label{prop:BesovCEp}
Suppose that for all $f\in\mathcal{F}$ we have that $f$ is constant whenever $\mathcal{E}(f,f)=0$. 
Then, any $f \in \mathbf{B}^{p,\alpha }(X)$ with $1 \le p \le 2$ and $\alpha >1/p$ is constant.
\end{proposition}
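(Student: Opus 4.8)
The plan is to reduce everything to the case $p=2$, where Proposition~\ref{prop:energyasbesov} and Lemma~\ref{lem:Dirich-from-Pt} give direct access to the energy $\mathcal{E}(f,f)$, and then to invoke the hypothesis that vanishing energy forces constancy.

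First I would reduce to bounded $f$. Given $f\in\mathbf{B}^{p,\alpha}(X)$, the function $g=\arctan\circ f$ is a normal contraction of $f$, since $\arctan$ is $1$-Lipschitz and satisfies $|\arctan s|\le|s|$. By Remark~\ref{rem-normalcontruction} we then have $g\in\mathbf{B}^{p,\alpha}(X)$ with $\|g\|_{p,\alpha}\le\|f\|_{p,\alpha}$, and $g$ is bounded. As $\arctan$ is a strictly increasing bijection onto $(-\pi/2,\pi/2)$, once $g$ is shown to be a.e.\ constant it follows that $f$ is a.e.\ constant. It therefore suffices to treat bounded $f$, say $|f|\le M$.

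Next I would bridge the $L^p$ and $L^2$ worlds using boundedness. Writing $Q_r(t)=\int_X P_t(|f-f(y)|^r)(y)\,d\mu(y)$, the bound $|f(x)-f(y)|\le 2M$ gives $|f(x)-f(y)|^2\le(2M)^{2-p}|f(x)-f(y)|^p$ pointwise, whence $Q_2(t)\le(2M)^{2-p}Q_p(t)$; moreover boundedness together with $f\in L^p$ and $p\le 2$ yields $f\in L^2$ via $\int_X|f|^2\,d\mu\le M^{2-p}\int_X|f|^p\,d\mu$. Since the seminorm definition gives $Q_p(t)\le\|f\|_{p,\alpha}^p\,t^{p\alpha}$, combining these produces
\[
Q_2(t)\le (2M)^{2-p}\,\|f\|_{p,\alpha}^p\,t^{p\alpha}.
\]
Now I would invoke the computation inside Proposition~\ref{prop:energyasbesov}, which identifies $Q_2(t)=2\langle (I-P_t)f,f\rangle$ for $f\in L^2$, so that
\[
0\le \frac1t\langle (I-P_t)f,f\rangle=\frac{Q_2(t)}{2t}\le \tfrac12(2M)^{2-p}\,\|f\|_{p,\alpha}^p\,t^{p\alpha-1}.
\]
Because $\alpha>1/p$ forces $p\alpha-1>0$, the right-hand side tends to $0$ as $t\to0^+$. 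By Lemma~\ref{lem:Dirich-from-Pt} the map $t\mapsto\frac1t\langle(I-P_t)f,f\rangle$ is monotone and has a limit as $t\to0^+$ exactly when $f\in\mathcal{F}$, in which case the limit equals $\mathcal{E}(f,f)$. Here the limit exists and equals $0$, so $f\in\mathcal{F}$ and $\mathcal{E}(f,f)=0$; the hypothesis then forces $f$ to be constant, which settles the bounded case and hence, by the first reduction, the full statement.

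I expect the only genuinely delicate point to be this passage between the $L^p$ hypothesis on $f$ and the $L^2$ machinery underlying Lemma~\ref{lem:Dirich-from-Pt} and Proposition~\ref{prop:energyasbesov}: both the comparison $Q_2\le(2M)^{2-p}Q_p$ and the membership $f\in L^2$ rely crucially on boundedness, which is precisely why the normal-contraction reduction is performed first. Everything else amounts to a scaling estimate in $t$ driven by the strict inequality $\alpha>1/p$.
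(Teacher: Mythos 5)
Your proof is correct and follows essentially the same route as the paper's: reduce to a bounded function via a normal contraction, use the pointwise bound $|f(x)-f(y)|^2 \le (2M)^{2-p}|f(x)-f(y)|^p$ together with the identity $Q_2(t)=2\langle (I-P_t)f,f\rangle$ from Proposition~\ref{prop:energyasbesov} and Lemma~\ref{lem:Dirich-from-Pt} to conclude $f\in\mathcal{F}$ with $\mathcal{E}(f,f)=0$, and then invoke the hypothesis. The only cosmetic difference is the choice of contraction: the paper truncates at levels $n$ (so that constancy of each $f_n$ yields constancy of $f$ in the limit), whereas your single $\arctan$ contraction achieves the same reduction and sidesteps that final limiting step.
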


\begin{proof}
Let $f \in \mathbf{B}^{p,\alpha }(X)$ with $1 \le p \le 2$. For $n \ge 0$, we set 
$f_n:=\min\{n,\max\{-n,f\}\}$.
Since $|f_n(x) -f_n(y)| \le |f(x) -f(y)|$ for every $x,y \in X$ and therefore $P_t(|f_n-f_n(y)|^p)\le P_t(|f-f(y)|^p)$,
 it is clear that $f_n \in \mathbf{B}^{p,\alpha }(X)$. Moreover, 
\[
P_t(|f_n-f_n(x)|^2) =P_t(|f_n-f_n(x)|^{2-p} |f_n-f_n(x)|^p)  \le 2^{2-p} \| f_n \|^{2-p}_{L^\infty(X,\mu)} P_t(|f_n-f_n(x)|^p).
\]
Therefore,
\[
 \frac1{2t} \int_X P_t(|f_n-f_n(x)|^2)(x) d\mu(x) \le  2^{1-p} t^{\alpha p -1}\| f_n \|^{2-p}_{L^\infty(X,\mu)} \| f_n\|_{p,\alpha}^p.
 \]
 As $\alpha p>1$, this implies that
 \[
 \lim_{t \to 0} \frac1{2t} \int_X P_t(|f_n-f_n(x)|^2)(x) d\mu(x)  =0.
 \]
 Thus $f_n\in \mathbf{B}^{2,1/2}(X)$. Hence from Lemma~\ref{lem:Dirich-from-Pt} and 
 Proposition~\ref{prop:energyasbesov},
we see that
 $f_n \in \mathcal{F}$ and $\mathcal{E}(f_n,f_n)=0$. This implies that $f_n$ is constant for every $n$, thus $f$ is constant.
 \end{proof}

The following theorem says that functions in $\mathbf{B}^{p,1/2}(X)\cap\mathcal{F}$  have a property related to the  carr\'e du champ.
For the definition of a regular Dirichlet space and energy measure used in the proof we refer to the preliminaries 
in Section~\ref{sec2}. 

\begin{theorem}\label{thm-ac}
Let $p>2$. If $f\in\mathbf{B}^{p,1/2}(X)\cap\mathcal{F}$ then there is $\Gamma(f)\in L^1(X,\mu)$ such that for all $g\in L^\infty(X,\mu)\cap\mathcal{F}$, 
\begin{equation}\label{eqn-ac}
\int_X g \Gamma (f) d\mu =2 \mathcal{E}(gf,f)-\mathcal{E}(f^2,g).
\end{equation}
\end{theorem}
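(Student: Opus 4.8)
The plan is to deduce everything from absolute continuity of the energy measure $\nu_f$ with respect to $\mu$. Indeed, if $d\nu_f=h\,d\mu$ for some $h\ge0$, then $h\in L^1(X,\mu)$ because $\nu_f$ is a finite measure, and setting $\Gamma(f)=2h$ the identity \eqref{eqn-ac} is exactly twice the defining relation $\int_X g\,d\nu_f=\mathcal{E}(fg,f)-\tfrac12\mathcal{E}(g,f^2)$ of $\nu_f$ (extended from $\mathcal{F}\cap C_c(X)$ to $g\in\mathcal{F}\cap L^\infty$ by the standard theory of energy measures). To produce $h$, I would first record, for bounded $f,g$, the $g$-weighted analogue of the computation in Proposition~\ref{prop:energyasbesov}: writing $\mathcal{E}_t(u,v):=\tfrac1t\langle(I-P_t)u,v\rangle$, expansion of $|f-f(y)|^2$ together with the symmetry~\eqref{symmetry P} and conservativeness gives
\[
\frac1{2t}\int_X g(y)\,P_t(|f-f(y)|^2)(y)\,d\mu(y)=\mathcal{E}_t(fg,f)-\tfrac12\mathcal{E}_t(g,f^2).
\]
Since $\mathcal{F}\cap L^\infty(X,\mu)$ is an algebra, $fg$ and $f^2$ lie in $\mathcal{F}$, so by the polarized form of Lemma~\ref{lem:Dirich-from-Pt} the right-hand side tends as $t\to0$ to $\mathcal{E}(fg,f)-\tfrac12\mathcal{E}(g,f^2)=\int_X g\,d\nu_f$.

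The decisive input is a single Hölder inequality on the probability kernel $p_t(y,dx)\,d\mu(y)$, with the conjugate exponents $p/2$ and $p/(p-2)$. For $g\ge0$,
\[
\int_X\!\!\int_X g(y)|f(x)-f(y)|^2\,p_t(y,dx)\,d\mu(y)\le\Bigl(\int_X\!\!\int_X|f(x)-f(y)|^p\,p_t(y,dx)\,d\mu(y)\Bigr)^{2/p}\|g\|_{L^{p/(p-2)}(X,\mu)},
\]
where the $g$-factor was evaluated using that $p_t(y,\cdot)$ is a probability measure. As $\alpha=1/2$, the first factor is at most $t\,\|f\|_{p,1/2}^2$; dividing by $2t$ and letting $t\to0$ therefore gives, for all $0\le g\in\mathcal{F}\cap L^\infty$,
\[
\int_X g\,d\nu_f\le\tfrac12\|f\|_{p,1/2}^2\,\|g\|_{L^{p/(p-2)}(X,\mu)}.
\]
Applying this to $|g|$ (a normal contraction of $g$, hence in $\mathcal{F}\cap L^\infty$) removes the sign restriction, so $g\mapsto\int_X g\,d\nu_f$ is $L^{p/(p-2)}$-bounded on the core. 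Here $p>2$ is essential: it makes $p/2>1$, so $L^{p/(p-2)}(X,\mu)$ has dual $L^{p/2}(X,\mu)$ and the functional is represented by a density $h\in L^{p/2}$; matching the finite measure $\nu_f$ against $h\,d\mu$ on the dense set $\mathcal{F}\cap C_c(X)$ yields $d\nu_f=h\,d\mu$ whenever $f$ is bounded.

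For unbounded $f$ I would pass through the truncations $f_n=\min\{n,\max\{-n,f\}\}$. Each is a normal contraction of $f$, hence $f_n\in\mathcal{F}\cap L^\infty\cap\mathbf{B}^{p,1/2}(X)$ with $\|f_n\|_{p,1/2}\le\|f\|_{p,1/2}$ by Remark~\ref{rem-normalcontruction}, and the previous step yields $d\nu_{f_n}=h_n\,d\mu$ with $\|h_n\|_{L^{p/2}}\le\tfrac12\|f\|_{p,1/2}^2$ uniformly in $n$. Reflexivity of $L^{p/2}$ (again using $p>2$) furnishes a subsequence with $h_n\rightharpoonup h$ weakly, while $\nu_{f_n}\to\nu_f$ weakly by the very definition of $\nu_f$; testing both against a fixed $g\in\mathcal{F}\cap C_c(X)\subset L^{p/(p-2)}(X,\mu)$ identifies $d\nu_f=h\,d\mu$. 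This gives the required $\Gamma(f)=2h$ and hence \eqref{eqn-ac}.

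I expect the main obstacle to be the representation and limiting step rather than the estimates: one must carefully upgrade a bound valid a priori only for core functions into genuine absolute continuity of the finite measure $\nu_f$ with $L^{p/2}$ density, and then transfer this through truncation by weak compactness in $L^{p/2}$. Both the availability of a dual representation and the weak compactness rest on the strict inequality $p>2$ (equivalently $p/2>1$), which is exactly where the hypothesis enters; by contrast the weighted limit identity and the Hölder estimate are routine.
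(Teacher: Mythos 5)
Your core computation and key estimate coincide with the paper's: the weighted identity $\tfrac{1}{2t}\int_X g(y)P_t(|f-f(y)|^2)(y)\,d\mu(y)=\mathcal{E}_t(fg,f)-\tfrac12\mathcal{E}_t(g,f^2)$, the limit $t\to 0$ via the polarized form of Lemma~\ref{lem:Dirich-from-Pt}, and the H\"older bound with exponents $p/2$ and $p/(p-2)$ yielding $\int_X g\,d\nu_f\le\tfrac12\|f\|_{p,1/2}^2\|g\|_{L^{p/(p-2)}(X,\mu)}$. After that you diverge: you extract the density by $L^{p/(p-2)}$--$L^{p/2}$ duality together with truncation and weak compactness in the reflexive space $L^{p/2}$, whereas the paper deduces absolute continuity directly, by testing the estimate on functions sandwiched between indicators $\mathbf{1}_{E_1}\le g\le \mathbf{1}_{E_2}$ and using outer regularity of the Radon measures to show that $\nu_f$ annihilates $\mu$-null compact sets, then applying Radon--Nikodym. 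Your variant is viable in principle, and it treats unbounded $f$ somewhat more explicitly than the paper does; but it leans on two points you gloss over: density of $\mathcal{F}\cap C_c(X)$ in $L^{p/(p-2)}(X,\mu)$ (uniform density of a core in $C_c(X)$ does not by itself give $L^r$-density when $\mu(X)=\infty$; one needs approximants with controlled supports), and the step upgrading agreement of $\nu_f$ and $h\,d\mu$ on $\mathcal{F}\cap C_c(X)$ to equality of measures.

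The genuine gap, however, is prior to all of this: every object you use---the energy measure $\nu_f$, the core $\mathcal{F}\cap C_c(X)$, Radon measures and their outer regularity---presupposes that $(X,\mu,\mathcal{E},\mathcal{F})$ is a \emph{regular} Dirichlet space on a locally compact space. The theorem is stated, and the paper's standing assumptions are framed, for a general Dirichlet space on a good measurable space with a $\sigma$-finite measure: no topology on $X$ is given, and the paper defines $\nu_f$ in Section~\ref{sec2} only under the regularity hypothesis. This is exactly why the paper's proof opens by invoking \cite[Theorem A.4.1(ii)]{FOT} (Fukushima's representation theorem, realized as a Gelfand transform in \cite{HKT}): the given Dirichlet space is equivalent to a regular one $(X',\mu',\mathcal{E}',\mathcal{F}')$ with equivalent $L^p$ spaces, semigroups, and hence Besov classes; the whole argument is carried out on $X'$, and the resulting density is transported back to produce $\Gamma(f)\in L^1(X,\mu)$ satisfying \eqref{eqn-ac}. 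Without this transference step, your argument proves the theorem only for regular Dirichlet spaces, not in the stated generality; with it, the rest of your proof can be salvaged essentially as written.
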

\begin{proof}
According to  \cite[Theorem A.4.1(ii)]{FOT}, any  Dirichlet space  $(X,\mu,\mathcal{E},\mathcal{F})$
satisfying our assumptions  is equivalent to a regular  Dirichlet space  $(X',\mu',\mathcal{E}',\mathcal{F}')$ where $\mu'$ is a Radon measure.
This result was first obtained in~\cite{Fukushima71}, and in~\cite[Section~6]{HKT}  the isomorphism is realized as a Gelfand transform.  According to~\cite[Appendix A.4
]{FOT}, the equivalence between the  Dirichlet spaces  $(X,\mu,\mathcal{E},\mathcal{F})$
and  
$(X',\mu',\mathcal{E}',\mathcal{F}')$
implies the equivalence of $L^p(X,\mu)$ and $L^p(X',\mu')$ spaces  and the equivalence of corresponding  semigroups $P_t$ and $P_t'$. The spaces $ \mathbf{B}^{p,1/2}(X)$ and $\mathbf{B}^{p,1/2}(X')$ are therefore also equivalent.

Since $(X',\mu',\mathcal{E}',\mathcal{F}')$ is regular,  the Radon energy measure $\nu'_{f'}$ exists for any $f'\in\mathcal{F}'$.
Let $f' \in \mathbf{B}^{p,1/2}(X')\cap\mathcal{F'}$ and suppose $g' \in L^\infty (X',\mu')\cap \mathcal{F'}$ 
with support of finite $\mu'$-measure. 
Note that then $g'\in L^{p/(p-2)}(X',\mu')$.

By H\"older's inequality from Lemma~\ref{lem:Pt-Holder}, we have
\begin{align*}
 \frac1t\int_{X'} |g'(y)| P'_t(|f'-f'(y)|^2)(y)\,d\mu'(y)
 &\leq \frac1t\int_{X'} |g'(y)| \bigl( P_t' (|f'-f'(y)|^p)(y)\bigr)^{2/p}\,d\mu'(y) \\
 &\leq \frac1t\biggl( \int_{X'} P_t'(|f'-f'(y)|^p)(y) \,d\mu'(y) \biggr)^{2/p} \|g'\|_{L^{\frac{p}{p-2}}(X',\mu')}\\
 &\leq \|f'\|_{p,1/2}^2\|g'\|_{L^{\frac{p}{p-2}}(X',\mu')}.
 \end{align*}
Observe that  we have
\begin{align*}
\int_{X'} g'(y)P_t'(|f'-f'(y)|^2)(y)\,d\mu'(y)&=\langle P_t'((f')^2),g'\rangle-2\langle P_t'f', f'g'\rangle+\langle (f')^2,g'\rangle\\
 &=-\langle -P_t'((f')^2),g'\rangle-2\langle P_t'f', f'g'\rangle+2\langle f',f'g'\rangle-\langle (f')^2,g'\rangle\\
 &=-\langle (I'-P_t')(f')^2,g'\rangle+2\langle (I'-P'_t)f',f'g'\rangle.
\end{align*}
Now using the above identity and then taking the limit $t\to 0$, we obtain
\begin{align*}
	 \|f'\|_{p,1/2}^2\|g'\|_{L^{p/(p-2)}(X',\mu)}
 \geq \lim_{t\to 0}\left( \frac2t\langle (I'-P_t')f', f'g' \rangle - \frac1t\langle (I'-P_t') (f')^2, g\rangle\right)
 \\= 2\DF'(f'g',f')-\DF'((f')^2,g')= \int_{X'}  2g'\,d\nu'_{f'},
\end{align*}  
where, as in the previous result, the limit is by 
Lemma~\ref{lem:Dirich-from-Pt} above. 
The final equality is from the definition of $\nu'_{f'}$ and~\cite[Theorem 4.3.11]{ChenFukushima},
see also~\cite{Beurling-Deny}. 

In particular, if $E_1\subset E_2$ are of finite $\mu'$ measure and $\mathbf{1}_{E_1}\leq g'\leq \mathbf{1}_{E_2}$ then we obtain
\begin{equation*}
\nu'_{f'}(E_1) \leq \int_{X'} g'\,d\nu'_{f'} \leq \frac{1}{2} \| f'\|_{p,1/2}^2 \bigl( \mu'(E_2) \bigr)^{(p-2)/p}.
\end{equation*}

We wish to show that $\nu'_{f'}$-measure of a $\mu'$-null $X'$-Borel set is zero. Since both $\mu'$ 
and $\nu'_{f'}$ are $X'$-Radon measures,
it suffices to show this for a compact $\mu'$-null set $E_1$. For $U\supset E_1$ open with compact closure there is a continuous 
function $h$ satisfying $h=1$ on $E_1$ and $h=0$ on $X'\setminus U$. Then by the regularity of $\mathcal{E}'$, we can find 
$k\in\mathcal{F'}$ for which $\|h-k\|_\infty<1/3$ (see~\cite[Definition~1.3.10(iii)]{ChenFukushima}), at which point $g'=3((k\wedge 2/3)-1/3)\vee0$ satisfies the conditions of the above estimate with $E_2=\bar{U}$, the closure of $U$. 
Then
\begin{equation*}
	\nu'_{f'}(E_1)\leq \|f'\|^2_{p,1/2} \mu'(\bar{U})^{(p-2)/p}\leq \|f'\|^2_{p,1/2}\mu'(V)^{(p-2)/p}
	\end{equation*}
for any open $V$ containing $\bar{U}$. Thus $\nu'_{f'}(E_1)\leq \inf_V\|f'\|^2_{p,1/2}\mu'(V)^{(p-2)/p}$, with the infimum over 
all open sets $V$ containing $E_1$; this is zero by the outer regularity of $\mu'$ on $X'$.  Hence $\nu'_{f'}\ll\mu'$ with a  density $\frac{\nu'_{f'}}{\mu'}\in L^1(X',\mu')$.  However the equivalence of the Dirichlet forms and $L^p$ spaces then allows us to take $\Gamma(f)\in L^1(X,\mu)$ so that for $u\in\mathcal{F}\cap L^\infty(X,\mu)$, 
\begin{equation*}
	\int_X u\Gamma(f)\,d\mu= \int_{X'} u'\frac{\nu'_{f'}}{\mu'}\,d\mu'
	= 2\DF'(f'u',f')-\DF'((f')^2,g')= \int_{X'}  2u'\,d\nu'_{f'}
	= 2\DF(fu,f)-\DF(f^2,u).\qedhere
	\end{equation*}
\end{proof}

We deduce two corollaries.  The first uses the definition of a  carr\'e du champ operator, see~ \cite[Defintion 4.1.2 ]{BouleauHirsch}, which is that there is a map $f\mapsto\Gamma(f)$ on a $\DF_1$-dense subspace of $\mathcal{F}\cap L^\infty$ such that~\eqref{eqn-ac} holds. It shows that for $p>2$,  $\mathbf{B}^{p,1/2}(X)\cap\mathcal{F}$  is dense in $\mathcal{F}$ only in Dirichlet spaces that admit a carr\'e du champ operator.

\begin{corollary}
If $\mathbf{B}^{p,1/2}(X)\cap\mathcal{F}$ is dense in $\mathcal{F}$ with respect to the 
norm $\DF_1$ defined in~\eqref{e-E1} for some $p>2$, then $\DF$ admits a carr\'e du champ operator. In particular,~\eqref{eqn-ac} is true for all $f\in\mathcal{F}\cap L^\infty(X,\mu)$.
\end{corollary}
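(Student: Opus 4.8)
The plan is to read off the carré du champ operator directly from Theorem~\ref{thm-ac}: that result already assigns to every $f\in\mathbf{B}^{p,1/2}(X)\cap\mathcal{F}$ a function $\Gamma(f)\in L^1(X,\mu)$ satisfying the defining identity~\eqref{eqn-ac} against all $g\in\mathcal{F}\cap L^\infty(X,\mu)$, so the only thing to supply is a $\DF_1$-dense subspace of $\mathcal{F}\cap L^\infty(X,\mu)$ on which $\Gamma$ is defined. This is where the density hypothesis enters.

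First I would set $\mathcal{D}:=\mathbf{B}^{p,1/2}(X)\cap\mathcal{F}\cap L^\infty(X,\mu)$. It is a linear subspace: $\|\cdot\|_{p,1/2}$ is a genuine seminorm, its subadditivity being Minkowski's inequality for the measure $p_t(y,dx)\,d\mu(y)$, so $\mathbf{B}^{p,1/2}(X)$ is a vector space, and $\mathcal{F}$, $L^\infty(X,\mu)$ are as well. To see that $\mathcal{D}$ is $\DF_1$-dense in $\mathcal{F}$ I would combine the hypothesis with truncation: given $f\in\mathcal{F}$, the hypothesis produces $h\in\mathbf{B}^{p,1/2}(X)\cap\mathcal{F}$ arbitrarily $\DF_1$-close to $f$, and the truncations $h_n=\min\{n,\max\{-n,h\}\}$ are normal contractions of $h$, hence lie in $\mathbf{B}^{p,1/2}(X)$ with $\|h_n\|_{p,1/2}\le\|h\|_{p,1/2}$ by Remark~\ref{rem-normalcontruction}, belong to $L^\infty(X,\mu)$, and converge to $h$ in the $\DF_1$-norm by the standard truncation property of Dirichlet forms~\cite[Theorem~1.4.2]{FOT}. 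Thus $\mathcal{D}$ is $\DF_1$-dense in $\mathcal{F}$, and restricting $f\mapsto\Gamma(f)$ to $\mathcal{D}$ yields a carré du champ operator in the sense of~\cite[Definition~4.1.2]{BouleauHirsch}, proving the first assertion.

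For the ``in particular'' I would invoke the standard extension of the carré du champ from a dense subspace to all of $\mathcal{F}$. Either working in the regular representation of Theorem~\ref{thm-ac}, where~\eqref{eqn-ac} reads $\Gamma(f)\,d\mu=2\,d\nu_f$, or directly from the Cauchy--Schwarz inequality for $\Gamma$, one has the pointwise triangle inequality $\sqrt{\Gamma(f+g)}\le\sqrt{\Gamma(f)}+\sqrt{\Gamma(g)}$ together with $\int_X\Gamma(h)\,d\mu=2\DF(h,h)$; these give the bound $\bigl\|\sqrt{\Gamma(f)}-\sqrt{\Gamma(g)}\bigr\|_{L^2(X,\mu)}\le\sqrt{2}\,\DF(f-g,f-g)^{1/2}$, so $f\mapsto\sqrt{\Gamma(f)}$ extends continuously from $\mathcal{D}$ to all of $\mathcal{F}$, forcing $\nu_f\ll\mu$ for every $f\in\mathcal{F}$ and $\Gamma(f):=2\,d\nu_f/d\mu\in L^1(X,\mu)$. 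One then passes to the limit in~\eqref{eqn-ac} along $\mathcal{D}\ni f_n\to f$. I expect this last step to be the main obstacle: one must check that $\Gamma(f_n)\to\Gamma(f)$ in $L^1(X,\mu)$, which follows from $L^2$-convergence of the square roots, and that $2\DF(gf_n,f_n)-\DF(f_n^2,g)\to2\DF(gf,f)-\DF(f^2,g)$, which rests on the continuity of $\DF$ on products in the $\DF_1$-algebra $\mathcal{F}\cap L^\infty(X,\mu)$. Granting these continuity facts, the identity~\eqref{eqn-ac} holds for all $f\in\mathcal{F}\cap L^\infty(X,\mu)$.
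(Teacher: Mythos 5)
Your first paragraph and your density argument are correct and match the paper's route: the paper likewise reads $\Gamma$ off Theorem~\ref{thm-ac} and combines it with the quoted definition of a carr\'e du champ operator from~\cite{BouleauHirsch}. In fact you are more careful than the paper on one point, since you verify via truncation and Remark~\ref{rem-normalcontruction} that the \emph{bounded} part $\mathcal{D}=\mathbf{B}^{p,1/2}(X)\cap\mathcal{F}\cap L^\infty(X,\mu)$ is still $\DF_1$-dense in $\mathcal{F}$, a detail the paper leaves implicit. Where you diverge is the ``in particular'' clause: the paper's entire proof of that clause is a citation of~\cite[Proposition~4.1.3]{BouleauHirsch}, which is exactly the extension result you attempt to reprove in your last paragraph.

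That attempted reproof has a genuine gap, and it sits precisely where you write ``granting these continuity facts.'' The fact you grant, ``continuity of $\DF$ on products in the $\DF_1$-algebra $\mathcal{F}\cap L^\infty(X,\mu)$,'' is false as stated: the product bound for Dirichlet forms is $\mathcal{E}(uv,uv)^{1/2}\le \|u\|_{L^\infty(X,\mu)}\mathcal{E}(v,v)^{1/2}+\|v\|_{L^\infty(X,\mu)}\mathcal{E}(u,u)^{1/2}$, and $\DF_1$-convergence $f_n\to f$ gives no control whatsoever on $\|f_n-f\|_{L^\infty(X,\mu)}$, so it does not yield $gf_n\to gf$ or $f_n^2\to f^2$ in $\DF_1$. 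To close the gap one must first replace the approximating sequence by a uniformly bounded one (truncate at level $\|f\|_{L^\infty(X,\mu)}$; truncation maps $\mathcal{D}$ into $\mathcal{D}$, and it preserves $\DF_1$-convergence to $f$ by the contraction bound, lower semicontinuity of $\mathcal{E}$ under $L^2$-convergence, and the Hilbert-space fact that weak convergence together with convergence of norms implies strong convergence), and then argue by \emph{weak} convergence: $gf_n$ and $f_n^2$ are bounded in $\DF_1$ and converge in $L^2(X,\mu)$, hence converge weakly in $\DF_1$ to $gf$ and $f^2$, which is enough to pass to the limit in $2\DF(gf_n,f_n)-\DF(f_n^2,g)$ after writing this expression as a symmetric bilinear form in $f_n$ and splitting off the strongly convergent factor. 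This is, in substance, the proof of the cited Proposition~4.1.3. The other ingredients of your sketch are sound: the a.e.\ triangle inequality for $\sqrt{\Gamma(\cdot)}$, the bound $\int_X\Gamma(h)\,d\mu\le 2\DF(h,h)$ (note that only this inequality is needed for your $L^2$-estimate; the equality you assert uses conservativeness, i.e.\ absence of a killing part), and the $L^1$-convergence of $\Gamma(f_n)$ from $L^2$-convergence of the square roots. Alternatively, you could stop after your second paragraph and invoke~\cite[Proposition~4.1.3]{BouleauHirsch} exactly as the paper does.
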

\begin{proof}
The proof that $\Gamma$ extends to represent all $f\in\mathcal{F}\cap L^\infty(X,\mu)$ is~\cite[Proposition 4.1.3]{BouleauHirsch}.
\end{proof}

The second corollary is of interest because it is known there are spaces that admit regular Dirichlet forms for which the energy measure $\nu_f$ is singular to $\mu$ for any non-constant $f\in\mathcal{F}\cap L^\infty(X,\mu)$, see~\cite{Kusuoka,BenBassatStrichartzTeplyaev}.  Examples of such spaces include the Sierpinski gasket, see for instance~\cite{BP, BarlowHambly97, Ka13, Kajino}. These spaces also have the property that $\DF(f,f)=0$ implies $f$ is constant, so the following result says that on these spaces $\mathbf{B}^{p,1/2}(X)$ consists of constant functions when $p>2$.

\begin{corollary}\label{singular Kusuoka}
Suppose that for all $f\in\mathcal{F}$ we have that $f$ is constant whenever $\mathcal{E}(f,f)=0$. Then, if $\DF$ is regular and the energy measure $\nu_f$ is singular to $\mu$ for any non-constant $f\in\mathcal{F}$, the space $\mathbf{B}^{p,1/2}(X)$ contains only constant functions when $p>2$.
\end{corollary}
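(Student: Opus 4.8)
The plan is to play the absolute-continuity output of Theorem~\ref{thm-ac} against the singularity hypothesis, and to deduce triviality from the standing assumption that vanishing energy forces constancy. I would first settle the claim on the intersection $\mathbf{B}^{p,1/2}(X)\cap\mathcal{F}$, where Theorem~\ref{thm-ac} applies directly, and only afterwards remove the extra membership in $\mathcal{F}$ by a truncation argument.

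For the core step, suppose $f\in\mathbf{B}^{p,1/2}(X)\cap\mathcal{F}$ with $p>2$. Since $\mathcal{E}$ is regular, the energy measure $\nu_f$ is defined, and Theorem~\ref{thm-ac} furnishes $\Gamma(f)\in L^1(X,\mu)$ with $\int_X g\,\Gamma(f)\,d\mu = 2\mathcal{E}(gf,f)-\mathcal{E}(f^2,g) = 2\int_X g\,d\nu_f$ for all $g\in L^\infty(X,\mu)\cap\mathcal{F}$; comparing with the defining formula for $\nu_f$ this exhibits $\nu_f=\tfrac12\Gamma(f)\,\mu$, so $\nu_f\ll\mu$. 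If $f$ were non-constant, the hypothesis would give $\nu_f\perp\mu$. A measure that is simultaneously absolutely continuous and singular with respect to $\mu$ must vanish, hence $\nu_f=0$, whence $\mathcal{E}(f,f)=\nu_f(X)=0$ (the examples in view being strongly local). By the standing assumption $\mathcal{E}(f,f)=0$ forces $f$ to be constant, contradicting non-constancy. Therefore every element of $\mathbf{B}^{p,1/2}(X)\cap\mathcal{F}$ is constant.

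To upgrade this to all of $\mathbf{B}^{p,1/2}(X)$, I would pass to the truncations $f_n:=(-n)\vee(f\wedge n)$. Each $f_n$ is a normal contraction of $f$, so by Remark~\ref{rem-normalcontruction} we have $f_n\in\mathbf{B}^{p,1/2}(X)$ with $\|f_n\|_{p,1/2}\le\|f\|_{p,1/2}$. If each $f_n$ lies in $\mathcal{F}$, the previous paragraph makes each $f_n$ constant, and letting $n\to\infty$ (with $f_n\to f$ in $L^p$ by dominated convergence, since $|f_n|\le|f|\in L^p$) yields that $f$ is constant.

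The step that requires genuine care, and which I would flag as the main obstacle, is the membership $f_n\in\mathcal{F}$, equivalently $f_n\in\mathbf{B}^{2,1/2}(X)$ via Proposition~\ref{prop:energyasbesov}. The boundedness trick used for $p\le2$ in Proposition~\ref{prop:BesovCEp} is unavailable here, because the exponent $2-p$ is now negative and the pointwise domination runs the wrong way; on the region where $|f_n(x)-f_n(y)|$ is small the $L^2$ integrand genuinely dominates the $L^p$ one. The clean way around this is the finite-measure monotonicity: when $\mu(X)<\infty$ — which is precisely the setting of the Sierpinski gasket and the other cited examples — one has $\mathbf{B}^{p,1/2}(X)\subseteq\mathbf{B}^{2,1/2}(X)=\mathcal{F}$ immediately, so the truncation reduction is automatic and in fact unnecessary. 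I would therefore carry out the argument under this observation, applying H\"older's inequality pointwise in the heat-kernel measure together with $\mu(X)<\infty$ to control the $L^2$ Besov seminorm of $f_n$ by its $L^p$ Besov seminorm, thereby placing $f_n$ in $\mathcal{F}$ and closing the proof.
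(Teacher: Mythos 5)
Your core step is exactly right and matches the paper: on $\mathbf{B}^{p,1/2}(X)\cap\mathcal{F}$, Theorem~\ref{thm-ac} gives $\nu_f\ll\mu$, the singularity hypothesis gives $\nu_f\perp\mu$ for non-constant $f$, so $\nu_f=0$, and the standing assumption then forces constancy. You also correctly identified the real obstacle: for $p>2$ there is no direct way to place a truncation $f_n$ in $\mathcal{F}$, since the domination trick of Proposition~\ref{prop:BesovCEp} runs the wrong way. But your resolution --- invoking $\mu(X)<\infty$ so that $\mathbf{B}^{p,1/2}(X)\subset\mathbf{B}^{2,1/2}(X)=\mathcal{F}$ --- silently adds a hypothesis: the corollary as stated carries no finite-measure assumption, and the paper uses it in that generality (infinite-measure examples such as the infinite Sierpinski gaskets appear later, in the discussion of critical exponents, where this corollary feeds into Proposition~\ref{Besov critical exponents}). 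So what you have proved is a strictly weaker statement; this is a genuine gap, not a cosmetic one.

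The missing idea is the power map of Proposition~\ref{prop:convexityembed}. Reduce first, by Remark~\ref{rem-normalcontruction} (truncations and positive/negative parts are normal contractions), to the case $f\ge 0$ bounded, $f\in\mathbf{B}^{p,1/2}(X)$. Then Proposition~\ref{prop:convexityembed} with $q=2<p$ gives $f^{p/2}\in\mathbf{B}^{2,1/2}(X)=\mathcal{F}$ --- no finiteness of $\mu$ is needed anywhere in that proposition --- while boundedness of $f$ gives $f^{p/2}\in\mathbf{B}^{p,1/2}(X)$ as well, because $t\mapsto t^{p/2}$ is Lipschitz on $[0,\|f\|_{L^\infty(X,\mu)}]$, so $f^{p/2}$ is a constant multiple of a normal contraction of $f$. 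Hence $f^{p/2}\in\mathbf{B}^{p,1/2}(X)\cap\mathcal{F}$, and your own core step, applied to $f^{p/2}$ rather than to $f$ or its truncations, yields $\nu_{f^{p/2}}=0$, so that $f^{p/2}$, and therefore $f$, is constant. This is precisely the paper's proof: instead of trying to push $f$ itself into $\mathcal{F}$, one pushes a nonlinear image of $f$ into $\mathcal{F}$, which is what the convexity embedding is designed to do, and it removes the finite-measure restriction entirely.
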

\begin{proof}

Suppose that $f\in \mathbf{B}^{p,1/2}(X) $ and assume without loss of generality, by Remark \ref{rem-normalcontruction}, that $f\geqslant0$ is bounded. Then, from Lemma \ref{prop:convexityembed} $f^{p/2} \in \mathbf{B}^{2,1/2}(X)  =\mathcal F$. However, since $f$ is bounded, one also has  $f^{p/2} \in \mathbf{B}^{p,1/2}(X) $. Therefore, $f^{p/2} \in \mathbf{B}^{p,1/2}(X)\cap\mathcal F$. From the proof of Theorem~\ref{thm-ac} we can conclude that $\nu_{f^{p/2}}=0$, thus $f^{p/2}$ and hence $f$ are constants.   
\end{proof}

\subsection{Banach space property and reflexivity}

In this section we prove that  for $p\ge 1$ and $\alpha \ge 0$, $\mathbf{B}^{p,\alpha}(X)$ is always a Banach space which is moreover reflexive if $p>1$.

\begin{proposition}\label{prop-Banach}
 For $p\ge 1$ and $\alpha \ge 0$, $\mathbf{B}^{p,\alpha}(X)$ is a Banach space.
 \end{proposition}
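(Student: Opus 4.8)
The plan is to verify first that $\|\cdot\|_{\mathbf{B}^{p,\alpha}(X)}$ is genuinely a norm, and then to prove completeness. For the norm property, the only nontrivial point is the triangle inequality for the seminorm $\|\cdot\|_{p,\alpha}$. Here I would observe that, for each fixed $t>0$, the quantity $\left(\int_X P_t(|h-h(y)|^p)(y)\,d\mu(y)\right)^{1/p}$ is exactly the $L^p$-norm of the function $(x,y)\mapsto h(x)-h(y)$ with respect to the measure $p_t(y,dx)\,d\mu(y)$ on $X\times X$. Minkowski's inequality in this $L^p$-space then gives subadditivity for every $t$, and taking the supremum over $t>0$ preserves it. Absolute homogeneity is immediate, and since $\|\cdot\|_{L^p(X,\mu)}$ already separates points, the sum $\|\cdot\|_{\mathbf{B}^{p,\alpha}(X)}=\|\cdot\|_{L^p(X,\mu)}+\|\cdot\|_{p,\alpha}$ is a norm.

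For completeness, I would take a Cauchy sequence $(f_n)$ in $\mathbf{B}^{p,\alpha}(X)$. Since $\|f_n-f_m\|_{L^p(X,\mu)}\le\|f_n-f_m\|_{\mathbf{B}^{p,\alpha}(X)}$, the sequence is Cauchy in $L^p(X,\mu)$ and converges there to some $f\in L^p(X,\mu)$; passing to a subsequence, I may assume $f_m\to f$ $\mu$-a.e. The goal is to show $f_n-f\to 0$ in $\|\cdot\|_{p,\alpha}$. Fixing $\varepsilon>0$ and $N$ so that $\|f_n-f_m\|_{p,\alpha}<\varepsilon$ for $n,m\ge N$ means that for every $t>0$,
\[
\int_X\int_X |(f_n-f_m)(x)-(f_n-f_m)(y)|^p\,p_t(y,dx)\,d\mu(y)<(\varepsilon t^\alpha)^p.
\]
I would then fix $t$ and $n$ and let $m\to\infty$ along the a.e.-convergent subsequence, applying Fatou's lemma with respect to the measure $p_t(y,dx)\,d\mu(y)$ to obtain
\[
\int_X\int_X |(f_n-f)(x)-(f_n-f)(y)|^p\,p_t(y,dx)\,d\mu(y)\le(\varepsilon t^\alpha)^p,
\]
whence $\|f_n-f\|_{p,\alpha}\le\varepsilon$ for all $n\ge N$. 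Together with $\|f_n-f\|_{L^p(X,\mu)}\to 0$, this yields both $f=f_n-(f_n-f)\in\mathbf{B}^{p,\alpha}(X)$ and $\|f_n-f\|_{\mathbf{B}^{p,\alpha}(X)}\to 0$.

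The one technical point that must be justified, and which I expect to be the main obstacle, is the pointwise convergence of the integrand for $p_t(y,dx)\,d\mu(y)$-a.e.\ $(x,y)$, which is what legitimizes the use of Fatou. Writing $Z$ for the $\mu$-null set off which $f_m\to f$, the outer integration in $d\mu(y)$ handles the $f_m(y)$ terms for a.e.\ $y$; for the $f_m(x)$ terms I must know that $p_t(y,Z)=0$ for $\mu$-a.e.\ $y$. This follows from conservativeness and the symmetry~\eqref{symmetry P} of the semigroup: since $P_t\mathbf 1=1$,
\[
\int_X p_t(y,Z)\,d\mu(y)=\int_X P_t\mathbf 1_Z\,d\mu=\int_X \mathbf 1_Z\,P_t\mathbf 1\,d\mu=\mu(Z)=0,
\]
so $p_t(y,Z)=0$ for $\mu$-a.e.\ $y$, and hence $f_m(x)\to f(x)$ for $p_t(y,\cdot)$-a.e.\ $x$, for $\mu$-a.e.\ $y$. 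Once this null-set argument is established the remainder of the proof is routine.
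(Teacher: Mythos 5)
Your proof is correct, including the technical point you rightly flagged: the use of Fatou with respect to $p_t(y,dx)\,d\mu(y)$ does require knowing that the heat kernel measures do not charge $\mu$-null sets, and your symmetry-plus-conservativeness argument (which is precisely~\eqref{heat kernel measure symmetry} applied to $F(x,y)=\mathbf{1}_Z(x)$) settles it. However, the paper's proof avoids this issue entirely by a different device. Instead of extracting an a.e.-convergent subsequence and invoking Fatou, the paper observes that the Minkowski inequality you used only for the triangle inequality of the seminorm in fact yields a quantitative Lipschitz estimate: writing $\Phi_t(h)=\bigl(\int_X P_t(|h-h(y)|^p)(y)\,d\mu(y)\bigr)^{1/p}$, the reverse triangle inequality together with conservativeness and the symmetry of $P_t$ gives $|\Phi_t(g)-\Phi_t(h)|\le \Phi_t(g-h)\le 2\|g-h\|_{L^p(X,\mu)}$ for every fixed $t>0$. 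Hence $\Phi_t$ is continuous on $L^p$, so $\Phi_t(f_n)\to\Phi_t(f)$ outright (an equality in the limit, not merely a one-sided bound), and both conclusions --- $f\in\mathbf{B}^{p,\alpha}(X)$ and $\|f-f_m\|_{p,\alpha}\le\lim_n\|f_n-f_m\|_{p,\alpha}\to 0$ --- follow without subsequences or any null-set considerations. The trade-off: the paper's route is shorter and purely deterministic, while your route is the standard measure-theoretic completeness argument and has the side benefit of making explicit a fact (kernels do not charge $\mu$-null sets) that is also implicitly needed for the seminorm to be well defined on $L^p$-equivalence classes; it is worth noting that you already had the paper's key estimate in hand from your Minkowski step and could have applied it to $g=f_n-f_m$, $h=f_n-f$ to shortcut the completeness argument.
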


\begin{proof}
Let $f_n$ be a Cauchy sequence in $\mathbf{B}^{p,\alpha}(X)$. Let $f$ be the $L^p$ limit of $f_n$. From 
Minkowski's inequality used from the representation \eqref{heat kernel measure} and conservativeness of $P_t$, one has 
\begin{align*}
 &  \left| \left(\int_X P_t (|f_n-f_n(y)|^p)(y) d\mu(y) \right)^{1/p} - \left(\int_X P_t (|f-f(y)|^p)(y) d\mu(y) \right)^{1/p} \right| \\
 & \ \ \ \ \ \ \ \  \ \ \ \ \le \left(\int_X P_t (|(f_n-f)-(f_n(y)-f(y))|^p)(y) d\mu(y) \right)^{1/p} \\
 &\ \ \  \ \ \ \ \ \ \ \ \ \le \left(\int_X P_t (|f_n-f|^p)(y) d\mu(y) \right)^{1/p}+\left(\int_X P_t (|f_n(y)-f(y)|^p)(y) d\mu(y) \right)^{1/p} \\
 &\ \ \  \ \ \ \ \ \ \  \ \  \le 2 \| f-f_n \|_{L^p(X,\mu)} .
\end{align*}
Therefore
\[
\lim_{n \to +\infty}  \left(\int_X P_t (|f_n-f_n(y)|^p)(y) d\mu(y) \right)^{1/p}=  \left(\int_X P_t (|f-f(y)|^p)(y) d\mu(y) \right)^{1/p},
\]
from which we deduce that
\begin{align*}
\frac{1}{t^\alpha}\left(\int_X P_t (|f-f(y)|^p)(y) d\mu(y) \right)^{1/p}
&=\lim_{n\to\infty}\frac{1}{t^\alpha}\left(\int_X P_t (|f_n-f_n(y)|^p)(y) d\mu(y) \right)^{1/p}\\
&\le \lim_{n\to\infty}\| f_n \|_{p,\alpha}<\infty.
\end{align*}
Therefore $f \in \mathbf{B}^{p,\alpha}(X)$ and $\| f \|_{p,\alpha} \le \lim_{n \to +\infty} \| f_n \|_{p,\alpha}$. Similarly,
for each fixed positive integer $m$,
\[
\| f-f_m \|_{p,\alpha} \le  \lim_{n \to +\infty} \| f_n -f_m \|_{p,\alpha}
\]
and taking the limit $m \to +\infty$ together with the fact that 
$(f_n)$ is Cauchy with respect to the seminorm $\|\cdot\|_{p,\alpha}$ completes the proof.
\end{proof}

We now turn to the reflexivity of $\mathbf{B}^{p,\alpha}(X)$. The Clarkson inequalities for $L^p$-functions are well-known. Given them, the following equivalent norm of $\|\cdot\|_{\mathbf{B}^{p,\alpha}(X)}$
immediately verifies the Clarkson inequalities for $\mathbf{B}^{p,\alpha}(X)$ given below. 
The equivalent norm, still denoted by $\| \cdot \|_{\mathbf{B}^{p,\alpha}(X)}$, is given by
\[
\| f \|_{\mathbf{B}^{p,\alpha}(X)} = \brak{\| f \|_{L^p(X,\mu)}^p +\| f \|_{p,\alpha}^p }^{\frac1p}.
\]

\begin{lemma}[Clarkson type inequalities]
Let $f,g\in \mathbf{B}^{p,\alpha}(X)$, $1<p<\infty$, and $q$ be the H\"older conjugate of $p$. If $2\le p<\infty$, then
\begin{equation}\label{eq:CTIge2}
\norm{(f+g)/2}_{\mathbf{B}^{p,\alpha}(X)}^p + \norm{(f-g)/2}_{\mathbf{B}^{p,\alpha}(X)}^p
\le \|f\|_{\mathbf{B}^{p,\alpha}(X)}^p/2 +\|g\|_{\mathbf{B}^{p,\alpha}(X)}^p/2.
\end{equation}
If $1<p\le 2$, then
\begin{equation}\label{eq:CTIle2}
\norm{(f+g)/2}_{\mathbf{B}^{p,\alpha}(X)}^{q} + \norm{(f-g)/2}_{\mathbf{B}^{p,\alpha}(X)}^{q}
\le \brak{\|f\|_{\mathbf{B}^{p,\alpha}(X)}^p/2 +\|g\|_{\mathbf{B}^{p,\alpha}(X)}^p/2}^{q-1}.
\end{equation}
\end{lemma}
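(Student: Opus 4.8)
The plan is to derive both inequalities from the classical Clarkson inequalities on ordinary $L^p$ spaces, exploiting the fact that the equivalent norm
\[
\| f \|_{\mathbf{B}^{p,\alpha}(X)}^p = \| f \|_{L^p(X,\mu)}^p + \| f \|_{p,\alpha}^p
\]
is assembled entirely out of $L^p$-norms. Write $F=(f+g)/2$ and $G=(f-g)/2$, and treat the two summands $\|\cdot\|_{L^p(X,\mu)}^p$ and $\|\cdot\|_{p,\alpha}^p$ separately. The $L^p(X,\mu)$ summand is immediate: applying the known Clarkson inequalities directly in $L^p(X,\mu)$ to the pair $f,g$ gives $\|F\|_{L^p}^p+\|G\|_{L^p}^p\le\tfrac12\|f\|_{L^p}^p+\tfrac12\|g\|_{L^p}^p$ when $p\ge2$, and the corresponding $q$-exponent version when $1<p\le2$. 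So the whole issue is the seminorm summand.

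For the seminorm, I would fix $t>0$ and work on the product space $X\times X$ with the finite measure $d\pi_t(x,y)=p_t(y,dx)\,d\mu(y)$, using the representation from \eqref{heat kernel measure} in the form $\int_X P_t(|h-h(y)|^p)(y)\,d\mu(y)=\int_{X\times X}|h(x)-h(y)|^p\,d\pi_t(x,y)$. The key structural point is that the difference map $Dh(x,y):=h(x)-h(y)$ is \emph{linear}, so $DF=(Df+Dg)/2$ and $DG=(Df-Dg)/2$; hence the classical Clarkson inequalities apply verbatim in $L^p(X\times X,\pi_t)$ to the pair $Df,Dg$. Setting $\Phi_h(t):=t^{-\alpha p}\int_X P_t(|h-h(y)|^p)(y)\,d\mu(y)$ and multiplying through by $t^{-\alpha p}$, this yields, for each fixed $t$ and $p\ge2$,
\[
t^{-\alpha p}\int_X P_t(|F-F(y)|^p)(y)\,d\mu(y) + t^{-\alpha p}\int_X P_t(|G-G(y)|^p)(y)\,d\mu(y) \le \tfrac12\,\Phi_f(t)+\tfrac12\,\Phi_g(t),
\]
together with the $q$-exponent analogue for $1<p\le2$. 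The combination of the $L^p(X,\mu)$ piece with the seminorm piece is then handled by the stability of these inequalities under $\ell^p$-direct sums: for $p\ge2$ one simply adds the two $p$-Clarkson estimates, and for $1<p\le2$ one uses that the $q$-Clarkson inequality passes to $\ell^p$-sums of spaces that each satisfy it.

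The step I expect to be the main obstacle is the final passage from these per-scale inequalities to a statement about $\|\cdot\|_{p,\alpha}=(\sup_{t>0}\Phi_{(\cdot)}(t))^{1/p}$. What is needed is $\sup_t\Phi_F(t)+\sup_t\Phi_G(t)\le\tfrac12\sup_t\Phi_f(t)+\tfrac12\sup_t\Phi_g(t)$, whereas the per-scale bounds only give the weaker $\sup_t\bigl(\Phi_F(t)+\Phi_G(t)\bigr)\le\tfrac12\sup_t\Phi_f(t)+\tfrac12\sup_t\Phi_g(t)$; interchanging the supremum with the sum is not free, since the suprema defining $\|F\|_{p,\alpha}$ and $\|G\|_{p,\alpha}$ need not be attained (or even approached) at a common scale $t$. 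In the quadratic case $p=2,\ \alpha=\tfrac12$ this gap closes cleanly, because Lemma~\ref{lem:Dirich-from-Pt} shows $t\mapsto\tfrac1t\langle(I-P_t)h,h\rangle$ is decreasing, forcing all four quantities $\Phi_F,\Phi_G,\Phi_f,\Phi_g$ to attain their suprema as limits at $t\to0^+$, so the per-scale inequality simply passes to the limit. For general $p$ and $\alpha$ one would need an analogous compatibility guaranteeing that the scales realizing the various suprema can be aligned, and this is precisely the delicate point on which the argument turns and where any essential use of the structure of the heat kernel family $\{p_t\}$ must enter.
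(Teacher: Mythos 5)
Your reconstruction is, in its positive part, exactly the paper's proof: the paper disposes of this lemma in one sentence, asserting that once one passes to the equivalent norm $\left(\|f\|_{L^p(X,\mu)}^p+\|f\|_{p,\alpha}^p\right)^{1/p}$, the classical Clarkson inequalities for $L^p$ ``immediately'' transfer to $\mathbf{B}^{p,\alpha}(X)$. Your per-scale application of Clarkson in $L^p(X\times X,\pi_t)$, using linearity of the difference map, together with stability under $\ell^p$-direct sums, is precisely the substance behind that sentence. But the obstacle you isolate at the end is not a technicality you failed to overcome; it is a genuine hole, and it is exactly what the word ``immediately'' hides. In your notation $\Phi_h(t)=t^{-\alpha p}\int_X P_t(|h-h(y)|^p)(y)\,d\mu(y)$, the per-scale bounds only give $\sup_t\bigl(\Phi_F(t)+\Phi_G(t)\bigr)\le\tfrac12\sup_t\Phi_f(t)+\tfrac12\sup_t\Phi_g(t)$, whereas \eqref{eq:CTIge2} requires $\sup_t\Phi_F(t)+\sup_s\Phi_G(s)$ on the left, and nothing in the paper bridges the two.

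Moreover, the bridge cannot be built: the lemma as stated is false in general. Combined with the Milman--Pettis argument that the paper runs immediately afterwards (Corollary~\ref{lem:reflexive}), it would imply that $\mathbf{B}^{p,\alpha}(X)$ is reflexive for every Dirichlet space, every $p>1$ and every $\alpha>0$. But by the Example in Section~\ref{sec:definition}, for the standard Dirichlet form on $\mathbb{R}^n$ and $0<\alpha<1/2$ the space $\mathbf{B}^{p,\alpha}(X)$ coincides, with equivalent norms, with the Besov--Nikol'skii space $B^{2\alpha}_{p,\infty}(\mathbb{R}^n)$, and that space is not reflexive: if $\psi_k$ is $L^p$-normalized with Fourier support in $\{|\xi|\approx 2^k\}$ and $f_k=2^{-2\alpha k}\psi_k$, a standard Littlewood--Paley computation gives $\|\sum_k a_kf_k\|_{B^{2\alpha}_{p,\infty}}\approx\sup_k|a_k|$, so the closed span of $\{f_k\}$ is isomorphic to $c_0$, and no reflexive Banach space contains a closed subspace isomorphic to $c_0$. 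Since reflexivity is invariant under equivalent norms, no proof of \eqref{eq:CTIge2}--\eqref{eq:CTIle2} can exist for this space; the failure mechanism is the very one you identified, namely that the suprema of $\Phi_F$, $\Phi_G$, $\Phi_f$, $\Phi_g$ may live at incompatible scales $t$. Your remark that the case $p=2$, $\alpha=1/2$ does go through is also correct: there, by Lemma~\ref{lem:Dirich-from-Pt}, each $\Phi_h$ is monotone, all four suprema are limits as $t\to0^+$, and the per-scale (indeed parallelogram) identity passes to the limit --- and this is essentially the only situation in which the sup/sum interchange is legitimate. In short, your execution is sound and your diagnosis is exactly right; the defect lies in the statement and in the paper's one-line proof, not in your attempt.
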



By Proposition~\ref{prop-Banach} and by the discussion above, we know that $\mathbf B^{p,\alpha}(X)$ is a  Banach space.
By the above Clarkson inequalities, $\mathbf B^{p,\alpha}(X)$ is uniformly convex. These, together with the 
Milman-Pettis theorem, yield the following corollary.

\begin{corollary}\label{lem:reflexive}
For any $p > 1$ and $\alpha>0$, $\mathbf B^{p,\alpha}(X)$ is a  reflexive Banach space.
\end{corollary}

\subsection{Interpolation inequalities}

%

Now we turn our attention to interpolation inequalities. This exploration is in the spirit of the classical situation, where
it is known that the classical (metric) Besov classes of functions on Euclidean spaces are obtained by interpolation
between the Lebesgue spaces $L^p$ and Sobolev spaces $W^{1,p}$; see~\cite{GKS} for analogous results
in metric setting where the measure is doubling and supports a $p$-Poincar\'e inequality.
In our general setting, we have the following basic interpolation inequalities.

\begin{proposition}\label{interpolation inequality}
Let $\theta\in [0,1]$, $1 \le q,r <+\infty$ and $\beta, \gamma >0$. Let us assume $\frac{1}{p}=\frac{\theta}{q}+\frac{1-\theta}{r}$ and $\alpha=\theta \beta+(1-\theta)\gamma$. Then, $ \B^{q,\beta}(X)\cap \B^{r,\gamma}(X) \subset \B^{p,\alpha}(X) $  and for any $f\in \B^{q,\beta}(X)\cap \B^{r,\gamma}(X)$,
\[
\|f\|_{p,\alpha} \le \|f\|_{q,\beta}^{\theta} \|f\|_{r,\gamma}^{1-\theta}.
\]
\end{proposition}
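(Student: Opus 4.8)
The plan is to prove this interpolation inequality by a direct application of Hölder's inequality in the right form, applied to the double integral defining the Besov seminorm. The key observation is that the quantity
\[
\int_X P_t(|f - f(y)|^p)(y)\, d\mu(y) = \int_X \int_X |f(x) - f(y)|^p \, p_t(y, dx)\, d\mu(y)
\]
is an integral of $|f(x) - f(y)|^p$ against the positive measure $p_t(y, dx)\, d\mu(y)$ on $X \times X$. Since $\frac1p = \frac\theta q + \frac{1-\theta}r$, I would write $|f(x) - f(y)|^p = |f(x) - f(y)|^{p\theta} \cdot |f(x) - f(y)|^{p(1-\theta)}$ and apply Hölder's inequality with the conjugate exponents $\frac{q}{p\theta}$ and $\frac{r}{p(1-\theta)}$, which are indeed conjugate precisely because of the relation between $p, q, r, \theta$. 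This yields
\[
\int_X \int_X |f(x) - f(y)|^p \, p_t(y, dx)\, d\mu(y) \le \left( \int_X \int_X |f(x)-f(y)|^q p_t(y,dx)\, d\mu(y) \right)^{p\theta/q} \left( \int_X \int_X |f(x)-f(y)|^r p_t(y,dx)\, d\mu(y) \right)^{p(1-\theta)/r}.
\]

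Next I would rewrite both factors on the right in terms of the $P_t$-notation, recognizing them as $\left(\int_X P_t(|f-f(y)|^q)(y)\, d\mu(y)\right)^{p\theta/q}$ and the analogous $r$-expression. Taking the $p$-th root, multiplying by $t^{-\alpha}$, and splitting the exponent $t^{-\alpha} = t^{-\theta\beta} \cdot t^{-(1-\theta)\gamma}$ according to $\alpha = \theta\beta + (1-\theta)\gamma$, I would distribute the powers of $t$ so that each factor carries its natural normalization:
\[
t^{-\alpha}\left(\int_X P_t(|f-f(y)|^p)(y)\, d\mu(y)\right)^{1/p} \le \left[ t^{-\beta}\left(\int_X P_t(|f-f(y)|^q)(y)\, d\mu(y)\right)^{1/q}\right]^\theta \left[ t^{-\gamma}\left(\int_X P_t(|f-f(y)|^r)(y)\, d\mu(y)\right)^{1/r}\right]^{1-\theta}.
\]
Each bracketed quantity is bounded by $\|f\|_{q,\beta}$ and $\|f\|_{r,\gamma}$ respectively, uniformly in $t$.

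Finally, I would take the supremum over $t > 0$ on both sides. Since for each fixed $t$ the right-hand side is bounded by $\|f\|_{q,\beta}^\theta \|f\|_{r,\gamma}^{1-\theta}$ independently of $t$, the supremum of the left-hand side, which is $\|f\|_{p,\alpha}$, satisfies the claimed bound. The containment $\B^{q,\beta}(X) \cap \B^{r,\gamma}(X) \subset \B^{p,\alpha}(X)$ follows immediately, since the right-hand side is finite whenever $f$ lies in both spaces. I do not anticipate a serious obstacle here; the only point requiring care is verifying that the exponents $\frac{q}{p\theta}$ and $\frac{r}{p(1-\theta)}$ are genuinely Hölder-conjugate and correctly tracking the powers of $t$ through the splitting, and handling the degenerate cases $\theta = 0$ or $\theta = 1$ (where the statement is trivial). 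The structural reason the proof works cleanly is that the Besov seminorm is, at each fixed time $t$, an $L^p$-type norm of the difference quotient against a fixed measure, so the standard interpolation between $L^q$ and $L^r$ norms applies verbatim before taking the supremum.
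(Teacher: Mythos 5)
Your proof is correct and is essentially identical to the paper's: both apply H\"older's inequality with conjugate exponents $\frac{q}{p\theta}$ and $\frac{r}{p(1-\theta)}$ to the integral $\int_X P_t(|f-f(y)|^p)(y)\,d\mu(y)$ at each fixed $t$, split $t^{-\alpha}=t^{-\theta\beta}t^{-(1-\theta)\gamma}$, and take the supremum over $t>0$. The only cosmetic difference is that you write the H\"older step explicitly against the kernel measure $p_t(y,dx)\,d\mu(y)$ on $X\times X$, whereas the paper keeps it inside the $P_t$ notation.
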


\begin{proof}
Let $f \in \B^{q,\beta}(X)\cap \B^{r,\gamma}(X) $. One has for every $t >0$
\begin{align*}
    t^{-\alpha} \left( \int_X P_t (|f-f(y)|^p)(y) d\mu(y) \right)^{1/p}  = t^{-\theta \beta-(1-\theta)\gamma} \left( \int_X P_t (|f-f(y)|^p)(y) d\mu(y) \right)^{1/p}.
\end{align*}
Then, from H\"older's inequality
\begin{align*}
\int_X P_t (|f-f(y)|^p)(y) d\mu(y) &=\int_X P_t (|f-f(y)|^{p\theta+p(1-\theta)})(y) d\mu(y)  \\
 & \le \left( \int_X P_t (|f-f(y)|^q)(y) d\mu(y) \right)^{\frac{p \theta}{q}} \left( \int_X P_t (|f-f(y)|^r)(y) d\mu(y)\right)^{\frac{p (1-\theta)}{r}}.
\end{align*}
One deduces
\begin{align*}
  & t^{-\alpha} \left( \int_X P_t (|f-f(y)|^p)(y) d\mu(y) \right)^{1/p}  \\
 \le & t^{-\theta \beta}  \left( \int_X P_t (|f-f(y)|^q)(y) d\mu(y) \right)^{\frac{ \theta}{q}}  t^{-(1-\theta)\gamma} \left( \int_X P_t (|f-f(y)|^r)(y) d\mu(y)\right)^{\frac{ 1-\theta}{r}}.
\end{align*}
Taking the supremum over $t>0$ finishes the proof.
\end{proof}

\begin{remark}
This interpolation inequality opens the door to study the (real and complex) interpolation theory of our Besov spaces. In view of the previous interpolation inequalities, it would be natural to conjecture that $ (\B^{q,\beta}(X), \B^{r,\gamma}(X))_{\theta,p} = \B^{p,\alpha}(X) $, where $0<\theta<1$, $1< q,r <+\infty$ and $\alpha, \beta, \gamma, p$ are the same as in the above proposition.
\end{remark}

By Proposition~\ref{prop:energyasbesov} we know that $\B^{2,1/2}(X)=\mathcal F$. Therefore, 
by the above interpolation inequality from Proposition~\ref{interpolation inequality}, we have the following result.


\begin{corollary}\label{duality Dirichlet}
Let $1<p \le 2$ and $q$ be its conjugate, i.e. $\frac{1}{p}+\frac{1}{q}=1$. Let $0<\alpha<1$. Then, for any $f\in \mathcal F \cap \mathbf B^{p,\alpha}(X)$ and $g\in  \mathcal F \cap \mathbf B^{q,1-\alpha}(X)$, it holds that
\[
| \Ecal(f,g) | \le \|f\|_{p,\alpha} \|g\|_{q,1-\alpha}.
\]
\end{corollary}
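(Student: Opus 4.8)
The plan is to reduce the statement to a uniform-in-$t$ estimate for the bilinear form $\frac1t\langle(I-P_t)f,g\rangle$, whose limit as $t\to0$ is $\Ecal(f,g)$, and then to split it by H\"older's inequality. First I would record the polarized form of identity~\eqref{prop:energyasbesoveq}: both $(f,g)\mapsto\frac{1}{2t}\int_X P_t\bigl((f-f(y))(g-g(y))\bigr)(y)\,d\mu(y)$ and $(f,g)\mapsto\frac1t\langle(I-P_t)f,g\rangle$ are symmetric bilinear forms on $L^2(X,\mu)$ — the latter because $P_t$ is self-adjoint — and they coincide on the diagonal by~\eqref{prop:energyasbesoveq}, so polarization gives
\[
\frac{1}{2t}\int_X P_t\bigl((f-f(y))(g-g(y))\bigr)(y)\,d\mu(y)=\frac1t\langle(I-P_t)f,g\rangle .
\]
Since $f,g\in\mathcal F$, the right-hand side tends to $\Ecal(f,g)$ as $t\to0$, by applying the polarized version of Lemma~\ref{lem:Dirich-from-Pt} to $f\pm g\in\mathcal F$.

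Next I would estimate the left-hand side uniformly in $t$. Using $|P_t(h)|\le P_t(|h|)$ and then H\"older's inequality with exponents $p,q$ applied to the probability kernel $p_t(y,\cdot)$, I obtain the pointwise bound
\[
\bigl|P_t\bigl((f-f(y))(g-g(y))\bigr)(y)\bigr|\le\bigl(P_t(|f-f(y)|^p)(y)\bigr)^{1/p}\bigl(P_t(|g-g(y)|^q)(y)\bigr)^{1/q}.
\]
Integrating in $y$ and applying H\"older once more, this time with exponents $p,q$ against $d\mu$, yields
\[
\int_X\bigl|P_t\bigl((f-f(y))(g-g(y))\bigr)(y)\bigr|\,d\mu(y)\le\Bigl(\int_X P_t(|f-f(y)|^p)(y)\,d\mu(y)\Bigr)^{1/p}\Bigl(\int_X P_t(|g-g(y)|^q)(y)\,d\mu(y)\Bigr)^{1/q}.
\]
The two factors are bounded by $t^{\alpha}\|f\|_{p,\alpha}$ and $t^{1-\alpha}\|g\|_{q,1-\alpha}$ respectively, directly from Definition~\ref{def:Besov}.

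Combining these estimates, the crucial point is that $t^{\alpha}\cdot t^{1-\alpha}=t$ exactly offsets the prefactor $\frac{1}{2t}$, giving
\[
\Bigl|\tfrac1t\langle(I-P_t)f,g\rangle\Bigr|\le\tfrac12\|f\|_{p,\alpha}\|g\|_{q,1-\alpha}
\]
for every $t>0$; letting $t\to0$ produces $|\Ecal(f,g)|\le\frac12\|f\|_{p,\alpha}\|g\|_{q,1-\alpha}$, which is even stronger than the asserted inequality. This exact cancellation of exponents is the interpolation mechanism behind the statement, and is precisely why the hypotheses pair $\alpha$ with $1-\alpha$ and $p$ with its conjugate $q$ (so that both H\"older splittings are admissible and the scales match).

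I do not anticipate a serious obstacle; the only points needing care are the justification of the polarized identity and of the limit (which requires $f,g\in\mathcal F$, guaranteed by hypothesis) and the legitimacy of the double H\"older step. For the latter one checks that $y\mapsto\bigl(P_t(|f-f(y)|^p)(y)\bigr)^{1/p}\in L^p(X,\mu)$, which holds because its $p$-th power integrates to at most $t^{p\alpha}\|f\|_{p,\alpha}^p<\infty$, and symmetrically for $g$, so that the integrals above are finite and the inequalities are valid.
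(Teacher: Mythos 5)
Your proof is correct and is essentially the argument the paper intends: the corollary is stated there as a consequence of Proposition~\ref{prop:energyasbesov} together with the H\"older mechanism from the proof of Proposition~\ref{interpolation inequality}, which is precisely your combination of the polarized identity~\eqref{prop:energyasbesoveq}, the limit from Lemma~\ref{lem:Dirich-from-Pt}, and the double H\"older estimate --- indeed the paper carries out this exact computation later, in the proof of Proposition~\ref{Critical bound Chapter 1}. Your version even yields the slightly sharper constant $\tfrac{1}{2}$, consistent with the factor $2$ appearing on the left-hand side of the analogous estimate there.
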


\subsection{Pseudo-Poincar\'e inequalities and fractional powers of the generator}

Our goal in this section is to relate our Besov spaces to the domain of some fractional powers of the generator of the Dirichlet form.
In a very general framework, one can  resort to (Hille-Yosida) spectral theory to define the fractional powers of a closed operator 
$A$ on a Banach space $D(A)$ via the following formula
\[
(-A)^s f = \frac{\sin \pi s}{\pi} \int_0^\infty \lambda^{s-1} (\lambda I - A)^{-1} (-A)f\ d\lambda,
\]
for every $f\in D(A)$. In fact, using Bochner's subordination one can express the fractional powers of $A$ also in terms of the 
heat semi-group $P_t = e^{tA}$ via the following formula, see (5) in~\cite[page~260]{Yosida},
\begin{equation}\label{As}
(-A)^s f = - \frac{s}{\Gamma(1-s)} \int_0^\infty t^{-s-1} [P_t f - f]\ dt.
\end{equation}

With $A=L$ where
$L$ is the generator of $\mathcal{E}$, we set, for $0< s \le 1$, the class $\mathcal{L}_p^s$ to be the domain of the operator 
$(-L)^s$ in $L^p(X,\mu)$, $1 \le p <\infty$. In other words, $\mathcal{L}_p^s$ consists of functions from $L^p(X,\mu)$
for which there is a function $g\in L^p(X,\mu)$ such that $(-L)^s f=g$.

The following simple pseudo-Poincar\'e inequalities that are analogs of classical Sobolev embeddings, will later play a prominent role in Section \ref{section sobolev} and in  our three subsequent papers. In this section, we will use them to  prove that the fractional operator $(-L)^s: \B^{p,\alpha}(X) \to L^p(X,\mu)$ is bounded, where $L$ is the generator of the Dirichlet form $\mathcal{E}$ and $0<s <\alpha \le 1$.

\begin{lemma}[Pseudo-Poincar\'e inequalities]\label{pseudo-Poincare}
Let $ p \ge 1$ and $\alpha >0$. Then for every $f \in \mathbf{B}^{p,\alpha} (X)$, and $t \ge 0$,
\[
\| P_t f -f \|_{L^p(X,\mu)} \le t^\alpha \| f \|_{p,\alpha}.
\]
\end{lemma}

\begin{proof}
From conservativeness of the semigroup and H\"older's inequality of Lemma~\ref{lem:Pt-Holder}, we have
\begin{align*}
 \left(\int_X | P_t f (x)-f(x)|^p d\mu(x)\right)^{1/p} & = \left(\int_X | P_t (f -f(x))(x)|^p d\mu(x)\right)^{1/p} \\
 & \le \left( \int_X P_t (|f-f(x)|^p)(x) d\mu(x) \right)^{1/p}
 \le t^\alpha\, \| f \|_{p,\alpha}.\qedhere
\end{align*}
\end{proof}

\begin{remark}
Triebel \cite{Trie} (Section 1.13.6) introduced the interpolation spaces:
\[
(L^p(X,\mu), \mathcal{E} )_{\alpha,\infty}=\left\{ u \in L^p(X,\mu)\, :\, \sup_{t >0} t^{-\alpha} \| P_t u -u \|_{L^p(X,\mu )} <+\infty \right\}.
\]

From the previous lemma, it is therefore clear that $\mathbf{B}^{p,\alpha} (X) \subset (L^p(X,\mu), \mathcal{E})_{\alpha,\infty}.$ 
However, it may not be true that $\mathbf{B}^{p,\alpha} (X) = (L^p(X,\mu), \mathcal{E})_{\alpha,\infty}$, even when 
$X=\mathbb{R}^n$, see Remark 4.5 in \cite{MPPP} and \cite{Taibleson} (Theorems 4 and 4*).
\end{remark}

The following lemma will be useful:

\begin{lemma}
Let $L$ be the generator of $\mathcal{E}$, and let $p>1$, $0 <\alpha <1$. Then, there exists a constant 
$C>0$ such that for every $f \in \mathbf{B}^{p,\alpha}(X)$ and $t \ge 0$,
\[
\| LP_t f \|_{L^p(X,\mu)} \le C \frac{\| f \|_{p,\alpha}}{t^{1-\alpha}}.
\]
\end{lemma}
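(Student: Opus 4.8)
The goal is to bound $\|LP_t f\|_{L^p}$ for $f \in \mathbf{B}^{p,\alpha}(X)$, and the natural strategy is to exploit the semigroup property together with the two estimates already available: the analyticity bound \eqref{analytic bound}, which gives $\|LP_s g\|_{L^p} \le (C/s)\|g\|_{L^p}$ for any $g \in L^p$, and the pseudo-Poincaré inequality of Lemma~\ref{pseudo-Poincare}, which controls $\|P_s f - f\|_{L^p}$ by $s^\alpha \|f\|_{p,\alpha}$. The plan is to write $P_t = P_{t/2} P_{t/2}$ and apply $L$ so that $LP_t f = LP_{t/2}(P_{t/2} f)$; the idea is to feed into the analytic bound \emph{not} $P_{t/2}f$ itself but the difference $P_{t/2}f - f$, which the pseudo-Poincaré inequality knows how to estimate.

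Concretely, first I would observe that since $P_s$ fixes constants by conservativeness, we have $LP_s 1 = 0$, and hence $LP_{t/2}f = LP_{t/2}(f - f) $ is not quite the right manipulation; instead the key point is that $L P_{t/2}$ annihilates the constant part, so more usefully one writes
\[
LP_t f = LP_{t/2}\bigl(P_{t/2} f\bigr) = LP_{t/2}\bigl(P_{t/2} f - f\bigr) + LP_{t/2} f,
\]
which is circular. The clean route is to use that $LP_{t/2}$ kills constants only trivially; the correct observation is that for the generator of a conservative semigroup $LP_s c = 0$ for constant $c$, so we may subtract any \emph{function} whose image under $LP_{t/2}$ we can control. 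I would therefore estimate directly
\[
\| LP_t f\|_{L^p} = \bigl\| LP_{t/2}(P_{t/2}f - f)\bigr\|_{L^p} \le \frac{C}{t/2}\, \| P_{t/2} f - f\|_{L^p},
\]
using that $LP_{t/2} f = LP_{t/2}(P_{t/2}f)$ would require a further argument — the honest identity is that $LP_t f - LP_{t/2} f = LP_{t/2}(P_{t/2}f - f)$, so one must first dispense with the term $LP_{t/2}f$ by an iteration or by noting it is absorbed in the limit. I expect the cleanest implementation replaces $f$ by $P_{t/2}f$ inside the analytic bound after writing $LP_t f = LP_{t/2}\,(P_{t/2}f)$ and then using $\|P_{t/2}f\|_{L^p}$ is not small; thus the genuine trick is to apply \eqref{analytic bound} to the \emph{increment}.

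The correct and simple argument is as follows. Apply the analytic estimate with $g = P_{t/2}f - f$:
\[
\| LP_{t/2}(P_{t/2}f - f)\|_{L^p} \le \frac{2C}{t}\,\|P_{t/2}f - f\|_{L^p} \le \frac{2C}{t}\,(t/2)^\alpha \|f\|_{p,\alpha} = \frac{2C\,2^{-\alpha}}{t^{1-\alpha}}\|f\|_{p,\alpha},
\]
by Lemma~\ref{pseudo-Poincare}. It then remains to identify $LP_{t/2}(P_{t/2}f - f)$ with $LP_t f$; this holds precisely because $LP_{t/2}f \to 0$ is \emph{not} true, so instead I would verify the operator identity $LP_{t/2}(P_{t/2}f-f) = LP_t f - LP_{t/2}f$ and control $LP_{t/2}f$ separately — but since $LP_{t/2}$ applied to the constant-normalized part vanishes, the honest statement is that one should subtract a constant. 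Writing $c = \frac{1}{\mu(X)}\int f$ only works on finite measure spaces, so for the general case the decisive observation is that $L P_{t/2}$ annihilates constants, giving $L P_{t/2} f = L P_{t/2}(f - c)$ for any constant $c$, which does not immediately help without the pseudo-Poincaré increment.

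The main obstacle, therefore, is purely the algebraic one of arranging the semigroup identity so that the quantity being fed to \eqref{analytic bound} is exactly the increment $P_{t/2}f - f$ controlled by the pseudo-Poincaré inequality, rather than $P_{t/2}f$ whose $L^p$ norm gives no decay. I expect the resolution is the identity $LP_t f = LP_{t/2}\,P_{t/2} f$ combined with the fact that $L$ applied to the \emph{semigroup difference} is what appears; the cleanest phrasing uses $\frac{d}{ds}P_s f = LP_s f$ and $P_{t/2}f - f = \int_0^{t/2} LP_s f\, ds$, but since we want an upper bound on $LP_t f$ and not a representation of the increment, the direct substitution $g = P_{t/2}f - f$ into \eqref{analytic bound} together with the operator identity $LP_t = LP_{t/2}P_{t/2}$ and conservativeness ($LP_{t/2}$ kills the constant so $LP_{t/2}P_{t/2}f = LP_{t/2}(P_{t/2}f - f) + LP_{t/2}f$) forces me to iterate or to bound $LP_{t/2}f$ by the same quantity, which I expect closes up to the stated constant. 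Getting this bookkeeping right — ensuring the $t^{-(1-\alpha)}$ decay rather than $t^{-1}$ — is the crux, and everything else is a direct application of the two cited inequalities.
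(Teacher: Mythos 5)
You have correctly assembled the two ingredients the proof needs --- the analyticity bound \eqref{analytic bound} and the pseudo-Poincar\'e inequality of Lemma~\ref{pseudo-Poincare} --- and the correct one-step identity $LP_{2s}f - LP_{s}f = LP_{s}(P_{s}f-f)$, whose right-hand side is bounded by $\frac{C}{s}\,s^{\alpha}\|f\|_{p,\alpha} = C s^{\alpha-1}\|f\|_{p,\alpha}$. But the crux of the lemma is precisely the step you leave unresolved: how to get rid of the leftover term $LP_{s}f$ after applying this identity. Saying you would ``iterate or note it is absorbed in the limit'' is not a proof, and in fact the iteration your setup suggests goes the wrong way. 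If you halve repeatedly, i.e.\ write $LP_tf = LP_{t/2}f + LP_{t/2}(P_{t/2}f-f)$ and then expand $LP_{t/2}f$, you accumulate the terms $\sum_{k\ge 1} C\,(t/2^{k})^{\alpha-1}\|f\|_{p,\alpha}$, and since $\alpha<1$ this series \emph{diverges}; moreover the boundary term $LP_{t/2^{n}}f$ does not vanish as $n\to\infty$ --- for $f\in\mathbf{B}^{p,\alpha}(X)$ not in the domain of $L$ it typically blows up as the time parameter goes to $0$. So the bookkeeping you flag as ``the crux'' genuinely fails in the direction you set it up.

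The paper's proof resolves this by telescoping toward $t=+\infty$ rather than toward $0$. By \eqref{analytic bound}, $\|LP_{s}f\|_{L^p(X,\mu)} \le \frac{C}{s}\|f\|_{L^p(X,\mu)} \to 0$ as $s\to\infty$, so one may write
\begin{equation*}
LP_{2t}f \;=\; -\sum_{k=1}^{\infty}\bigl( LP_{2^{k}t}f - LP_{2^{k-1}t}f \bigr)
\;=\; -\sum_{k=1}^{\infty} LP_{2^{k-1}t}\bigl(P_{2^{k-1}t}f - f\bigr),
\end{equation*}
the vanishing of the boundary term at infinity being exactly what justifies the infinite telescoping. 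Each summand is then bounded, via \eqref{analytic bound} and Lemma~\ref{pseudo-Poincare}, by $C\,(2^{k-1}t)^{\alpha-1}\|f\|_{p,\alpha}$, and now the geometric series $\sum_{k\ge1} 2^{(k-1)(\alpha-1)}$ \emph{converges} precisely because $\alpha<1$, yielding $\|LP_{2t}f\|_{L^p(X,\mu)} \le C\,t^{\alpha-1}\|f\|_{p,\alpha}$ as claimed. In short: the missing idea is that the hypothesis $\alpha<1$ forces you to sum the increments over doubling times going to infinity (where analyticity kills the boundary term), not over halving times going to zero.
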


\begin{proof}
%
By the analyticity of the semigroup $P_t$,
see \eqref{analytic bound},  it follows that $\lim_{t \to +\infty} \| LP_t f \|_{L^p(X,\mu)} =0$ for $1<p<\infty$.
Then, we have by the semigroup property of $\{P_t\}_{t\in[0,\infty)}$ that
\begin{align*}
\| LP_{2t} f \|_{L^p(X,\mu)}  =\left\| \sum_{k=1}^{\infty}( LP_{2^k t} f -  LP_{2^{k-1} t} f )  \right \|_{L^p(X,\mu)} 
 & \le  \sum_{k=1}^{\infty} \left\| LP_{2^k t} f -  LP_{2^{k-1} t} f  \right \|_{L^p(X,\mu)} \\
 & \le \sum_{k=1}^{\infty} \left\| LP_{2^{k-1} t} (P_{2^{k-1}t} f -  f)  \right \|_{L^p(X,\mu)} \\
 &\le \sum_{k=1}^{\infty} \frac{1}{2^{k-1} t}  \left\| P_{2^{k-1}t} f -  f  \right \|_{L^p(X,\mu)} \\
 &\le C  \sum_{k=1}^{\infty} \frac{(2^{k-1} t)^{\alpha}}{2^{k-1} t}  \| f \|_{p,\alpha}  \\
 &\le C \frac{\| f \|_{p,\alpha}}{t^{1-\alpha}},
\end{align*}
where we used the analyticity of $P_t$ in the third inequality and the pseudo-Poincar\'e inequality in the fourth.
\end{proof}

One has then the following proposition:

\begin{proposition}
Let $\alpha \in (0,1]$, $p \ge 1$ and $0<s<\alpha$. Then
\[
\B^{p,\alpha}(X) \subset \mathcal{L}_p^s,
\]
and there exists a constant $C=C_{s,\alpha}$ such that for every $f\in \B^{p,\alpha}(X)$,
\begin{equation}\label{eq:Young}
\| (-L)^s f \|_{L^p(X,\mu)} \le C \|  f \|^{1-\frac{s}{\alpha}}_{L^p(X,\mu)} \| f \|_{p,\alpha}^{\frac{s}{\alpha}}.
\end{equation}
In particular, $(-L)^s: \B^{p,\alpha}(X) \to L^p(X,\mu)$ is bounded.
\end{proposition}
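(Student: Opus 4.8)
The proof will use the subordination formula \eqref{As} for $(-L)^s$ together with the two estimates just proved: the pseudo-Poincaré inequality (Lemma~\ref{pseudo-Poincare}), which controls $\|P_tf-f\|_{L^p}$ for \emph{small} $t$, and the preceding lemma bounding $\|LP_tf\|_{L^p}$, which will be used to control the behavior for \emph{large} $t$. First I would write, from \eqref{As},
\[
\| (-L)^s f \|_{L^p(X,\mu)} \le \frac{s}{\Gamma(1-s)} \int_0^\infty t^{-s-1} \| P_t f - f \|_{L^p(X,\mu)}\, dt,
\]
and split the integral at a threshold $\tau>0$ to be optimized at the end.

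**The two regimes.**
On the range $t \in (0,\tau]$ I would apply Lemma~\ref{pseudo-Poincare} to get $\|P_tf-f\|_{L^p}\le t^\alpha\|f\|_{p,\alpha}$, so the integrand is bounded by $t^{\alpha-s-1}\|f\|_{p,\alpha}$; since $s<\alpha$ the exponent $\alpha-s-1>-1$ is integrable at $0$, and the integral over $(0,\tau]$ contributes a constant times $\tau^{\alpha-s}\|f\|_{p,\alpha}$. On the range $t\in[\tau,\infty)$ I would instead bound $\|P_tf-f\|_{L^p}$ crudely: writing $P_tf-f=\int$ is tempting, but the clean route is to use the preceding lemma via
\[
\| P_t f - f \|_{L^p(X,\mu)} = \Bigl\| \int_t^\infty L P_u f \, du \Bigr\|_{L^p(X,\mu)} \le \int_t^\infty \| L P_u f \|_{L^p(X,\mu)}\, du \le C \int_t^\infty \frac{\|f\|_{p,\alpha}}{u^{1-\alpha}}\, du,
\]
but this diverges, so instead I would simply use the contraction bound $\|P_tf-f\|_{L^p}\le 2\|f\|_{L^p}$ on $[\tau,\infty)$. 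Then the integrand is $\le 2t^{-s-1}\|f\|_{L^p}$, which is integrable at $\infty$ (since $s>0$), contributing a constant times $\tau^{-s}\|f\|_{L^p}$.

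**Optimizing and the main obstacle.**
Combining the two pieces yields
\[
\| (-L)^s f \|_{L^p(X,\mu)} \le C_1 \tau^{\alpha-s} \|f\|_{p,\alpha} + C_2 \tau^{-s}\|f\|_{L^p(X,\mu)}.
\]
Optimizing over $\tau>0$ (balancing the two terms, i.e.\ taking $\tau\simeq \bigl(\|f\|_{L^p}/\|f\|_{p,\alpha}\bigr)^{1/\alpha}$) produces exactly the Young-type interpolation bound \eqref{eq:Young} with exponents $1-\tfrac{s}{\alpha}$ and $\tfrac{s}{\alpha}$. The membership $f\in\mathcal{L}_p^s$ follows because the integral defining $(-L)^sf$ converges absolutely in $L^p$ once both pieces are finite, and boundedness of $(-L)^s:\B^{p,\alpha}(X)\to L^p$ is immediate from \eqref{eq:Young} together with $\|f\|_{p,\alpha}\le\|f\|_{\mathbf{B}^{p,\alpha}(X)}$ and $\|f\|_{L^p}\le\|f\|_{\mathbf{B}^{p,\alpha}(X)}$. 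The one point requiring care—the main obstacle—is justifying that \eqref{As} actually represents $(-L)^sf$ for $f$ merely in $\B^{p,\alpha}(X)$ rather than a priori in the domain of $L$; the absolute convergence of the split integral, established above, is precisely what legitimizes using the subordination formula as the \emph{definition} of $(-L)^sf$ here, so I would phrase the argument so that finiteness of the integral both proves $f\in\mathcal{L}_p^s$ and yields the estimate simultaneously. A minor subtlety is that for $p=1$ the analyticity bound \eqref{analytic bound} and the preceding lemma are stated only for $1<p<\infty$; but since the large-$t$ regime uses only the contraction bound $\|P_tf-f\|_{L^p}\le 2\|f\|_{L^p}$, the argument goes through for $p=1$ as well without invoking analyticity.
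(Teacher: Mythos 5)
Your proposal is correct and follows essentially the same route as the paper's own proof: both start from the subordination formula \eqref{As}, split the integral at a threshold, bound the small-$t$ piece by the pseudo-Poincar\'e inequality of Lemma~\ref{pseudo-Poincare} and the large-$t$ piece by the $L^p$-contraction bound $\|P_tf-f\|_{L^p(X,\mu)}\le 2\|f\|_{L^p(X,\mu)}$, and then optimize the threshold to obtain \eqref{eq:Young}. Your observation that finiteness of the split integral simultaneously establishes $f\in\mathcal{L}_p^s$ and the estimate, and that the argument avoids analyticity so it covers $p=1$, matches the paper's treatment.
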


\begin{proof}
Let  $f \in \B^{p,\alpha}(X)$. We need to prove that the integral $x\mapsto \int_0^\infty t^{-s-1} (P_t f(x) - f(x))\ dt$ is finite
for almost every $x\in X$, 
and therefore that $f \in \mathcal{L}_p^s$. For $\delta >0$, one has
\begin{align*}
 \left\| \int_0^\infty t^{-s-1} (P_t f - f)\ dt \right\|_{L^p(X,\mu)} & \le  \int_0^\infty t^{-s-1}  \| P_t f - f \|_{L^p(X,\mu)}  dt \\
  & \le \int_0^\delta t^{-s-1}  \| P_t f - f \|_{L^p(X,\mu)}  dt +  \int_\delta^\infty t^{-s-1}  \| P_t f - f \|_{L^p(X,\mu)}  dt  \\
  & \le \| f \|_{p,\alpha} \int_0^\delta t^{-s-1 +\alpha}   dt +2 \| f \|_{L^p(X,\mu)} \int_\delta^\infty t^{-s-1}    dt \\
  & \le \| f \|_{p,\alpha} \frac{\delta^{\alpha-s}}{\alpha -s}+2 \| f \|_{L^p(X,\mu)} \frac{\delta^{-s}}{s}.
\end{align*}
Choosing $\delta=1$ in the above shows the boundedness of $(-L)^s$.
To see~\eqref{eq:Young}, we choose $\delta>0$ that satisfies
\[
\delta^\alpha=2\frac{\|f\|_{L^p(X,\mu)}}{\|f\|_{p,\alpha}}\, \frac{\alpha-s}{s}
\]
so that 
\[
\| f \|_{p,\alpha} \frac{\delta^{\alpha-s}}{\alpha -s}=2 \| f \|_{L^p(X,\mu)} \frac{\delta^{-s}}{s}.
\]
Then
\begin{align*}
\frac{\Gamma(1-s)}{s}\| (-L)^s f \|_{L^p(X,\mu)}&=\left\| \int_0^\infty t^{-s-1} (P_t f - f)\ dt \right\|_{L^p(X,\mu)}\\
&\le 2 \| f \|_{L^p(X,\mu)} \frac{\delta^{-s}}{s}\\
&=\frac{2^{2-s/\alpha}}{s^{1-s/\alpha}(\alpha-s)^{s/\alpha}}\, \|f\|_{L^p(X,\mu)}^{1-s/\alpha}\, \|f\|_{p,\alpha}^{s/\alpha}.\qedhere
\end{align*}
\end{proof}



\section{Continuity of $P_t$ on the Besov spaces and critical exponents}

Our goal in this section is to study the continuity properties of the semigroup $P_t$ in the Besov spaces $\B^{p,\alpha}(X)$ with range $1<p \le 2$ and parameter $\alpha=\frac{1}{2}$. As corollaries we will deduce several important properties of the Besov spaces themselves. In particular, we will obtain the non-trivial fact that for $1 <  p \le 2$, the Besov space $\B^{p,1/2}(X)$ contains the $L^p(X,\mu)$ domain of $L$.

As before, throughout the section, let $(X,\mu,\mathcal{E},\mathcal{F})$ be a Dirichlet space and let $\{P_{t}\}_{t\in[0,\infty)}$ denote the associated  heat semigroup.

\subsection{Continuity}

The main result of this section quantifies a regularization property of the heat semigroup as follows.

\begin{theorem}\label{continuity Besov chapter 1}
Let $1<p\le 2$. There exists a constant $C_p>0$ such that for every $f \in L^p(X,\mu)$ and $t \ge 0$
\[
\| P_t f \|_{p,1/2} \le \frac{C_p}{t^{1/2}} \| f \|_{L^p(X,\mu)}.
\]
In particular $P_t: L^p(X,\mu) \to \B^{p,1/2}(X)$ is bounded for $t>0$.
\end{theorem}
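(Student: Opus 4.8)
The plan is to prove the quantitative estimate
\[
\Phi_{P_tf}(s)\le C_p\,s^{p/2}t^{-p/2}\|f\|_{L^p(X,\mu)}^p\qquad\text{for every }s>0,
\]
where $\Phi_g(s):=\int_X P_s(|g-g(y)|^p)(y)\,d\mu(y)$; dividing by $s^{p/2}$ and taking the supremum over $s>0$ then gives $\|P_tf\|_{p,1/2}\le C_p^{1/p}t^{-1/2}\|f\|_{L^p(X,\mu)}$ directly, with no appeal to the locality-in-time Lemma~\ref{Lemma limsup debut}. Write $g=P_tf$. The whole point is that for $p=2$ the quantity $\Phi_g(s)$ is exactly $2\langle(I-P_s)g,g\rangle$ and the estimate is immediate from the spectral/analyticity bound; for $1<p<2$ one must replace this quadratic identity by an honest $L^p$ argument, and this is where the difficulty lies.

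First I would linearize $\Phi_g(s)$. Using the elementary inequality valid for $1<p\le2$,
\[
|a-b|^p\le C_p\big[(a-b)(a|a|^{p-2}-b|b|^{p-2})\big]^{p/2}(|a|^p+|b|^p)^{(2-p)/2},\qquad a,b\in\R,
\]
which follows from the standard monotonicity inequality $(a-b)(a|a|^{p-2}-b|b|^{p-2})\ge(p-1)(a-b)^2(|a|+|b|)^{p-2}$, applied pointwise with $a=g(x)$, $b=g(y)$, and then Hölder's inequality with exponents $\tfrac2p$ and $\tfrac2{2-p}$ against the measure $p_s(y,dx)\,d\mu(y)$, I obtain
\[
\Phi_g(s)\le C_p\Big(\iint (g(x)-g(y))\big(v(x)-v(y)\big)\,p_s(y,dx)\,d\mu(y)\Big)^{p/2}\Big(\iint (|g(x)|^p+|g(y)|^p)\,p_s(y,dx)\,d\mu(y)\Big)^{(2-p)/2},
\]
where $v:=g|g|^{p-2}=\operatorname{sgn}(g)\,|g|^{p-1}$ is well defined and lies in $L^{p'}(X,\mu)$ with $\|v\|_{L^{p'}(X,\mu)}=\|g\|_{L^p(X,\mu)}^{p-1}$. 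By conservativeness together with \eqref{heat kernel measure symmetry}, the second factor equals $(2\|g\|_{L^p(X,\mu)}^p)^{(2-p)/2}$, while the symmetry \eqref{symmetry P} and conservativeness turn the first double integral into the linear pairing $2\langle(I-P_s)g,v\rangle$, via the identity $\langle(I-P_s)u,w\rangle=\tfrac12\iint (u(x)-u(y))(w(x)-w(y))\,p_s(y,dx)\,d\mu(y)$ that already underlies the $p=2$ case.

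It then remains to estimate the pairing. By Hölder, $\langle(I-P_s)g,v\rangle\le\|(I-P_s)P_tf\|_{L^p(X,\mu)}\,\|g\|_{L^p(X,\mu)}^{p-1}$, and the decisive gain comes from the analyticity of the semigroup: writing $(I-P_s)P_tf=-\int_0^s LP_{t+u}f\,du$ and invoking \eqref{analytic bound} gives $\|(I-P_s)P_tf\|_{L^p(X,\mu)}\le\int_0^s\frac{C}{t+u}\|f\|_{L^p(X,\mu)}\,du\le C\,\frac{s}{t}\,\|f\|_{L^p(X,\mu)}$ for every $s>0$. Substituting back and using the $L^p$-contractivity $\|g\|_{L^p(X,\mu)}\le\|f\|_{L^p(X,\mu)}$, the powers of $\|f\|_{L^p(X,\mu)}$ combine to $p$ and those of $s$ and $t$ combine to $s^{p/2}t^{-p/2}$, which is exactly the target bound. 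The main obstacle is the first step: one needs the correct $L^p$ substitute for the quadratic identity, and the elementary inequality above is what lets us trade the nonlinear quantity $\Phi_g(s)$ for the linear pairing $\langle(I-P_s)P_tf,\,g|g|^{p-2}\rangle$ that analyticity can control. The restriction $1<p\le2$ enters in two places: through this inequality, whose constant degenerates as $p\to1$, and through the analyticity estimate \eqref{analytic bound}, which holds only for $1<p<\infty$.
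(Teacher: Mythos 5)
Your proof is correct, and at the top level it has the same architecture as the paper's: both reduce the theorem to the interpolation-type inequality
\[
\left(\int_X P_s(|g-g(y)|^p)(y)\,d\mu(y)\right)^{1/p}\le C_p\,\|g\|_{L^p(X,\mu)}^{1/2}\,\|(I-P_s)g\|_{L^p(X,\mu)}^{1/2},
\]
which is then applied to $g=P_tf$ and combined with the analyticity bound \eqref{analytic bound}; indeed, your exponent bookkeeping $p(p-1)/2+p(2-p)/2=p/2$ shows that your two factors recombine into exactly this inequality, which is the paper's Lemma \ref{Lemma interpolation}. Where you genuinely diverge is in how this inequality is proved. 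The paper establishes it (for non-negative $f$ only, whence the splitting $f=f^+-f^-$ in its proof of the theorem) by following Dungey and Chen: the functional $\gamma_p(\alpha,\beta)=p\alpha(\alpha-\beta)-\alpha^{2-p}(\alpha^p-\beta^p)$, the nonnegative object $\Gamma_p$, the auxiliary semigroup $e^{-s\Delta_t}$ generated by $\Delta_t=I-P_t$, and a limit $s\to0^+$ at the end. You replace all of that machinery by the scalar monotonicity inequality $(a-b)(|a|^{p-2}a-|b|^{p-2}b)\ge(p-1)(a-b)^2(|a|+|b|)^{p-2}$ (valid for $1<p\le2$ by integrating $\phi'(t)=(p-1)|t|^{p-2}\ge(p-1)(|a|+|b|)^{p-2}$ between $b$ and $a$), H\"older with exponents $2/p$ and $2/(2-p)$ (legitimate because the pairing term is pointwise nonnegative by monotonicity), and the identity $\iint(g(x)-g(y))(v(x)-v(y))\,p_s(y,dx)\,d\mu(y)=2\langle(I-P_s)g,v\rangle$ with $v=|g|^{p-2}g\in L^{p'}$; the four-term expansion behind that identity converges absolutely since $gv=|g|^p\in L^1$ and the cross terms are bounded by $\|g\|_{L^p(X,\mu)}\|v\|_{L^{p'}(X,\mu)}$, and conservativeness together with \eqref{symmetry P} and \eqref{heat kernel measure symmetry} give the stated evaluations, so every step checks out. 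Your route buys three things: it is elementary and self-contained (no auxiliary semigroup, no limiting procedure), it treats signed $f$ directly without the positive/negative-part reduction, and it isolates the only semigroup inputs as symmetry, conservativeness, and analyticity. What the paper's route buys is contact with the $\Gamma_p$-calculus of Dungey and Chen, a tool with uses beyond this lemma; as for constants, both arguments degenerate as $p\to1$, so neither loses anything there.
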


It is remarkable that Theorem \ref{continuity Besov chapter 1} applies to any Dirichlet space $(X,\mu, \mathcal{E},\mathcal{F})$.  We will see in \cite{ABCRST2, ABCRST3} that the study of the continuity of the semigroup in the Besov spaces $\B^{p,\alpha}(X)$ with range $p > 2$ requires additional assumptions on the space (weak Bakry-\'Emery type curvature condition).

To prove this theorem we need the following auxiliary result. The proof of this auxiliary result is obtained 
from some deep ideas originally due to Nick Dungey~\cite{Dungey} and developed further by Li Chen in~\cite{Chen}.

\begin{lemma}\label{Lemma interpolation}
Let $1<p \le 2$. There exists a constant $C_p>0$ such that for every non-negative $f \in L^p(X,\mu)$ and $t >0$
\[
 \left( \int_X P_t (|f-f(y)|^p)(y) d\mu(y) \right)^{1/p} \le C_p \| f \|^{1/2}_{L^p(X,\mu)} \| P_t f -f \|^{1/2}_{L^p(X,\mu)}.
\]
\end{lemma}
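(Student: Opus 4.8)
The plan is to reduce the estimate to a single elementary pointwise inequality and then exploit the symmetry of the heat kernel measure, in the spirit of the $p=2$ computation behind Proposition~\ref{prop:energyasbesov}. Throughout I write $d\Pi_t(x,y)=p_t(y,dx)\,d\mu(y)$, which by \eqref{heat kernel measure symmetry} is a \emph{symmetric} measure on $X\times X$, and set $I_p(t)=\int_X\int_X |f(x)-f(y)|^p\,d\Pi_t(x,y)=\int_X P_t(|f-f(y)|^p)(y)\,d\mu(y)$, so that the claim is $I_p(t)^{1/p}\le C_p\|f\|_{L^p(X,\mu)}^{1/2}\|P_tf-f\|_{L^p(X,\mu)}^{1/2}$.

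\textbf{Step 1 (the crux).} I would first establish the pointwise inequality: there is $c_p>0$ so that for all $a,b\ge 0$,
\[
|a-b|^p \le c_p\big[(a-b)(a^{p-1}-b^{p-1})\big]^{p/2}\,(a^p+b^p)^{1-p/2}.
\]
This is the substantive input, the ``deep idea'' attributed to Dungey and Chen: it trades the full $p$-th power of the difference for the smaller \emph{energy integrand} $E(a,b):=(a-b)(a^{p-1}-b^{p-1})\ge 0$ (raised to the power $p/2$) at the cost of an $L^p$ factor. Both sides are homogeneous of degree $p$, so after normalizing $b=1$ it reduces to checking that $(s-1)^p\big/\big(E(s,1)^{p/2}(s^p+1)^{1-p/2}\big)$ stays bounded on $[1,\infty)$; the only delicate points are the limits $s\to 1^+$ and $s\to\infty$, where a Taylor expansion and a leading-order comparison respectively give finite positive values, and the quotient is continuous and positive in between. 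This is precisely where $1<p\le 2$ enters: the exponent $1-p/2$ must be nonnegative, and strict positivity of $p-1$ keeps $E$ from degenerating.

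\textbf{Step 2 (insert and apply H\"older).} Integrating the pointwise inequality against $d\Pi_t$ (using $f\ge0$, so $E(f(x),f(y))\ge0$) and applying H\"older's inequality with the conjugate exponents $2/p$ and $2/(2-p)$ gives
\[
I_p(t)\le c_p\Big(\int_X\int_X E(f(x),f(y))\,d\Pi_t\Big)^{p/2}\Big(\int_X\int_X (f(x)^p+f(y)^p)\,d\Pi_t\Big)^{(2-p)/2}.
\]
For the second factor, symmetry of $\Pi_t$ together with conservativeness and \eqref{symmetry P} give $\int_X\int_X(f(x)^p+f(y)^p)\,d\Pi_t = 2\|f\|_{L^p(X,\mu)}^p$. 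For the first, expanding $E$ and using the same symmetry yields the bilinear identity
\[
\int_X\int_X E(f(x),f(y))\,d\Pi_t = 2\langle (I-P_t)f,\,f^{p-1}\rangle,
\]
the exact analogue of \eqref{prop:energyasbesoveq} with the test function $f^{p-1}$ in place of $f$. H\"older (with exponents $p$ and $p/(p-1)$), the $L^p$-contractivity of $P_t$, and $\|f^{p-1}\|_{L^{p/(p-1)}(X,\mu)}=\|f\|_{L^p(X,\mu)}^{p-1}$ then bound this by $2\|P_tf-f\|_{L^p(X,\mu)}\|f\|_{L^p(X,\mu)}^{p-1}$.

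\textbf{Step 3 (combine).} Substituting both bounds into Step 2 and tracking exponents finishes the proof: the powers of $\|f\|_{L^p(X,\mu)}$ sum to $\tfrac{(p-1)p}{2}+\tfrac{p(2-p)}{2}=\tfrac p2$, while $\|P_tf-f\|_{L^p(X,\mu)}$ appears to the power $p/2$, so $I_p(t)\le C_p^p\,\|f\|_{L^p(X,\mu)}^{p/2}\,\|P_tf-f\|_{L^p(X,\mu)}^{p/2}$, and taking $p$-th roots is the claim. I expect Step~1 to be the only genuine obstacle; everything after it is symmetry bookkeeping and H\"older. As a consistency check, the argument degenerates exactly at the excluded endpoint $p=1$, where $f^{p-1}\equiv 1$ forces $\langle (I-P_t)f,f^{p-1}\rangle=0$ (and $E\equiv0$), correctly reflecting that no such bound can hold there.
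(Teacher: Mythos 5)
Your proof is correct, and it takes a genuinely different --- and more self-contained --- route than the paper's. The paper follows the Dungey--Chen machinery: it introduces the non-symmetric quantity $\gamma_p(\alpha,\beta)=p\alpha(\alpha-\beta)-\alpha^{2-p}(\alpha^p-\beta^p)$ and the pointwise functional $\Gamma_p(f)(x)=\int_X\gamma_p(f(x),f(y))\,p_t(x,dy)$, quotes from Li Chen's work a two-sided comparison of $\gamma_p(\alpha,\beta)+\gamma_p(\beta,\alpha)$ with $(\alpha-\beta)^2$, reduces $\iint|f(x)-f(y)|^p$ to $\int\Gamma_p^{p/2}(f)\,d\mu$ via Jensen's inequality with respect to the probability measures $p_t(x,dy)$, and then --- this is the step you avoid entirely --- runs the auxiliary semigroup $e^{-s\Delta_t}$ generated by $\Delta_t=I-P_t$, setting $u(s,\cdot)=e^{-s\Delta_t}f$ so that $\Gamma_p(u)=u^{2-p}J$ with $J=-(\partial_s+\Delta_t)u^p\ge0$; H\"older on the base space together with the identity $\int J\,d\mu=p\int u^{p-1}\Delta_t u\,d\mu$ gives the bound for $u(s,\cdot)$, and the proof finishes by letting $s\to0^+$. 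You replace all of this with a single homogeneous pointwise inequality for $E(a,b)=(a-b)(a^{p-1}-b^{p-1})$ and one H\"older application on the product space with respect to the symmetric measure $\Pi_t$; after that the two proofs converge, since your bilinear identity $\iint E\,d\Pi_t=2\langle(I-P_t)f,f^{p-1}\rangle$ is precisely the integrated form of the paper's $\int J\,d\mu=p\langle\Delta_t u,u^{p-1}\rangle$ (with $u$ replaced by $f$), and both are then estimated by $\|P_tf-f\|_{L^p(X,\mu)}\|f\|_{L^p(X,\mu)}^{p-1}$ in the same way. Your Step 1 is indeed the only substantive input and it is correct: the normalized quotient $(s-1)^{p/2}\big/\bigl[(s^{p-1}-1)^{p/2}(s^p+1)^{1-p/2}\bigr]$ extends continuously to $[1,\infty]$ with values $\bigl((p-1)^{p/2}2^{1-p/2}\bigr)^{-1}$ and $1$ at the endpoints, hence is bounded (the degenerate case $b=0$ should be noted separately, but it is trivial by homogeneity, and one should also record that the four terms in the expansion of $\iint E\,d\Pi_t$ are individually finite --- by H\"older the cross terms are at most $\|f\|_{L^p(X,\mu)}^p$ --- so the splitting is legitimate). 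What your route buys: no auxiliary semigroup, no limiting argument $s\to 0^+$, and no external citation. What the paper's route buys: it retains the pointwise objects $\Gamma_p$ and $J$, which plug into the Dungey--Chen framework (originally built for Riesz transform estimates) and remain usable when pointwise rather than integrated control is needed, whereas your argument only ever sees quantities integrated over $X\times X$.
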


\begin{proof}
Let $1 <p \le 2$ and $t>0$ be fixed in the following proof. The constant $C$ in the following will denote a positive constant depending only on $p$ that may change from line to line. 
For $\alpha,\beta \ge 0$, set 
\[
\gamma_p(\alpha,\beta):=p\alpha(\alpha-\beta)-\alpha^{2-p} (\alpha^p-\beta^p)
\]
and for a non-negative function $f \in L^p(X,\mu)$
\begin{align*}
\Gamma_p(f)(x) 
&:=p f(x) \int_X (f(x)-f(y))\, p_t(x,dy) - f^{2-p}(x) \int_X \brak{f^p(x)-f^p(y)} \,p_t(x,dy)
\\&= \int_X \gamma_p(f(x),f(y))\, p_t(x,dy).
\end{align*}
Note that from~\cite[Lemma~3.5]{LiChen}, one has for any $\alpha,\beta \ge 0$ 
\[
(p-1) (\alpha -\beta)^2 \le \gamma_p(\alpha,\beta)+\gamma_p(\beta,\alpha) \le p (\alpha-\beta)^2
\]
and that, similarly to \cite[page 122]{Dungey}, one has $\Gamma_p(f) \ge 0$.
Then the same argument as in \cite[Lemma~3.6]{LiChen} gives
\begin{align*}
\lefteqn{\int_X\int_X  |f(x)-f(y)|^p\, p_t(x,dy) \,d\mu(x)} \quad&\\
&\le 
 C \int_X\int_X \brak{\gamma_p(f(x),f(y))+\gamma_p(f(y),f(x))}^{p/2} p_t(x,dy)\,d\mu(x)
\\&\le 
C \int_X\int_X \brak{\gamma_p^{p/2}(f(x),f(y))+\gamma_p^{p/2}(f(y),f(x))} p_t(x,dy) \,d\mu(x)
\\&= 
C\int_X\int_X \gamma_p^{p/2}(f(x),f(y))\, p_t(x,dy)\, d\mu(x)
\\&\le
C\int_X\brak{\int_X  \gamma_p(f(x),f(y))\, p_t(x,dy)}^{p/2} d\mu(x)
\\&= 
C\int_X \Gamma_p^{p/2}(f)(x)\, d\mu(x).
\end{align*}
Here the fourth line follows from the symmetry property of heat kernel measure in \eqref{heat kernel measure symmetry}.
Denote $\Delta_t=I -P_t$. Then $\Delta_t$ is the generator of a strongly continuous semigroup $\{e^{-s\Delta_t}\}_{s\in[0,\infty)}$ on $L^p(X,\mu)$ given by $e^{-s\Delta_t} =\sum_{n=0}^{\infty} \frac{s^n}{n!} (P_t-I)^n$. We then follow the proof of Theorem~1 in \cite{LiChen} (see also Theorem 1.3 in  \cite{Dungey}) by taking $u(s,x)=e^{-s\Delta_t} f (x)$. Note that
\begin{align*}
\Gamma_p(u)& =pu (  u - P_t u) -u^{2-p} (u^p -P_t (u^p)) \\
 & =pu \Delta_t u  -u^{2-p} \Delta_t (u^p) \\
 &=-pu \partial_s u  -u^{2-p} \Delta_t (u^p) \\
 & =-u^{2-p} \left( \partial_s +\Delta_t \right)u^p.
\end{align*}
Set now
\[
J(s,x)=- \left( \partial_s +\Delta_t \right)u^p(s,x),
\]
so that
\[
\Gamma_p(u)=u^{2-p} J.
\]
Note that since $u \ge 0$ and $\Gamma_p(u) \ge 0$, one has $J \ge 0$.
One has then from H\"older's inequality
\begin{align*}
\int_X \Gamma_p^{p/2}(u) d\mu & =\int_X u^{p(2-p)/2} J^{p/2} d\mu \\
 & \le \left( \int_X u^p d\mu \right)^{\frac{2-p}{2}} \left( \int_X J d\mu \right)^{p/2}.
\end{align*}
Observe that $u\in L^p(X,\mu)$ and hence $u^p\in L^1(X,\mu)$. Then $\int P_t (u^p)d\mu=\int u^p P_t1d\mu=\int u^pd\mu$ by symmetry and the conservative property of $P_t$.  It follows that  $\int_X \Delta_t u^p d\mu=0$.
One computes then
\[
\int_X J d\mu =-\int_X \left( \partial_s +\Delta_t \right)u^p(s,x) d\mu =- \int_X \partial_s (u^p) d\mu=-p \int_X u^{p-1} \partial_s u d\mu=p \int_X u^{p-1} \Delta_t u d\mu.
\]
Thus, we have from H\"older's inequality
\[
\int_X J d\mu \le p \| u \|_{L^p(X,\mu)}^{p-1}  \| \Delta_t u \|_{L^p(X,\mu)} .
\]
From the definition of $\Delta_t$ one concludes therefore
\[
\left( \int_X\int_X  |u(s,x)-u(s,y)|^p \,p_t(x,dy)\, d\mu(x) \right)^{1/p} \le C \| u(s,\cdot) \|^{1/2}_{L^p(X,\mu)} \| P_t u(s,\cdot) -u(s,\cdot) \|^{1/2}_{L^p(X,\mu)}.
\]
Letting $s \to 0^+$ yields 
\begin{equation*}
\left( \int_X\int_X |f(x)-f(y)|^p \,p_t(x,dy)\, d\mu(x) \right)^{1/p} \le C \| f \|^{1/2}_{L^p(X,\mu)} \| P_t f -f \|^{1/2}_{L^p(X,\mu)}.\qedhere
\end{equation*}
\end{proof}

We are now ready to prove Theorem \ref{continuity Besov chapter 1}.

\begin{proof}[\textbf{Proof of Theorem \ref{continuity Besov chapter 1}}]
Let $f \in L^p(X,\mu)$. We can assume $f \ge 0$. If not, it is enough to decompose $f$ as $f^+-f^-$ with $f^+=\max \{ f ,0 \}$ and $f^-=\max \{ -f ,0 \}$. Let $s,t >0$, applying Lemma \ref{Lemma interpolation} to $P_s f$, one obtains
\[
 \left( \int_X P_t (|P_sf-P_sf(y)|^p)(y) d\mu(y) \right)^{1/p} \le C_p \| P_s f \|^{1/2}_{L^p(X,\mu)} \| P_{t+s} f -P_sf \|^{1/2}_{L^p(X,\mu)}.
\]
Note that $\| P_s f \|_{L^p(X,\mu)} \le \|  f \|_{L^p(X,\mu)}$ and that
\begin{align*}
\| P_{t+s} f -P_sf \|_{L^p(X,\mu)} & = \left\| \int_0^t LP_{s+u} f  du \right\|_{L^p(X,\mu)} \\
 & =\left\| \int_0^t P_u LP_{s} f  du \right\|_{L^p(X,\mu)}  \\
 &  \le \int_0^t \left\|  P_u LP_{s} f \right\|_{L^p(X,\mu)} du \\
 & \le t  \left\|   LP_{s} f \right\|_{L^p(X,\mu)}  \\
 & \le C \frac{t}{s}  \left\|    f \right\|_{L^p(X,\mu)},
\end{align*}
where in the last step we used analyticity of the semigroup. One concludes
\[
 \left( \int_X P_t (|P_sf-P_sf(y)|^p)(y) d\mu(y) \right)^{1/p} \le C \left(  \frac{t}{s} \right)^{1/2} \|  f \|_{L^p(X,\mu)}.
\]
Dividing both sides by $\sqrt{t}$ and taking the supremum over $t>0$ complete the proof.
\end{proof}

We now collect several corollaries of Theorem \ref{continuity Besov chapter 1}. The following surprising result shows that when $p \ge 2$,  the quantity $$\sup_{t >0} t^{-1/2} \| P_t f -f \|_{L^p(X,\mu)}$$ can actually always  be controlled by   $$\liminf_{t \to 0^+}  t^{-1/2} \left( \int_X P_t (|f-f(y)|^p)(y) d\mu(y) \right)^{1/p}.$$
This is another manifestation of the locality in time property of our Besov spaces (see also Section \ref{locality in time section}).

\begin{proposition}\label{Critical bound Chapter 1}
Let $ 2 \le p <+\infty  $. For every $f \in L^p(X,\mu)$, and $t \ge 0$,
\[
\| P_t f -f \|_{L^p(X,\mu)} \le C_p   t^{1/2}  \liminf_{s \to 0}  s^{-1/2} \left( \int_X P_s (|f-f(y)|^p)(y) d\mu(y) \right)^{1/p}.
\]
\end{proposition}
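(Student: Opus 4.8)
The plan is to argue by duality, passing to the conjugate exponent $q$ of $p$ (so $1<q\le 2$), precisely so that the regularization estimate of Theorem~\ref{continuity Besov chapter 1} becomes available. Writing $\|P_tf-f\|_{L^p(X,\mu)}=\sup\{\langle P_tf-f,g\rangle:\|g\|_{L^q(X,\mu)}\le1\}$, the first step is a \emph{single-increment estimate}: for all $a,b>0$ and $f\in L^p(X,\mu)$,
\[
\|P_bf-P_{a+b}f\|_{L^p(X,\mu)}\le C_q\Big(\tfrac ab\Big)^{1/2}\Big(\int_X P_a(|f-f(y)|^p)(y)\,d\mu(y)\Big)^{1/p}.
\]
To obtain this I would use the polarization identity
\[
\int_X P_a\big((f-f(y))(h-h(y))\big)(y)\,d\mu(y)=2\langle (I-P_a)f,h\rangle,
\]
proved exactly as in Proposition~\ref{prop:energyasbesov} via symmetry and conservativeness, applied with $h=P_bg$. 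Hölder's inequality with exponents $p,q$ in the double integral, together with \eqref{heat kernel measure symmetry}, bounds the right-hand side by the product of the $p$-difference quotient of $f$ at time $a$ and the $q$-difference quotient of $P_bg$ at time $a$; the latter is at most $a^{1/2}\|P_bg\|_{q,1/2}\le C_q(a/b)^{1/2}\|g\|_{L^q(X,\mu)}$ by Theorem~\ref{continuity Besov chapter 1}. Since $\langle(I-P_a)f,P_bg\rangle=\langle P_bf-P_{a+b}f,g\rangle$, taking the supremum over $\|g\|_{L^q}\le1$ gives the claim.

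Second, I would telescope along a \emph{uniform} time-step. For fixed $a>0$ and an integer $n$, the identity $P_{na}f-P_af=\sum_{k=1}^{n-1}(P_{(k+1)a}f-P_{ka}f)$ combined with the increment estimate (base $b=ka$, spread $a$, so the prefactor is $k^{-1/2}$) yields
\[
\|P_{na}f-P_af\|_{L^p(X,\mu)}\le C_q\Big(\int_X P_a(|f-f(y)|^p)(y)\,d\mu(y)\Big)^{1/p}\sum_{k=1}^{n-1}k^{-1/2}.
\]
The elementary comparison $\sum_{k=1}^{n-1}k^{-1/2}\le 2\sqrt n$ is exactly what converts step-size information into the correct $t^{1/2}$-scaling: with $na\le t$ one has $\sqrt n\le \sqrt t/\sqrt a$, so the right-hand side is at most $C_q\sqrt t\,\big(a^{-1/2}(\int_X P_a(|f-f(y)|^p)(y)\,d\mu(y))^{1/p}\big)$.

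Finally, to produce the $\liminf$ I would \emph{not} choose $a=t/n$, but instead fix a sequence $a_j\to0$ realizing $L:=\liminf_{s\to0}s^{-1/2}(\int_XP_s(|f-f(y)|^p)(y)\,d\mu(y))^{1/p}$ (assuming $L<\infty$, else there is nothing to prove), and set $n_j=\lfloor t/a_j\rfloor$, so that $n_ja_j\to t$. Passing to the limit $j\to\infty$ and invoking the strong continuity \eqref{Strong continuity}, which gives $P_{a_j}f\to f$ and (by continuity of $s\mapsto P_sf$ at $s=t>0$) $P_{n_ja_j}f\to P_tf$ in $L^p$, the left-hand side tends to $\|P_tf-f\|_{L^p(X,\mu)}$ while the right-hand side tends to $C_q\sqrt t\,L$, which is the asserted bound (with $C_p:=C_q$ depending only on $p$).

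The main obstacle is precisely the presence of the $\liminf$ rather than the full seminorm $\|f\|_{p,1/2}$: the pseudo-Poincaré inequality of Lemma~\ref{pseudo-Poincare}, or any geometric telescoping, controls $\|P_tf-f\|_{L^p(X,\mu)}$ only by $\sup_{s>0}s^{-1/2}(\cdots)^{1/p}=\|f\|_{p,1/2}$, because such schemes weight all scales in $(0,t]$. The decisive feature of the single-increment estimate is that it \emph{decouples} the difference-quotient time $a$ from the base time $b$, so $a$ may be driven to $0$ along the liminf-realizing subsequence while the bases $ka$ sweep $[0,t]$; the Riemann-sum bound $\sum_{k}k^{-1/2}\le 2\sqrt n$ guarantees that this sweep costs only the factor $\sqrt t$. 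This decoupling is available only because, for $p\ge2$, the dual exponent satisfies $q\le2$, which is exactly the range in which Theorem~\ref{continuity Besov chapter 1} holds unconditionally.
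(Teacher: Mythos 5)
Your proof is correct and is essentially the paper's own argument: both dualize against $g\in L^q$ with $q\le 2$, use the polarization identity plus H\"older to bound $\langle (I-P_a)f,\,P_bg\rangle$ by the product of the $p$-difference quotient of $f$ at time $a$ and the $q$-difference quotient of $P_bg$ at time $a$, and control the latter by $C_q(a/b)^{1/2}\|g\|_{L^q(X,\mu)}$ via Theorem \ref{continuity Besov chapter 1}, which is exactly what decouples the liminf scale from the base scale. The only difference is bookkeeping: you aggregate the increments through a discrete telescoping sum (using $\sum_{k=1}^{n-1}k^{-1/2}\le 2\sqrt{n}$ and a limiting argument with $n_ja_j\to t$ plus strong continuity), whereas the paper writes $\int_X(P_tf-f)g\,d\mu=\lim_{\tau\to0^+}\int_0^t\mathcal{E}_\tau(P_sf,g)\,ds$ with $\mathcal{E}_\tau(u,v)=\tau^{-1}\int_X(P_\tau-I)u\,v\,d\mu$ and integrates $\int_0^t s^{-1/2}\,ds=2\sqrt{t}$ directly.
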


\begin{proof}
For  $\tau \in(0,\infty)$  we set
\begin{equation}\label{eq:energy-pt-Lp3}
\mathcal{E}_{\tau}(u,v):= \frac{1}{ \tau} \int_X (P_\tau -I )u\,  v d\mu.
\end{equation}
Let $f \in  L^p(X,\mu)$ and $g \in L^q(X,\mu)$ where $q$ is the conjugate exponent of $p$. We note that, 
\begin{align*}
\int_0^t \mathcal{E}_\tau (P_s f ,g) ds& = \int_0^t \frac{1}{ \tau} \int_X   (P_{s+\tau} f -P_{s} f) g  d\mu ds \\
 &= \int_X \left( \frac{1}{\tau} \int_t^{t+\tau} P_s f ds  - \frac{1}{\tau} \int_0^{\tau} P_s f ds \right) g d\mu
\end{align*}
Therefore, using the strong continuity of the semigroup in $L^p(X,\mu)$, one has for $t \ge 0$,
\[
\int_X (P_t f -f ) g d\mu = \lim_{\tau \to 0^+} \int_0^t \mathcal{E}_\tau (P_s f ,g) ds  .
\]
Note now that $\mathcal{E}_{\tau}(P_s f ,g)=\mathcal{E}_{\tau}( f,P_s g)$ and that from H\"older's inequality (applied as in the proof of Proposition \ref{interpolation inequality}) 
\begin{align*}
2 | \mathcal{E}_{\tau}(f,P_s g) | & \le \tau^{-1/2} \left( \int_X P_\tau  (|P_sg-P_sg(y)|^q)(y) d\mu(y) \right)^{1/q} \tau^{-1/2} \left( \int_X P_\tau (|f-f(y)|^p)(y) d\mu(y) \right)^{1/p} \\
 & \le  \tau^{-1/2} \left( \int_X P_\tau (|f-f(y)|^p)(y) d\mu(y) \right)^{1/p} \| P_s g \|_{q,1/2} \\
 &\le C_p  \tau^{-1/2} \left( \int_X P_\tau (|f-f(y)|^p)(y) d\mu(y) \right)^{1/p}  s^{-1/2} \| g \|_{L^q(X,\mu)}.
\end{align*}
One has therefore
\begin{align*}
\left| \int_X (P_t f -f ) g d\mu \right| \le  C_p t^{1/2} \| g \|_{L^q(X,\mu)} \liminf_{s \to 0}  s^{-1/2} \left( \int_X P_s (|f-f(y)|^p)(y) d\mu(y) \right)^{1/p},
\end{align*}
and we conclude by $L^p-L^q$ duality.
\end{proof}
One deduces:
\begin{corollary}\label{corollary 1.25}
Let $ 2 \le p <+\infty  $ and $\alpha >1/2$. If $ f \in \B^{p,\alpha} (X)$ then $\mathcal{E}(f,f)=0$.
\end{corollary}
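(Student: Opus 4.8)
The plan is to use Proposition~\ref{Critical bound Chapter 1} as the main engine: since its hypothesis $p\ge2$ is exactly our assumption, it suffices to show that the limit inferior appearing on its right-hand side vanishes, for then $\|P_tf-f\|_{L^p(X,\mu)}=0$ for every $t>0$, and one recovers $\mathcal{E}(f,f)=0$ from the spectral description of the form.

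First I would extract from $\alpha>1/2$ that
\[
\liminf_{s\to0}s^{-1/2}\left(\int_X P_s(|f-f(y)|^p)(y)\,d\mu(y)\right)^{1/p}=0.
\]
Indeed, since $f\in\mathbf{B}^{p,\alpha}(X)$, Lemma~\ref{Lemma limsup debut} guarantees that the quantity $s^{-\alpha}\left(\int_X P_s(|f-f(y)|^p)(y)\,d\mu(y)\right)^{1/p}$ has finite limit superior as $s\to0$, hence is bounded by some constant $M$ on an interval $(0,\varepsilon]$. Factoring out the extra power of $s$,
\[
s^{-1/2}\left(\int_X P_s(|f-f(y)|^p)(y)\,d\mu(y)\right)^{1/p}=s^{\alpha-1/2}\cdot s^{-\alpha}\left(\int_X P_s(|f-f(y)|^p)(y)\,d\mu(y)\right)^{1/p}\le M\,s^{\alpha-1/2},
\]
and since $\alpha-1/2>0$ the right-hand side tends to $0$ as $s\to0$. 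Thus the limit, a fortiori the limit inferior, equals $0$.

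Feeding this into Proposition~\ref{Critical bound Chapter 1} yields $\|P_tf-f\|_{L^p(X,\mu)}\le C_p\,t^{1/2}\cdot0=0$, so that $P_tf=f$ for every $t>0$. It then only remains to read off that the energy vanishes. Since $f\in L^p(X,\mu)$ with $p\ge2$, the map $t\mapsto\frac1t\langle(I-P_t)f,f\rangle$ is identically $0$; by Lemma~\ref{lem:Dirich-from-Pt} its limit as $t\to0^+$ exists and equals $\mathcal{E}(f,f)$, whence $f\in\mathcal{F}$ and $\mathcal{E}(f,f)=0$. Alternatively, the same conclusion follows directly from Proposition~\ref{prop:energyasbesov}: the identity $P_tf=f$ forces $\int_X P_t(|f-f(y)|^2)(y)\,d\mu(y)=\int_X\left(P_t(f^2)(y)-f(y)^2\right)d\mu(y)=0$ after using the symmetry~\eqref{symmetry P} and conservativeness, so $\|f\|_{2,1/2}=0$ and $2\mathcal{E}(f,f)=\|f\|_{2,1/2}^2=0$.

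The computation is essentially immediate once Proposition~\ref{Critical bound Chapter 1} is in hand, so the only genuine point requiring care is the interpretation of $\mathcal{E}(f,f)$: one must ensure that $f$ lies in the $L^2$-domain on which the form is defined, which is automatic when $\mu(X)<\infty$ since $p\ge2$ and is the implicit setting of the statement.
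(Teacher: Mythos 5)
Your proof is correct and takes essentially the same route as the paper: the paper's own argument likewise observes that $\alpha>1/2$ forces the limit inferior in Proposition~\ref{Critical bound Chapter 1} to vanish (since $s^{-1/2}(\cdot)^{1/p}\le s^{\alpha-1/2}\|f\|_{p,\alpha}$), deduces $P_tf=f$ for every $t\ge 0$, and concludes $\mathcal{E}(f,f)=0$. Your closing remark about needing $f$ to lie in the $L^2$-domain for $\mathcal{E}(f,f)$ to be meaningful is a point the paper leaves implicit, and your spelling out of the final step via Lemma~\ref{lem:Dirich-from-Pt} matches the paper's intended reasoning.
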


\begin{proof}
Indeed, for $ f \in \B^{p,\alpha} (X)$ with $\alpha >1/2$ one has
\[
 \liminf_{s \to 0}  s^{-1/2} \left( \int_X P_s (|f-f(y)|^p)(y) d\mu(y) \right)^{1/p}=0,
 \]
 so that for every $t \ge 0$, $P_t f=f$, and thus $\mathcal{E}(f,f)=0$.
\end{proof}

Our final corollary of Theorem \ref{continuity Besov chapter 1} is as follows.

\begin{prop}
Let $1<p \le 2$. Let $L$ be the generator of $\mathcal{E}$ and $\mathcal{L}_p$ be the domain of $L$ in $L^p(X,\mu)$.  Then
\[
\mathcal{L}_p \subset \B^{p,1/2}(X)
\]
and for every $f \in \mathcal{L}_p$,
\begin{equation}\label{eq:multi2}
\|f\|^2_{p,1/2} \le C \norm{ Lf}_{L^p(X,\mu)} \| f\|_{L^p(X,\mu)}.
\end{equation}
\end{prop}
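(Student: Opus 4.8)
The plan is to deduce this directly from Theorem~\ref{continuity Besov chapter 1}, exploiting that $\|\cdot\|_{p,1/2}$ is a seminorm together with a splitting of $f$ into a ``regularized'' part $P_s f$ and a ``small'' part $f-P_s f$, and then optimizing the splitting parameter $s$. First I would fix $f\in\mathcal{L}_p$ and, for a parameter $s>0$ to be chosen, write $f=P_s f+(f-P_s f)$ and use the triangle inequality for the Besov seminorm,
\[
\|f\|_{p,1/2}\le \|P_s f\|_{p,1/2}+\|f-P_s f\|_{p,1/2}.
\]
The first term is controlled immediately by Theorem~\ref{continuity Besov chapter 1}: $\|P_s f\|_{p,1/2}\le C_p s^{-1/2}\|f\|_{L^p(X,\mu)}$.

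For the second term, since $f\in\mathcal{L}_p$ we have $Lf\in L^p(X,\mu)$ and the Bochner representation $f-P_s f=-\int_0^s P_u Lf\,du$. Applying Minkowski's integral inequality for the seminorm $\|\cdot\|_{p,1/2}$ and then Theorem~\ref{continuity Besov chapter 1} to the $L^p$ function $Lf$, I would bound
\[
\|f-P_s f\|_{p,1/2}\le \int_0^s \|P_u Lf\|_{p,1/2}\,du\le C_p\|Lf\|_{L^p(X,\mu)}\int_0^s u^{-1/2}\,du=2C_p s^{1/2}\|Lf\|_{L^p(X,\mu)},
\]
the singularity $u^{-1/2}$ being integrable at $0$. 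Combining the two estimates gives, for every $s>0$,
\[
\|f\|_{p,1/2}\le C_p s^{-1/2}\|f\|_{L^p(X,\mu)}+2C_p s^{1/2}\|Lf\|_{L^p(X,\mu)},
\]
and optimizing over $s$ (taking $s\simeq \|f\|_{L^p(X,\mu)}/\|Lf\|_{L^p(X,\mu)}$) yields $\|f\|_{p,1/2}\le C\,\|f\|_{L^p(X,\mu)}^{1/2}\|Lf\|_{L^p(X,\mu)}^{1/2}$, which is~\eqref{eq:multi2} after squaring. Finiteness of the right-hand side also shows $f\in\B^{p,1/2}(X)$, giving the inclusion $\mathcal{L}_p\subset\B^{p,1/2}(X)$.

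The only delicate points are the two abstract facts about the seminorm: its subadditivity and the validity of Minkowski's integral inequality for it. Both follow because, for each fixed $t$, the map $g\mapsto (\int_X P_t(|g-g(y)|^p)(y)\,d\mu(y))^{1/p}$ is itself an $L^p$-type seminorm (as already used in the proof of Proposition~\ref{prop-Banach}), hence convex and positively homogeneous; the integral inequality then passes through the supremum over $t$ since the supremum of the integrals is at most the integral of the supremum. I expect this bookkeeping to be the main obstacle, as the rest is a one-line optimization.

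I would also remark that, for \emph{nonnegative} $f\in\mathcal{L}_p$, a more direct argument is available: Lemma~\ref{Lemma interpolation} combined with the elementary estimate $\|P_t f-f\|_{L^p(X,\mu)}\le t\|Lf\|_{L^p(X,\mu)}$ (valid for $f\in\mathcal{L}_p$ via $P_t f-f=\int_0^t P_u Lf\,du$ and the $L^p$-contractivity of $P_u$) gives
\[
t^{-1/2}\Bigl(\int_X P_t(|f-f(y)|^p)(y)\,d\mu(y)\Bigr)^{1/p}\le C_p\|f\|_{L^p(X,\mu)}^{1/2}\|Lf\|_{L^p(X,\mu)}^{1/2}
\]
uniformly in $t$, whence the claim by taking the supremum. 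The splitting argument above is preferable because it applies to signed $f$ with no extra work, thereby circumventing the nonnegativity hypothesis required by Lemma~\ref{Lemma interpolation}.
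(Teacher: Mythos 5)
Your proof is correct, and it takes a route that is recognizably parallel to, but distinct from, the paper's. The paper also reduces everything to Theorem~\ref{continuity Besov chapter 1} together with Minkowski's integral inequality for the seminorm $\|\cdot\|_{p,1/2}$ and a one-parameter optimization, but the decomposition is different: instead of splitting $f=P_sf+(f-P_sf)$ and using the Bochner formula $f-P_sf=-\int_0^s P_uLf\,du$, the paper expresses $f$ through the resolvent $R_{\lambda}f=(L-\lambda)^{-1}f=\int_0^{\infty}e^{-\lambda t}P_tf\,dt$, proves the mapping bound $\|R_{\lambda}g\|_{p,1/2}\le C\lambda^{-1/2}\|g\|_{L^p(X,\mu)}$ by integrating the estimate of Theorem~\ref{continuity Besov chapter 1} against $e^{-\lambda t}$, and then applies this to $g=(L-\lambda)f$, optimizing in $\lambda$ after the crude bound $\|(L-\lambda)f\|_{L^p(X,\mu)}\le\|Lf\|_{L^p(X,\mu)}+\lambda\|f\|_{L^p(X,\mu)}$. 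The two arguments are equally rigorous and of comparable length: yours is the ``hard cutoff'' (real-variable, K-functional style) version, which makes the origin of the product $\|f\|_{L^p(X,\mu)}^{1/2}\|Lf\|_{L^p(X,\mu)}^{1/2}$ transparent and needs nothing beyond the fundamental theorem of calculus for the semigroup, while the paper's Abel-mean version yields as a by-product the bound $\|R_{\lambda}\|_{L^p\to\B^{p,1/2}}\le C\lambda^{-1/2}$, which is of independent interest. The Minkowski ``bookkeeping'' you single out as the delicate point is used in exactly the same tacit way in the paper, in the line $\|R_{\lambda}f\|_{p,1/2}\le\int_0^{\infty}e^{-\lambda t}\|P_tf\|_{p,1/2}\,dt$, so you are on safe ground there. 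Your closing remark is also apt: the direct route via Lemma~\ref{Lemma interpolation} genuinely is blocked for signed $f$, since $f\in\mathcal{L}_p$ does not imply $f^{+},f^{-}\in\mathcal{L}_p$, and this is precisely why one must pass through Theorem~\ref{continuity Besov chapter 1}, where the positivity issue has already been absorbed.
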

\begin{proof}
Write for $\lambda >0$
\[
R_{\lambda}f=(L-\lambda)^{-1} f=\int_0^{\infty} e^{-\lambda t} P_tf dt. 
\]
Consequently
\[
\|R_{\lambda}f\|_{p,1/2} \le \int_0^{\infty} e^{-\lambda t} \|P_tf\|_{p,1/2}  dt 
\le \int_0^{\infty} e^{-\lambda t}  \frac{C}{t^{1/2}} \|f\|_{L^p(X,\mu)}  dt\le C \lambda^{-1/2} \|f\|_{L^p(X,\mu)}.
\]
It follows that 
\[
\|f\|_{p,1/2} \le C  \lambda^{-1/2} \norm{(L-\lambda)f}_p  \le C ( \lambda^{-1/2} \|Lf\|_{L^p(X,\mu)}+ \lambda^{1/2} \|f\|_{L^p(X,\mu)}).
\]
Taking $\lambda=\|Lf\|_{L^p(X,\mu)} \|f\|_{L^p(X,\mu)}^{\,-1}$ gives the result.
\end{proof}

\subsection{Critical Besov exponents}\label{section critical}

One can summarize several of our findings about the density or the triviality of our spaces $\mathbf{B}^{p,\alpha}(X)$ by introducing the notion of Besov critical exponents. 
Let $p \ge 1$.  For the space $X$ we define the $L^p$ Besov density critical exponent $\alpha_p^*(X)$ and triviality critical exponent $\alpha_p^\#(X)$ as follows:
\begin{equation*}
\alpha_p^*(X) = \sup \{ \alpha>0\,:\, \mathbf{B}^{p,\alpha}(X) \text{ is dense in } L^p(X,\mu)\}.
\end{equation*}
\[
\alpha^\#_p(X)=\sup \{ \alpha >0\, :\, \mathbf{B}^{p,\alpha}(X) \text{ contains non-constant functions} \}.
\]
Evidently $\alpha_p^*(X)\leq\alpha_p^\#(X)$. Critical Besov exponents of this and similar types have appeared in several previous works~\cite{GHL:TAMS2003,MR2743439,GuLau}.  In particular, Grigor'yan~\cite{MR2743439} points out that when  Theorem~\ref{sub gaussian intro} can be applied, we know $\mathbf{B}^{p,\alpha}(X)$ can be defined in a purely metric fashion and therefore the critical exponents are determined by the metric-measure structure of $X$ and are independent of any heat kernel. He also proves the exponent $\alpha_2^*(X)=\frac{1}{2}$ if $P_t$ is stochastically complete, see also Proposition~\ref{Besov critical exponents}(4) below.
There does not seem to be any literature on  whether  $\alpha^*(X)$ and $\alpha^\#(X)$ are distinct, but note that Gu and Lau~\cite{GuLau} gave examples of spaces and Dirichlet forms for which the Besov critical exponent  for density of $\mathbf{B}^{2,\alpha}(X)$ in $C(X)$ is strictly less than $\alpha_2^\#(X)$.

\begin{proposition}\label{Besov critical exponents}
The following are true:
\begin{enumerate}
\item  Both $p\mapsto \alpha_p^* (X)$ and $p\mapsto \alpha_p^\# (X)$ are non-increasing;
\item For $1\leq p \leq 2$ we have $\alpha_p^\# (X)\geq \alpha_p^*(X) \geq \frac{1}{2}$.
\end{enumerate}
If we assume  that $\mathcal{E}(f,f)=0$ implies $f$ constant, then we have in addition
\begin{enumerate}
\setcounter{enumi}{2}
\item If $1\leq p\leq 2$ then $\alpha_p^*(X)\leq\alpha_p^\#(X)\leq \frac1p$;
\item $\alpha^*_2(X)=\alpha_2^\#(X)=\frac12$;
\end{enumerate}
\begin{enumerate}
\setcounter{enumi}{4}
\item For $2\leq p<\infty$ one has $\alpha_p^*(X)\leq\alpha_p^\#(X)\leq \frac12$.
\end{enumerate}
Furthermore if $\DF$ is regular and the energy measure $\nu_f$ for each non-constant $f\in\mathcal{F}$ is singular to $\mu$ (as is the case on some fractals) we obtain
\begin{enumerate}
\setcounter{enumi}{5}
\item For $p>2$ one has $\alpha_p^*(X)\leq\alpha_p^\#(X)< \frac12$.
 \end{enumerate} 
\end{proposition}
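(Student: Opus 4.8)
The plan is to promote the triviality of $\mathbf{B}^{p,1/2}(X)$ for $p>2$ (Corollary~\ref{singular Kusuoka}) to triviality on an interval $(\alpha_1,1/2]$ with $\alpha_1<1/2$. Since $\mathbf{B}^{p,\beta}(X)\subset\mathbf{B}^{p,\alpha}(X)$ whenever $\beta>\alpha$ (Lemma~\ref{Lemma limsup debut}), the set of exponents for which $\mathbf{B}^{p,\alpha}(X)$ carries a non-constant function is a down-interval; Proposition~\ref{Besov critical exponents}(5) already gives $\alpha_p^\#(X)\le 1/2$, while Corollary~\ref{singular Kusuoka} excludes the endpoint $1/2$ itself. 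Hence the entire content is to exhibit a \emph{single} $\alpha_1<1/2$ for which $\mathbf{B}^{p,\alpha_1}(X)$ contains only constants; then $\alpha_p^\#(X)\le\alpha_1<1/2$.

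The strategy I would follow mirrors the proof of Corollary~\ref{singular Kusuoka}, regularised so as to reach below the endpoint. Let $f\in\mathbf{B}^{p,\alpha}(X)$ be a candidate; by Remark~\ref{rem-normalcontruction} I may take $f\ge 0$ bounded, and by Proposition~\ref{prop:convexityembed} the function $g:=f^{p/2}$ lies in $\mathbf{B}^{2,\alpha}(X)$ and, $f$ being bounded, also in $\mathbf{B}^{p,\alpha}(X)$. The quantitative heart of Theorem~\ref{thm-ac} is the bound
\[
\nu_h(E_1)\le \tfrac12\,\|h\|_{p,1/2}^2\,\mu(E_2)^{(p-2)/p},\qquad \mathbf 1_{E_1}\le \phi\le\mathbf 1_{E_2},
\]
valid for $h\in\mathbf{B}^{p,1/2}(X)\cap\mathcal F$, which yields $\nu_h\ll\mu$ and hence, under the singularity hypothesis, $\nu_h=0$ and $h$ constant. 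Although $g$ itself need not lie in $\mathcal F$ when $\alpha<1/2$, the smoothed function $P_sg$ does lie in $\mathcal F=\mathbf{B}^{2,1/2}(X)$ for every $s>0$ (Proposition~\ref{prop:energyasbesov}), so $\nu_{P_sg}$ is defined and, by hypothesis, is either $0$ or singular to $\mu$. The aim is to run the displayed estimate on $h=P_sg$, deduce $\nu_{P_sg}\ll\mu$, hence $\nu_{P_sg}=0$ and $P_sg$ constant for every $s>0$, and finally let $s\to0$ by strong continuity to force $g$, and therefore $f$, to be constant.

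The step I expect to be the main obstacle is precisely the one needed to apply the displayed estimate for $p>2$: controlling $\int_X P_t\!\left(|P_sg-(P_sg)(y)|^p\right)(y)\,d\mu(y)$ so that $t^{-1}\bigl(\int_X P_t(|P_sg-(P_sg)(y)|^p)\,d\mu\bigr)^{2/p}$ stays bounded as $t\to0$, i.e.\ an $L^p\!\to\!\mathbf{B}^{p,1/2}(X)$ smoothing estimate $\|P_sg\|_{p,1/2}\lesssim s^{-1/2}\|g\|_{L^p(X,\mu)}$. For $1<p\le2$ this is Theorem~\ref{continuity Besov chapter 1}, but for $p>2$ it is exactly the regularisation that, as flagged in the introduction to Section~\ref{section sobolev} and established in \cite{ABCRST2,ABCRST3}, fails without further hypotheses and holds only under a weak Bakry--\'Emery condition; indeed, with only the analyticity bound \eqref{analytic bound} one controls $\|P_tP_sg-P_sg\|_{L^p}$, yet for $p>2$ the seminorm $\|P_sg\|_{p,1/2}$ genuinely \emph{dominates} this difference and is not recovered from it, there being no quadratic identity as in the case $p=2$. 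Circumventing this is where the singularity hypothesis must enter quantitatively: in place of an unconditional smoothing bound, one exploits $g\in\mathbf{B}^{p,\alpha}(X)$ with $\alpha$ near $1/2$ together with the pseudo-Poincar\'e inequality (Lemma~\ref{pseudo-Poincare}) to confine the mass of $\nu_{P_sg}$ as $s\to0$, extracting the strict gap $\alpha_1<1/2$ from the degeneration of the time factor $t^{2\alpha-1}$ through the endpoint $\alpha=1/2$.
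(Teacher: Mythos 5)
Your proposal addresses only claim (6): claims (1)--(5) are taken as given, and you even invoke part (5) of the proposition you are proving, where a blind proof should instead cite Corollary~\ref{corollary 1.25}. The paper's own proof is a list of short citations -- (1) from Proposition~\ref{prop:convexityembed} and Corollary~\ref{cor:densitybelow2}, (2) from Proposition~\ref{prop:energyasbesov}, (3) from Proposition~\ref{prop:BesovCEp}, (5) from Corollary~\ref{corollary 1.25}, and (6) from Corollary~\ref{singular Kusuoka} -- so for (1)--(5) there is nothing to compare. For (6) you do make a sharp observation: read literally, triviality of $\mathbf{B}^{p,1/2}(X)$ together with the monotonicity of Lemma~\ref{Lemma limsup debut} yields only that the set of $\alpha$ with $\mathbf{B}^{p,\alpha}(X)$ non-trivial lies in $(0,\tfrac12)$, i.e.\ $\alpha_p^\#(X)\le\tfrac12$ with the supremum not attained; the paper records this as the strict inequality via the single citation of Corollary~\ref{singular Kusuoka}, whereas you set out to prove the genuinely stronger statement that $\mathbf{B}^{p,\alpha_1}(X)$ is trivial for some $\alpha_1<\tfrac12$.

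The problem is that your route to that stronger statement breaks precisely at the step you yourself flag as ``the main obstacle,'' and the obstacle is not technical but fatal in this setting. To run the displayed estimate of Theorem~\ref{thm-ac} on $h=P_sg$ you must first know $\|P_sg\|_{p,1/2}<\infty$, i.e.\ $P_sg\in\mathbf{B}^{p,1/2}(X)$. But under the hypotheses of claim (6), Corollary~\ref{singular Kusuoka} says $\mathbf{B}^{p,1/2}(X)$ consists of constants, so this finiteness is \emph{equivalent} to $P_sg$ being constant: your scheme presupposes exactly what it is trying to prove. Worse, the smoothing bound you would like, $\|P_sg\|_{p,1/2}\lesssim s^{-1/2}\|g\|_{L^p(X,\mu)}$ for $p>2$ (the analogue of Theorem~\ref{continuity Besov chapter 1}), is provably false here: it would force $P_s$ to map all of $L^p(X,\mu)$ into constant functions, and then strong continuity $P_sg\to g$ would make every function in $L^p(X,\mu)$ constant, which is absurd on any non-degenerate space. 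So on spaces satisfying the singularity hypothesis this estimate cannot hold for any $s>0$, and your closing sentence -- confining the mass of $\nu_{P_sg}$ via Lemma~\ref{pseudo-Poincare} and ``the degeneration of the time factor $t^{2\alpha-1}$'' -- names no mechanism that could replace it; as written it is a hope, not an argument. If you want claim (6) as the paper proves it, cite Corollary~\ref{singular Kusuoka} and stop; any genuinely quantitative gap $\alpha_p^\#(X)\le\alpha_1<\tfrac12$ requires space-specific input of the kind developed in \cite{ABCRST3} and does not follow from the hypotheses here by the smoothing strategy you outline.
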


\begin{proof}\ 

\begin{enumerate}
\item This is a direct application of Proposition~\ref{prop:convexityembed} and Corollary~\ref{cor:densitybelow2}.
\item We proved in  Proposition~\ref{prop:energyasbesov} that $\mathbf{B}^{2,1/2}(X)=\mathcal{F}$ which is dense in $L^2$, which proves the result for $p=2$.  For $1\leq p\leq 2$ we use Claim~1.
\item This follows from  Proposition \ref{prop:BesovCEp}.
\item Combine Claim~2 and Claim~3.
\item This follows from Corollary~\ref{corollary 1.25}.
\item This is Corollary~\ref{singular Kusuoka}.\qedhere
\end{enumerate}
\end{proof}

We now  present some conjectures on the critical exponents. We state them for $\alpha_p^\#(X)$, but similar results would be expected to hold for $\alpha_p^*(X)$.

 \begin{remark}\label{remark conjecture 1}
In view of the duality given by Corollary \ref{duality Dirichlet}, it is natural to conjecture that under suitable conditions one may have
\[
\alpha^\#_p(X)+\alpha_q^\#(X)=1
\]
if $p$ and $q$ are conjugates, i.e. satisfy $\frac{1}{p}+\frac{1}{q}=1$. 
\end{remark}

\begin{remark}
We will see in \cite{ABCRST2,ABCRST3} that for local Dirichlet forms the limit $$\alpha^\#_\infty(X)=\lim_{p \to +\infty} \alpha^\#_p(X)$$ is closely related to a H\"older regularity property in space of the heat semigroup. If the conjecture in Remark \ref{remark conjecture 1} is true, then classical interpolation theory (see Proposition \ref{interpolation inequality}) suggests that it is reasonable to expect that for every $p \ge 1$:
\[
\alpha^\#_p (X) =\frac{1}{p} +\left( 1-\frac{2}{p} \right) \alpha^\#_\infty(X).
\]
 \end{remark}
 
 \begin{example}
 For strictly local Dirichlet  forms with absolutely continuous energy measures, we will see in \cite{ABCRST2} that one generically has $\alpha^\#_p(X)=\alpha_p^*(X)=\frac{1}{2}$ for every $p \ge 1$.
 \end{example}
 
 \begin{example}
On the infinite Sierpinski gaskets    $\alpha_1^\#(X)=\frac{d_H}{d_W}$, where $d_H$ is the Hausdorff dimension of $X$ and $d_W$ its walk dimension, see \cite{ABCRST3}. Finding the exact value of $\alpha_1^\#(X)$ is, in general,  an open question; in particular it is open for Sierpinski carpets. 
 \end{example}
 
 \section{Sobolev and isoperimetric  inequalities }\label{section sobolev}

In this section, we are interested in sharp Sobolev type embeddings (the case $p=1$ corresponds to isoperimetric type results) for the Besov spaces studied in this paper. 
 
 Let $(X,\mu,\mathcal{E},\mathcal{F})$ be a Dirichlet space. Let $\{P_{t}\}_{t\in[0,\infty)}$ denote the Markovian semigroup associated with
$(X,\mu,\mathcal{E},\mathcal{F})$. Throughout the section, we shall assume that $P_t$ admits a measurable heat kernel $p_t(x,y)$ satisfying, for some
$C>0$ and $\beta >0$,
\begin{equation}\label{eq:subGauss-upper3}
p_{t}(x,y)\leq C t^{-\beta}
\end{equation}
for $\mu\times\mu$-a.e.\ $(x,y)\in X\times X$, and for each $t\in\bigl(0,+\infty \bigr)$. 
Our goal in this section is to prove  for the space $\mathbf{B}^{p,\alpha}(X)$ global Sobolev embeddings with sharp exponents and one of the main results will be the following weak-type Sobolev inequality and the corresponding isoperimetric inequality:

\begin{theorem}\label{polintro2}
Let $0<\alpha <\beta $. Let $1 \le p < \frac{\beta}{\alpha} $. There exists a constant $C_{p,\alpha} >0$ such that for every $f \in \mathbf{B}^{p,\alpha}(X) $,
\[
\sup_{s \ge 0}\, s\, \mu \left( \{ x \in X\, :\, | f(x) | \ge s \} \right)^{\frac{1}{q}} \le C_{p,\alpha} \| f \|_{p,\alpha},
\]
where $q=\frac{p\beta}{ \beta -p \alpha}$. Therefore, there exists a constant $C_{\emph{iso}} >0$, 
given in 
Proposition
\ref{prop:iso}, such that for every
subset  $E\subset X$ with $\mathbf{1}_E \in \mathbf{B}^{1,\alpha}(X) $
\[
\mu(E)^{\frac{\beta-\alpha}{\beta}} \le C_{\emph{iso}} \| \mathbf{1}_E \|_{1,\alpha}.
\]

\end{theorem}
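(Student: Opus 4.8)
The plan is to establish the weak-type Sobolev inequality by a standard "good-$\lambda$" / distributional decomposition argument, splitting each function into a bounded piece and a tail and balancing the two contributions via the heat kernel bound \eqref{eq:subGauss-upper3}. The isoperimetric consequence then follows immediately by applying the weak inequality to an indicator function.

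\medskip

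First I would reduce to a level-set estimate. Fix $f\in\mathbf{B}^{p,\alpha}(X)$ and $s>0$, and set $E_s=\{|f|\ge s\}$; the goal is to bound $s\,\mu(E_s)^{1/q}$ by $C\|f\|_{p,\alpha}$. The key is to quantify how the double integral $\int_X\int_X|f(x)-f(y)|^p p_t(x,dy)\,d\mu(x)$ controls $\mu(E_s)$ for a well-chosen time $t$. The heuristic is that for a suitable $t=t(s)$, points $x$ with $|f(x)|\ge s$ and points $y$ with $|f(y)|$ small are "separated" in the sense that $|f(x)-f(y)|\gtrsim s$, so the double integral dominates $s^p$ times a quantity of the form $\mu(E_s)\cdot(1-P_t\mathbf{1}_{E_{s/2}})$. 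Using the upper bound $p_t(x,y)\le Ct^{-\beta}$ one gets $P_t\mathbf{1}_{E_{s/2}}(x)\le Ct^{-\beta}\mu(E_{s/2})$, which is small when $t$ is chosen so that $t^{\beta}\gg\mu(E_{s/2})$. This gives a lower bound of the form
\[
\Bigl(\int_X\int_X|f(x)-f(y)|^p\,p_t(x,dy)\,d\mu(x)\Bigr)^{1/p}\ge c\,s\,\mu(E_s)^{1/p}
\]
once $t$ is taken comparable to $\mu(E_{s/2})^{1/\beta}$, up to a fixed multiplicative constant.

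\medskip

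Next I would insert the Besov seminorm. By definition of $\|f\|_{p,\alpha}$ the left side above is at most $t^{\alpha}\|f\|_{p,\alpha}$, so combining with the lower bound and substituting $t\simeq\mu(E_{s/2})^{1/\beta}$ yields
\[
s\,\mu(E_s)^{1/p}\le C\,\mu(E_{s/2})^{\alpha/\beta}\,\|f\|_{p,\alpha}.
\]
Rearranging produces $s\,\mu(E_s)^{1/p}\mu(E_{s/2})^{-\alpha/\beta}\le C\|f\|_{p,\alpha}$, and since $\mu(E_s)\le\mu(E_{s/2})$, one checks that the exponent combination $\tfrac1p-\tfrac{\alpha}{\beta}=\tfrac{\beta-p\alpha}{p\beta}=\tfrac1q$ is exactly what is needed; the hypotheses $0<\alpha<\beta$ and $p<\beta/\alpha$ guarantee $q>0$ and that the exponents are admissible. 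Taking the supremum over $s$ gives the weak-type inequality. For the isoperimetric statement I would take $f=\mathbf{1}_E$ with $E$ as in the hypothesis, so that $|f(x)|\ge s$ holds on $E$ for $0<s\le 1$; the weak inequality at $s=1$ (or $s\uparrow1$) gives $\mu(E)^{1/q}\le C\|\mathbf{1}_E\|_{1,\alpha}$ with $p=1$, $q=\frac{\beta}{\beta-\alpha}$, i.e. $\mu(E)^{(\beta-\alpha)/\beta}\le C_{\mathrm{iso}}\|\mathbf{1}_E\|_{1,\alpha}$, which is the claimed bound.

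\medskip

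The main obstacle I anticipate is making the separation estimate rigorous: the informal claim that $|f(x)-f(y)|\gtrsim s$ on a large portion of the relevant product set must be turned into an honest lower bound. The clean way is to restrict the inner integral to $y\in E_{s/2}^c=\{|f|<s/2\}$, where for $x\in E_s$ one has $|f(x)-f(y)|\ge s/2$, and then write $\int_{E_{s/2}^c}p_t(x,dy)=1-P_t\mathbf{1}_{E_{s/2}}(x)$ using conservativeness. One then needs $P_t\mathbf{1}_{E_{s/2}}(x)\le\tfrac12$ (say) for $x\in E_s$, which the kernel bound delivers provided $Ct^{-\beta}\mu(E_{s/2})\le\tfrac12$, dictating the choice of $t$. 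Care is required because the seminorm is expressed using $p_t(y,dx)$ while the estimate is naturally written with $p_t(x,dy)$; here the symmetry relation \eqref{heat kernel measure symmetry} must be invoked to interchange the roles, and one must ensure all the chosen times $t(s)$ remain in $(0,\infty)$ so that both the kernel bound and the Besov seminorm control apply uniformly.
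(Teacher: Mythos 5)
Your separation estimate is sound and is in fact a legitimate substitute for the paper's Lemma~\ref{gagliardo}: restricting the double integral to $x\in E_s$, $y\in E_{s/2}^c$, writing $\int_{E_{s/2}^c}p_t(x,dy)=1-P_t\mathbf{1}_{E_{s/2}}(x)$ by conservativeness, using the kernel bound to force $P_t\mathbf{1}_{E_{s/2}}\le \tfrac12$ once $t^\beta\simeq \mu(E_{s/2})$, and invoking the symmetry \eqref{heat kernel measure symmetry} together with the definition of $\|f\|_{p,\alpha}$ correctly yields
\[
s\,\mu(E_s)^{1/p}\le C\,\mu(E_{s/2})^{\alpha/\beta}\,\|f\|_{p,\alpha}.
\]
The paper reaches a two-level inequality of the same shape by a different route (the pseudo-Poincar\'e inequality of Lemma~\ref{pseudo-Poincare} plus ultracontractivity, applied to the truncations $f_k=(f-2^k)_+\wedge 2^k$), so up to this point your argument is a genuine, and arguably more direct, alternative.

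The gap is in the concluding step. From the displayed inequality you cannot deduce $s\,\mu(E_s)^{1/q}\le C\|f\|_{p,\alpha}$ by citing $\mu(E_s)\le\mu(E_{s/2})$: that monotonicity gives $\mu(E_{s/2})^{\alpha/\beta}\ge \mu(E_s)^{\alpha/\beta}$, i.e. it points the wrong way, and the ratio $\mu(E_{s/2})/\mu(E_s)$ can be arbitrarily large, so the weak-type bound does not follow level-by-level from yours. What is needed, and what is precisely the crux of the paper's proof of Theorem~\ref{pol}, is a self-improvement (Maz'ya truncation) argument: set $N(f)=\sup_{s>0}s\,\mu(E_s)^{1/q}$, estimate $\mu(E_{s/2})^{\alpha/\beta}\le (2/s)^{q\alpha/\beta}N(f)^{q\alpha/\beta}$, substitute into your inequality, use $\tfrac{p}{q}+\tfrac{p\alpha}{\beta}=1$ to obtain $N(f)\le C\|f\|_{p,\alpha}^{p/q}N(f)^{p\alpha/\beta}$ after taking the supremum, and then divide by $N(f)^{p\alpha/\beta}$. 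This division requires knowing $N(f)<\infty$ a priori, which is exactly the weak-$L^q$ finiteness you are trying to prove, so it must be secured separately, e.g.\ by running the argument on bounded truncations of $f$ (normal contractions, whose seminorms do not increase, cf.\ Remark~\ref{rem-normalcontruction}) and passing to the limit; the paper's dyadic quantity $M(f)=\sup_k 2^k\mu(\{f\ge 2^k\})^{1/q}$ plays this role. A final, minor point: the theorem asserts the isoperimetric inequality with the explicit constant $C_{\mathrm{iso}}$ of Proposition~\ref{prop:iso}, which the paper derives by a separate direct argument (Ledoux's method, optimizing in $t$); your deduction from the weak-type inequality at $s=1$ gives the inequality only with an unspecified constant.
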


\subsection{Weak type Sobolev inequality}\label{S:Weak_Sobolev}

We follow and adapt to our setting a general approach to Sobolev inequalities developed in \cite{BCLS} (see also  \cite{saloff2002}). The pseudo-Poincar\'e inequality proved in Lemma \ref{pseudo-Poincare} plays a fundamental role here.

\begin{lemma}\label{weak type}\label{gagliardo}
Let $1 \le p,q <+\infty$ and $\alpha >0$. There exists a constant $C_{q,\alpha,\beta}>0$ such that for every $f \in \mathbf{B}^{p,\alpha}(X) \cap L^q(X,\mu)$ and $s \ge 0$,
\[
\sup_{s \ge 0} s^{1 +q\frac{\alpha}{\beta}} \mu \left( \{ x \in X\, :\, | f(x) | > s \} \right)^{\frac{1}{p}}\le C_{q,\alpha,\beta} \| f \|_{p,\alpha} \| f \|_{L^q(X,\mu)}^{q\frac{\alpha}{\beta}}.
\]
\end{lemma}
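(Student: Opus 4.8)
The plan is to follow the classical semigroup route to weak-type Sobolev inequalities developed in \cite{BCLS,saloff2002}, whose two ingredients are an ultracontractive-type $L^q\to L^\infty$ bound extracted from the heat kernel estimate \eqref{eq:subGauss-upper3}, and the pseudo-Poincar\'e inequality of Lemma \ref{pseudo-Poincare}. The truncation $f=(f-P_tf)+P_tf$ then controls the distribution function of $f$ at each level $s$, and a judicious choice of $t=t(s)$ optimizes the resulting bound.

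First I would record the smoothing estimate: for $f\in L^q(X,\mu)$,
\[
\|P_t f\|_{L^\infty(X,\mu)}\le C^{1/q}\,t^{-\beta/q}\,\|f\|_{L^q(X,\mu)}.
\]
This follows directly from H\"older's inequality and \eqref{eq:subGauss-upper3}: writing $p_t(x,y)^{q'}\le (Ct^{-\beta})^{q'-1}p_t(x,y)$ with $q'$ the conjugate exponent of $q$, and using conservativeness $\int_X p_t(x,y)\,d\mu(y)=1$, one gets $\left(\int_X p_t(x,y)^{q'}\,d\mu(y)\right)^{1/q'}\le (Ct^{-\beta})^{1/q}$, whence pairing against $f$ via H\"older yields the claim. (Alternatively this is Riesz--Thorin interpolation between the $L^1\to L^\infty$ bound $Ct^{-\beta}$ and the $L^\infty\to L^\infty$ contraction bound.)

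Next, for fixed $s>0$ and any $t>0$ I would use the inclusion
\[
\{|f|>s\}\subset\{|f-P_tf|>s/2\}\cup\{|P_tf|>s/2\}
\]
together with Chebyshev's inequality and Lemma \ref{pseudo-Poincare} to bound the measure of the first set by $(2/s)^p\,t^{\alpha p}\,\|f\|_{p,\alpha}^p$. The role of the smoothing estimate is to make the second set empty: choosing $t=\left(2C^{1/q}\|f\|_{L^q(X,\mu)}/s\right)^{q/\beta}$ forces $\|P_tf\|_{L^\infty(X,\mu)}\le s/2$, so that $\mu(\{|P_tf|>s/2\})=0$. Substituting this value of $t$ into the bound for the first set and combining the powers of $s$ leaves
\[
\mu(\{|f|>s\})\le C'\,s^{-p(1+q\alpha/\beta)}\,\|f\|_{L^q(X,\mu)}^{q\alpha p/\beta}\,\|f\|_{p,\alpha}^p.
\]

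Finally, taking the $p$-th root and multiplying by $s^{1+q\alpha/\beta}$ gives exactly the asserted estimate, and taking the supremum over $s>0$ (the case $s=0$ being trivial) completes the proof. I do not anticipate a genuine obstacle: the argument is routine once the two ingredients are in place. The only point requiring care is the bookkeeping of the exponents of $s$, $t$, $\|f\|_{L^q(X,\mu)}$ and $\|f\|_{p,\alpha}$, so that the scaling balances correctly under the choice of $t$ and produces precisely the exponent $1+q\alpha/\beta$ on $s$.
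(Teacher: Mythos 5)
Your proof is correct and takes essentially the same route as the paper's: the same splitting $\{|f|>s\}\subset\{|f-P_tf|>s/2\}\cup\{|P_tf|>s/2\}$, the same $L^q\to L^\infty$ ultracontractive bound $\|P_tf\|_{L^\infty(X,\mu)}\le C^{1/q}t^{-\beta/q}\|f\|_{L^q(X,\mu)}$ (the paper gets it by Riesz--Thorin, you also give a direct H\"older derivation), and the pseudo-Poincar\'e inequality of Lemma~\ref{pseudo-Poincare} applied via Chebyshev. The only difference is notational: the paper fixes $t$ and sets $s=2C^{1/q}t^{-\beta/q}\|f\|_{L^q(X,\mu)}$, whereas you solve the same relation for $t=t(s)$; the exponent bookkeeping in your final bound is accurate.
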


\begin{proof}
We adapt an argument given in the proof of Theorem 9.1 in \cite{BCLS}.
Let $f \in \mathbf{B}^{p,\alpha}(X) $ and denote
\[
F(s)=\mu \left( \{ x \in X\, :\, | f(x) | > s \} \right).
\]
We have then
\[
F(s) \le \mu \left( \{ x \in X\, :\, | f(x) -P_t f (x) | > s/2 \} \right)+\mu \left( \{ x \in X\, :\, | P_t f (x) | > s/2 \} \right).
\]
Now, from the heat kernel upper bound $p_t(x,y) \le C t^{-\beta}$,  $ t>0$,
one deduces, for $g \in L^1(X,\mu)$, that $|P_t g (x) | \le Ct^{-\beta} \| g \|_{L^1(X,\mu)}.$
Since $P_t$ is a contraction in $L^\infty(X,\mu)$, by the Riesz-Thorin interpolation one obtains
\[
| P_t f (x) | \le \frac{C^{1/q}}{t^{\beta /q}} \| f \|_{L^q(X,\mu)}.
\]
Therefore, for $s= 2 \frac{C^{1/q}}{t^{\frac{\beta}{ q}}} \| f \|_{L^q(X,\mu)}$, one has $\mu \left( \{ x \in X\, :\, | P_t f (x) | > s/2 \} \right)=0.$
On the other hand, from  Theorem~\ref{pseudo-Poincare},
\[
\mu \left( \{ x \in X\, :\, | f(x) -P_t f (x) | > s/2 \} \right) \le 2^p s^{-p} t^{p\alpha} \| f \|^{p}_{p,\alpha}.
\]
We conclude that
\begin{equation*}
F(s)^{1/p} \le \widetilde{C} s^{-1 -q\frac{\alpha}{\beta}} \| f \|_{\alpha,p} \| f \|_{L^q(X,\mu)}^{q\frac{\alpha}{\beta}}.\qedhere
\end{equation*}
\end{proof}

As a corollary, we are now ready to prove the weak Sobolev inequality.

\begin{theorem}\label{pol}
Let $0<\alpha <\beta $. Let $1 \le p < \frac{\beta}{\alpha} $. There exists a constant $C_{p,\alpha,\beta} >0$ such 
that for every $f \in \mathbf{B}^{p,\alpha}(X) $,
\[
\sup_{s \ge 0}\, s\, \mu \left( \{ x \in X\, :\, | f(x) | \ge s \} \right)^{\frac{1}{q}} \le C_{p,\alpha,\beta} \| f \|_{p,\alpha},
\]
where $q=\frac{p\beta}{ \beta -p \alpha}$.
\end{theorem}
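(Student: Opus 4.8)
The plan is to derive this weak-$L^{q}$ (i.e.\ $L^{q,\infty}$) bound from the Gagliardo--Nirenberg-type estimate of Lemma~\ref{gagliardo}, with the auxiliary integrability exponent in that lemma taken to be $p$, by means of Maz'ya's dyadic truncation trick. Since $|f|$ is a normal contraction of $f$ (Remark~\ref{rem-normalcontruction}) and the quantity to be bounded depends only on $|f|$, I may assume $f\ge 0$. For $k\in\Z$ put $A_k=\{x\in X:\ f(x)>2^k\}$, $a_k=\mu(A_k)$, and introduce the truncated layers
\[
f_k=\min\{(f-2^k)_+,\,2^k\}.
\]
Each $f_k$ is again a normal contraction of $f$, so $\|f_k\|_{p,\alpha}\le\|f\|_{p,\alpha}$; moreover $0\le f_k\le 2^k$ with $f_k$ supported in $A_k$, which gives $\|f_k\|_{L^p(X,\mu)}^p\le 2^{kp}a_k$, while $f_k=2^k$ on $A_{k+1}$, which gives $A_{k+1}\subset\{f_k>2^{k-1}\}$.

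First I would apply Lemma~\ref{gagliardo} to $f_k$ at the threshold $s=2^{k-1}$ and insert the two elementary bounds above. Setting $\theta:=q/p=\beta/(\beta-p\alpha)>1$, the exponent identities $1+q\alpha/\beta=\theta$ and $q\alpha/\beta=\theta-1$ (which hold precisely because $q=p\beta/(\beta-p\alpha)$) collapse the estimate, once everything is expressed through $b_k:=2^k a_k^{1/q}$, into the single recursion
\[
b_{k+1}^{\theta}\le C\,\|f\|_{p,\alpha}\,b_k^{\theta-1},\qquad C=C_{p,\alpha,\beta}.
\]
Since $\sup_k b_k$ is comparable to $\sup_{s\ge 0}s\,\mu(\{f\ge s\})^{1/q}$ by a routine dyadic comparison, the theorem reduces to bounding $\sup_k b_k$ by a constant multiple of $\|f\|_{p,\alpha}$.

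The heart of the matter is to close this recursion, and this is the step I expect to be the main obstacle. The clean observation is that \emph{once} $B:=\sup_k b_k$ is known to be finite and positive (the case $B=0$ being trivial), taking the supremum over $k$ of both sides and using $\sup_k b_{k+1}=B$ yields $B^{\theta}\le C\|f\|_{p,\alpha}B^{\theta-1}$, and dividing by $B^{\theta-1}\in(0,\infty)$ gives exactly $B\le C\|f\|_{p,\alpha}$. The delicate point is thus the a priori finiteness of $B$: it does not follow from $f\in L^p$ alone, because the embedding $L^p\hookrightarrow L^{p,\infty}$ forces $b_k\to0$ only as $k\to-\infty$ and leaves $b_k$ possibly unbounded as $k\to+\infty$.

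To secure finiteness I would first run the whole argument for the bounded function $g=f\wedge M$, which is a normal contraction of $f$, so $\|g\|_{p,\alpha}\le\|f\|_{p,\alpha}$. For such $g$ one has $a_k(g)=0$ whenever $2^k\ge M$, which removes the $k\to+\infty$ problem, while $L^p\hookrightarrow L^{p,\infty}$ still forces $b_k(g)\to0$ as $k\to-\infty$; hence $\sup_k b_k(g)<\infty$ automatically, and the supremum trick gives $\sup_k b_k(g)\le C\|f\|_{p,\alpha}$ with $C$ independent of $M$. Finally, for each fixed $k$ one has $a_k(f\wedge M)=a_k(f)$ as soon as $M>2^k$, so $b_k(f)\le C\|f\|_{p,\alpha}$ for every $k$; taking the supremum over $k$ and translating the dyadic bound back to arbitrary thresholds $s$ completes the proof.
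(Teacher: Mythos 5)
Your proof is correct and follows essentially the same route as the paper's: the same Maz'ya-type dyadic layers $f_k=(f-2^k)_+\wedge 2^k$, the same key estimate (Lemma~\ref{gagliardo}, which rests on the pseudo-Poincar\'e inequality and ultracontractivity), and the same closing device of taking a supremum over the dyadic scale and dividing out a power of that supremum. Two points differ. First, you invoke Lemma~\ref{gagliardo} with auxiliary integrability exponent $p$, via $\|f_k\|_{L^p(X,\mu)}^p\le 2^{kp}a_k$, whereas the paper uses auxiliary exponent $1$, via $\|f_k\|_{L^1(X,\mu)}\le 2^k\mu(\{f\ge 2^k\})$; thanks to the identity $\frac1q=\frac1p-\frac{\alpha}{\beta}$ both choices collapse to the same scale-invariant recursion $b_{k+1}^{\theta}\le C\|f\|_{p,\alpha}b_k^{\theta-1}$ (I checked that the powers of $2^k$ cancel exactly in your version), so this difference is immaterial. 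Second, and this is a genuine refinement, you explicitly secure the a priori finiteness of $B=\sup_k b_k$ by first running the argument on the bounded truncation $f\wedge M$ and then letting $M\to\infty$. The paper's proof divides by $M(f)^{p\alpha/\beta}$, where $M(f)=\sup_k 2^k\mu(\{f\ge 2^k\})^{1/q}$, without verifying that $M(f)<\infty$; as you correctly observe, membership in $L^p$ alone only controls the dyadic quantities as $k\to-\infty$ and not as $k\to+\infty$, so this step needs exactly the truncation-and-limit argument you supply, with constants independent of $M$. Your write-up thus patches a small gap that the paper's own proof leaves implicit.
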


\begin{proof}
Let $f \in \mathbf{B}^{p,\alpha}(X) $ be a non-negative function. For $k \in \mathbb{Z}$, we denote
\[
f_k=(f-2^k)_+ \wedge 2^k.
\]
Observe that $f_k \in L^p(X,\mu)$ and $\| f_k \|_{L^p(X,\mu)} \le \| f \|_{L^p(X,\mu)}$. Moreover, for every $x,y \in X$, 
$|f_k (x)-f_k(y)|\le |f(x)-f(y)|$ and so $\| f_k\|_{p,\alpha} \le \| f\|_{p,\alpha}$. We also note that $f_k \in L^1(X,\mu)$, with
\[
\| f_k \|_{L^1(X,\mu)} =\int_X |f_k| d\mu \le 2^k \mu (\{ x \in X\, :\, f(x) \ge 2^k \}).
\]
We now use Lemma \ref{gagliardo} to deduce:
\begin{align*}
\sup_{s \ge 0} s^{1 +\frac{\alpha}{\beta}} \mu \left( \{ x \in X\, :\, f_k(x) > s \} \right)^{\frac{1}{p}}
 & \le C_{\alpha,\beta} \| f_k \|_{p,\alpha} \| f_k \|_{L^1(X,\mu)}^{\frac{\alpha}{\beta}} \\
 & \le C_{\alpha,\beta}\| f_k \|_{p,\alpha} \left( 2^k \mu (\{ x \in X\, :\, f(x) \ge 2^k \})\right)^{\frac{\alpha}{\beta}}.
\end{align*}
In particular, by choosing $s=2^k$ we obtain
\[
2^{k \left(1 +\frac{\alpha}{\beta} \right)} \mu \left( \{ x \in X\, :\, f(x) \ge 2^{k+1} \} \right)^{\frac{1}{p}}
 \le C_{\alpha,\beta} \| f_k \|_{p,\alpha} \left( 2^k \mu (\{ x \in X\, :\, f(x) \ge 2^k \})\right)^{\frac{\alpha}{\beta}}.
\]
Let
\[
M(f)=\sup_{k \in \mathbb{Z}} 2^k \mu (\{ x \in X\, :\, f(x) \ge 2^k \})^{1/q},
\]
where $q=\frac{p\beta}{ \beta-p \alpha}$. Using the fact that $\frac{1}{q}=\frac{1}{p}-\frac{\alpha}{\beta}$ and the 
previous inequality we obtain:
\[
2^{k} \mu \left( \{ x \in X\, :\, f(x) \ge 2^{k+1} \} \right)^{\frac{1}{p}}
 \le 2^{ -\frac{kq\alpha}{\beta}} C_{\alpha,\beta}\| f \|_{p,\alpha} M(f)^{\frac{q\alpha}{\beta}}.
\]
and
\[
2^k \mu \left( \{ x \in X\, :\, f(x) \ge 2^{k+1} \} \right)^{\frac{1}{q}}
 \le C^{\frac{p}{q} }_{\alpha,\beta} \| f \|_{p,\alpha}^{p/q}M(f)^{\frac{p\alpha}{\beta}}.
\]
Therefore
\[
M(f)^{1-\frac{p\alpha}{\beta}} \le 2 C^{\frac{p}{q} }_{\alpha,\beta} \| f \|_{p,\alpha} ^{p/q}.
\]
One concludes
\[
M(f) \le 2^{q/p} C_{\alpha,\beta} \| f \|_{p,\alpha} .
\]
This easily yields:
\[
\sup_{s \ge 0} s \mu \left( \{ x \in X\, :\, f(x) \ge s \} \right)^{\frac{1}{q}} \le 2^{1+q/p} C_{\alpha,\beta} \| f \|_{p,\alpha} .
\]
Let now $f \in \mathbf{B}^{p,\alpha}(X) $, which is not necessarily non-negative. From the previous inequality applied to $|f|$, we deduce
\begin{equation*} 
\sup_{s \ge 0}\, s\, \mu \left( \{ x \in X\, :\, |f(x) | \ge s \} \right)^{\frac{1}{q}} 
\le 2^{1+q/p} C_{\alpha,\beta} \| | f | \|_{p,\alpha} 
  \le 2^{1+q/p} C_{\alpha,\beta} \| f \|_{p,\alpha}.\qedhere
\end{equation*} 
\end{proof}

\subsection{Capacitary estimates}

It is well-known that Sobolev inequalities are related to capacitary estimates, see for instance \cite[Section 10]{BCLS}. In the current subsection, we explore this relation in our case. Let $p \ge 1$ and $0< \alpha <\beta$. For a measurable set $A \subset X$, we define its $(\alpha,p)$-capacity:
\[
\mathbf{Cap}^\alpha_p (A)=\inf \{ \| f \|_{\alpha,p}^p\, :\, f \in \mathbf{B}^{\alpha,p}(X), \mathbf{1}_A \le f \le 1 \}.
\]

We have the following corollary:

\begin{corollary}
Let $0<\alpha <\beta $. Let $1 \le p < \frac{\beta}{\alpha} $. There exists a constant $C_{p,\alpha}>0$ such that for every measurable set $A \subset X$,
\[
\mu(A)^{1-\frac{p\alpha}{\beta}}\le C_{p,\alpha} \mathbf{Cap}^\alpha_p (A).
\]
\end{corollary}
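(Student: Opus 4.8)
The plan is to obtain this estimate as an immediate consequence of the weak Sobolev inequality of Theorem~\ref{pol}, evaluated at the single threshold $s=1$ and applied to the functions admissible in the definition of the capacity $\mathbf{Cap}^\alpha_p(A)$.

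First I would fix a measurable set $A$ and take any $f \in \mathbf{B}^{p,\alpha}(X)$ with $\mathbf{1}_A \le f \le 1$; if no such $f$ exists, then $\mathbf{Cap}^\alpha_p(A)=+\infty$ and there is nothing to prove. Since $f \ge 1$ on $A$, we have the inclusion $A \subseteq \{x \in X : |f(x)| \ge 1\}$ (up to a $\mu$-null set), so that $\mu(A) \le \mu(\{x : |f(x)| \ge 1\})$. Applying Theorem~\ref{pol} and retaining only the term corresponding to $s=1$ in the supremum yields
\[
\mu(A)^{1/q} \le \mu\bigl(\{x : |f(x)| \ge 1\}\bigr)^{1/q} \le C_{p,\alpha,\beta}\, \|f\|_{p,\alpha},
\]
where $q=\frac{p\beta}{\beta-p\alpha}$.

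Next I would raise both sides to the power $p$ and use the identity $\frac{p}{q}=1-\frac{p\alpha}{\beta}$ (equivalently the relation $\frac{1}{q}=\frac{1}{p}-\frac{\alpha}{\beta}$ already exploited in the proof of Theorem~\ref{pol}) to obtain $\mu(A)^{1-p\alpha/\beta} \le C_{p,\alpha,\beta}^{\,p}\, \|f\|_{p,\alpha}^p$. Since this holds for every admissible $f$, taking the infimum over all such $f$ produces the claimed bound with $C_{p,\alpha}=C_{p,\alpha,\beta}^{\,p}$.

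There is essentially no serious obstacle here, as the entire analytic content resides in Theorem~\ref{pol}; the only points requiring care are that each admissible $f$ indeed lies in $\mathbf{B}^{p,\alpha}(X)$ (so that Theorem~\ref{pol} is applicable, which is built into the definition of $\mathbf{Cap}^\alpha_p$) and that the exponent arithmetic $p/q=1-p\alpha/\beta$ is carried out correctly. The standing hypothesis $1\le p<\beta/\alpha$ ensures $q>0$ and $1-\frac{p\alpha}{\beta}>0$, so both sides of the final inequality carry the expected sign and the passage to the infimum is legitimate.
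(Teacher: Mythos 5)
Your proof is correct and is exactly the argument the paper intends: the paper simply states that the corollary is an immediate consequence of Theorem~\ref{pol}, and your write-up (apply the weak Sobolev inequality at $s=1$ to each admissible $f$, use $p/q=1-p\alpha/\beta$, and take the infimum) fills in precisely those routine details, including the correct handling of the case where the admissible class is empty.
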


\begin{proof}
This is an immediate corollary of Theorem \ref{pol}.
\end{proof}

\subsection{Isoperimetric inequalities}\label{S:IsopIneq}

Let $E \subset X$ be a measurable set with finite measure. We will say that $E$ has a finite $\alpha$-perimeter if $\mathbf{1}_E \in \mathbf{B}^{1,\alpha}(X)$. In that case, we will denote
\[
P_\alpha(E)= \| \mathbf{1}_E \|_{1,\alpha}.
\]

\begin{proposition}
	\label{prop:iso}
Let $0 <\alpha < \beta$. For every $E\subset X$ with finite $\alpha$-perimeter
\begin{gather*}
\mu(E)^{\frac{\beta-\alpha}{\beta}} \le C_{\emph{iso}} P_\alpha (E),\end{gather*}\text{where}\begin{gather*}
C_{\emph{iso}} =
\frac{ C^{\frac{\alpha}{\beta}}
(\alpha+\beta)^{\frac{\alpha+\beta}{\beta}}}{2\beta\alpha^{\frac{\alpha}{\beta}}} . 
\end{gather*}
and $C$ is the constant from~\eqref{eq:subGauss-upper3 intro}  and~\eqref{eq:subGauss-upper3}.
\end{proposition}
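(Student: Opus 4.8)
The plan is to adapt M.~Ledoux's semigroup approach to isoperimetry, exploiting from the outset that $f=\mathbf{1}_E$ is an indicator function (note that finite $\alpha$-perimeter forces $\mathbf{1}_E\in L^1(X,\mu)$, so $\mu(E)<\infty$). Using $\mathbf{1}_E^2=\mathbf{1}_E$, I would first split, for every $t>0$,
\[
\mu(E)=\int_X \mathbf{1}_E^2\,d\mu=\int_X \mathbf{1}_E\,(\mathbf{1}_E-P_t\mathbf{1}_E)\,d\mu+\int_X \mathbf{1}_E\,P_t\mathbf{1}_E\,d\mu,
\]
and then estimate the two terms separately before optimizing in $t$.

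For the first term the key point is that a factor $\tfrac12$ can be gained over the naive estimate from the pseudo-Poincar\'e inequality of Lemma~\ref{pseudo-Poincare}. Indeed, the identity~\eqref{prop:energyasbesoveq} established in the proof of Proposition~\ref{prop:energyasbesov} gives, after multiplying by $t$,
\[
\int_X \mathbf{1}_E\,(\mathbf{1}_E-P_t\mathbf{1}_E)\,d\mu=\frac12\int_X P_t\bigl(|\mathbf{1}_E-\mathbf{1}_E(y)|^2\bigr)(y)\,d\mu(y).
\]
Since $\mathbf{1}_E$ takes only the values $0$ and $1$, one has $|\mathbf{1}_E(x)-\mathbf{1}_E(y)|^2=|\mathbf{1}_E(x)-\mathbf{1}_E(y)|$ pointwise, so the right-hand side equals $\tfrac12\int_X P_t(|\mathbf{1}_E-\mathbf{1}_E(y)|)(y)\,d\mu(y)$, which by the very definition of the seminorm $\|\cdot\|_{1,\alpha}$ is at most $\tfrac12\,t^{\alpha}P_\alpha(E)$.

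For the second term I would use the global kernel bound~\eqref{eq:subGauss-upper3}: since $\mathbf{1}_E\ge0$, one has $\|P_t\mathbf{1}_E\|_{L^\infty(X,\mu)}\le Ct^{-\beta}\|\mathbf{1}_E\|_{L^1(X,\mu)}=Ct^{-\beta}\mu(E)$, whence $\int_X \mathbf{1}_E\,P_t\mathbf{1}_E\,d\mu\le Ct^{-\beta}\mu(E)^2$. Combining the two estimates yields the fundamental inequality
\[
\mu(E)\le \tfrac12\,t^{\alpha}P_\alpha(E)+Ct^{-\beta}\mu(E)^2,\qquad t>0.
\]

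Finally I would optimize the right-hand side over $t>0$. The minimizer is $t_*=\bigl(2\beta C\mu(E)^2/(\alpha P_\alpha(E))\bigr)^{1/(\alpha+\beta)}$, and substituting back and simplifying the powers of $\alpha,\beta,C$ produces a bound of the form $\mu(E)\le K\,\mu(E)^{2\alpha/(\alpha+\beta)}P_\alpha(E)^{\beta/(\alpha+\beta)}$; isolating $\mu(E)^{(\beta-\alpha)/(\alpha+\beta)}$ and raising to the power $(\alpha+\beta)/\beta$ gives exactly $\mu(E)^{(\beta-\alpha)/\beta}\le C_{\mathrm{iso}}P_\alpha(E)$ with the stated constant. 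The main obstacle is purely bookkeeping: one must track the constant carefully through the optimization, and it is precisely the factor $\tfrac12$ gained in the first term (which would be lost if one used Lemma~\ref{pseudo-Poincare} directly) that accounts for the $2$ in the denominator of $C_{\mathrm{iso}}$.
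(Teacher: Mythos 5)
Your proposal is correct and is essentially the paper's own proof: the same Ledoux-type splitting of $\mu(E)$ into an $(I-P_t)$-term and a $P_t$-term, the same kernel bound $Ct^{-\beta}\mu(E)^2$ from~\eqref{eq:subGauss-upper3} for the latter, and the identical optimal choice of $t$ producing the stated constant $C_{\mathrm{iso}}$ (the paper additionally disposes of the degenerate case $P_\alpha(E)=0$ before optimizing, a one-line point your write-up should include since your $t_*$ divides by $P_\alpha(E)$ and your final step divides by a power of $\mu(E)$). The only difference is cosmetic: you gain the crucial factor $\tfrac{1}{2}$ on the first term from the $p=2$ identity~\eqref{prop:energyasbesoveq} combined with $|\mathbf{1}_E(x)-\mathbf{1}_E(y)|^2=|\mathbf{1}_E(x)-\mathbf{1}_E(y)|$, whereas the paper gains exactly the same factor by using symmetry and conservativeness to write $\|P_t\mathbf{1}_E-\mathbf{1}_E\|_{L^1(X,\mu)}=2\bigl(\mu(E)-\int_E P_t\mathbf{1}_E\,d\mu\bigr)$ and then applying the pseudo-Poincar\'e inequality of Lemma~\ref{pseudo-Poincare}; for indicator functions these two devices are equivalent.
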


\begin{proof}
We follow an argument originally due to M. Ledoux \cite{Ledoux}. Observe, 
from symmetry and conservativeness of the semigroup, that 
\begin{align*}
\Vert P_t \mathbf{1}_E -\mathbf 1_E  \Vert_{L^1(X,\mu)} =& \int_E (1-P_t \mathbf 1_E ) d\mu + \int_{E^c} P_t \mathbf 1_E\,  d\mu\\
 =& \int_E (1-P_t \mathbf 1_E ) d\mu + \int_E P_t \mathbf1_{E^c}\, d\mu\\
 =& 2 \left( \mu(E)- \int_E P_t \mathbf 1_E \, d\mu \right)
\end{align*}
However, our assumption~\eqref{eq:subGauss-upper3} on the heat kernel gives
\begin{equation*}
	\int_E P_t \mathbf 1_E \, d\mu
	= \int_E\int_E p_t(x,y) \,d\mu(x)\,d\mu(y)
	\leq Ct^{-\beta} \mu(E)^2
	\end{equation*}
Combining these equations with the pseudo-Poincar\'e estimate from Lemma~\ref{pseudo-Poincare} gives, for $t>0$,
\begin{equation*}
2\mu(E) \leq  \|P_t \mathbf 1_E - \mathbf 1_E\|_{L^1(X,\mu)}+2 \int_E P_t \mathbf 1_E \, d\mu
	\leq   t^\alpha \ P_\alpha (E) +2 C t^{-\beta} \mu(E)^2.
\end{equation*}
If $P_\alpha(E)=0$, then letting $t\to\infty$ we also see that $\mu(E)=0$, and so without loss of generality we may
assume that $P_\alpha(E)>0$, in which case we also have $\mu(E)>0$.
We minimize the right-hand side by choosing $t$ satisfying 
\[
t^{\alpha+\beta}= \frac{2C\beta}{\alpha} \frac{\mu(E)^2}{P_\alpha(E)}
\] 
and obtain, for this choice of $t$, that 
\[
2\mu(E)\le \left(1+\frac{\alpha}{\beta}\right) t^\alpha P_\alpha(E).
\]
Raising both sides of this equality to the power $\frac{\alpha+\beta}{\beta}$
concludes the proof.
\end{proof}

We note that in the limiting case $\alpha\to\beta$, the previous estimate for $C_{\mathrm{iso}}$ yields the following corollary, which is applicable on many fractal spaces, see \cite{ABCRST3,ABCRSTfr}:

\begin{corollary}
For every set  $E\subset X$ with finite $\beta$-perimeter and $\mu(E) >0$,
\[
P_\beta(E) \ge \frac{1}{2C}, 
\]
where $C$ is the constant from~\eqref{eq:subGauss-upper3}.
\end{corollary}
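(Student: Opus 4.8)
The plan is to run the argument of Proposition~\ref{prop:iso} in the borderline case $\alpha=\beta$, which realizes the heuristic $\alpha\to\beta$ suggested just before the statement. The advantage of arguing directly at $\alpha=\beta$, rather than passing to the limit in the conclusion of Proposition~\ref{prop:iso}, is that it avoids having to justify any continuity of $\alpha\mapsto P_\alpha(E)$; one simply observes that every ingredient of that proof is available verbatim at $\alpha=\beta$, and that the volume exponent $\tfrac{\beta-\alpha}{\beta}$ then vanishes while the constant $C_{\mathrm{iso}}$ degenerates to $2C$.

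Concretely, I would first reassemble the three facts used in Proposition~\ref{prop:iso}, now with perimeter exponent $\beta$. By symmetry and conservativeness of the semigroup,
\[
\|P_t\mathbf{1}_E-\mathbf{1}_E\|_{L^1(X,\mu)}=2\Bigl(\mu(E)-\int_E P_t\mathbf{1}_E\,d\mu\Bigr);
\]
the heat kernel bound~\eqref{eq:subGauss-upper3} gives $\int_E P_t\mathbf{1}_E\,d\mu\le Ct^{-\beta}\mu(E)^2$; and the pseudo-Poincar\'e inequality of Lemma~\ref{pseudo-Poincare}, applied with $p=1$ and exponent $\beta$ (which is legitimate since finite $\beta$-perimeter means $\mathbf{1}_E\in\mathbf{B}^{1,\beta}(X)$), gives $\|P_t\mathbf{1}_E-\mathbf{1}_E\|_{L^1(X,\mu)}\le t^\beta P_\beta(E)$. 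Combining these exactly as in Proposition~\ref{prop:iso} yields, for every $t>0$,
\[
2\mu(E)\le t^\beta P_\beta(E)+2Ct^{-\beta}\mu(E)^2.
\]

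Finally I would optimize the right-hand side over $t>0$. Setting $u=t^\beta$, it becomes $uP_\beta(E)+2Cu^{-1}\mu(E)^2$, whose minimum over $u>0$ equals $2\sqrt{2C\,P_\beta(E)}\,\mu(E)$, attained at $u=\mu(E)\sqrt{2C/P_\beta(E)}$; here one may assume $P_\beta(E)>0$, since otherwise sending $t\to\infty$ in the displayed inequality would force $\mu(E)=0$. Thus $2\mu(E)\le 2\sqrt{2C\,P_\beta(E)}\,\mu(E)$, and dividing by $2\mu(E)>0$ and squaring gives $1\le 2C\,P_\beta(E)$, i.e.\ $P_\beta(E)\ge\frac{1}{2C}$. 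I do not expect any serious obstacle: the only delicate point is that the optimization is now degenerate, since both competing terms scale like $t^{\pm\beta}$, and it is precisely this degeneracy that makes the volume factor cancel and turns the subcritical isoperimetric inequality into the scale-invariant lower bound on $P_\beta(E)$.
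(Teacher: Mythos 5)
Your proof is correct, and it takes a genuinely cleaner route than the one the paper sketches. The paper obtains this corollary purely as the ``limiting case $\alpha\to\beta$'' of Proposition~\ref{prop:iso}, i.e.\ by noting that $C_{\mathrm{iso}}=\frac{C^{\alpha/\beta}(\alpha+\beta)^{(\alpha+\beta)/\beta}}{2\beta\,\alpha^{\alpha/\beta}}\to 2C$ and $\mu(E)^{(\beta-\alpha)/\beta}\to 1$ as $\alpha\to\beta^-$. Read literally, that limiting argument has exactly the loose end you identified: Proposition~\ref{prop:iso} bounds $\mu(E)^{(\beta-\alpha)/\beta}$ by $C_{\mathrm{iso}}\,P_\alpha(E)$, not by $C_{\mathrm{iso}}\,P_\beta(E)$, so to pass to the limit one must additionally show something like $\limsup_{\alpha\to\beta^-}P_\alpha(E)\le P_\beta(E)$ (which is true, but needs a separate argument splitting the supremum over $t$ at scales where the trivial bound $\|P_t\mathbf{1}_E-\mathbf{1}_E\|_{L^1(X,\mu)}\le 2\mu(E)$ takes over from the bound $t^\beta P_\beta(E)$). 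Your proof sidesteps this entirely by rerunning the proof of Proposition~\ref{prop:iso} verbatim at the endpoint $\alpha=\beta$: the symmetry/conservativeness identity, the kernel bound~\eqref{eq:subGauss-upper3}, and Lemma~\ref{pseudo-Poincare} are all valid there, and your optimization $\min_{u>0}\bigl(uP_\beta(E)+2C\mu(E)^2u^{-1}\bigr)=2\mu(E)\sqrt{2C\,P_\beta(E)}$ (together with the correct dismissal of the case $P_\beta(E)=0$) makes the volume factor cancel and yields $P_\beta(E)\ge\frac{1}{2C}$ directly. So both arguments rest on the same three ingredients and the same minimization; what yours buys is a self-contained endpoint proof requiring no continuity of $\alpha\mapsto P_\alpha(E)$, at the modest cost of repeating, rather than citing, the computation in the proposition.
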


\subsection{Strong Sobolev inequality}

We now prove strong Sobolev inequalities. This requires an additional assumption on the space.

\begin{definition}\label{chaining}
We say that the Dirichlet space satisfies the property $(P_{p,\alpha})$ if there exists a constant $C>0$ such that for every 
$f \in \mathbf{B}^{p,\alpha}(X)$,
\[
\| f \|_{p,\alpha} \le C \liminf_{t \to 0} t^{-\alpha} \left( \int_X \int_X |f(x)-f(y) |^p p_t (x,y) d\mu(x) d\mu(y) \right)^{1/p}.
\]
\end{definition}

\begin{remark}
The property $(P_{p,\alpha})$ with $\alpha=1/2$ can be seen as a stronger form of the Proposition \ref{Critical bound Chapter 1}. As shown in \cite{ABCRST2,ABCRST3}, in many situations, the property $(P_{p,\alpha})$ is  satisfied, provided that the space $X$ satisfies a weak Bakry-\'Emery type non-negative curvature condition and that $\alpha$ is the $L^p$-Besov critical exponent of $X$ (see Section \ref{section critical} for the definition of Besov critical exponent). 

For instance,  $(P_{p,\alpha})$ is   satisfied for $p=1, \alpha=1/2$ for the standard Dirichlet form of $\mathbb{R}^n$. It is also satisfied for $p=1, \alpha=1/2$  for  the standard Dirichlet form of a complete Riemannian manifold with non-negative Ricci curvature. More generally, in the framework of \cite{ABCRST2},  property $(P_{p,\alpha})$  is satisfied when $p=1, \alpha=1/2$, see  Theorem 5.2 there. In the framework of Dirichlet spaces with sub-Gaussian heat kernel estimates, see \cite{ABCRST2},
 the property $(P_{1,\alpha})$ is satisfied for some $\alpha$ (the  $L^1$ Besov critical exponent) provided that a weak Bakry-\'Emery type estimate is satisfied. 
\end{remark}

Our main theorem is then the following:

\begin{theorem}\label{Sobolev}
Assume that the Dirichlet space satisfies the property $(P_{p,\alpha})$
and that $\beta$ is given in~\eqref{eq:subGauss-upper3}.
Let $0<\alpha <\beta $. Let $1 \le p < \frac{\beta}{\alpha} $. There exists a 
constant $C_{p,\alpha,\beta}>0$ such that for every $f \in \mathbf{B}^{p,\alpha}(X)$,
\[
\| f \|_{L^q(X,\mu)} \le C_{p,\alpha,\beta} \| f \|_{p,\alpha},
\]
where $q=\frac{p\beta}{ \beta-p \alpha}$.
\end{theorem}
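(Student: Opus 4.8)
The plan is to upgrade the weak-type inequality of Theorem~\ref{pol} to the strong one by a truncation (Maz'ya-type) argument, using property $(P_{p,\alpha})$ to convert the Besov seminorms of the individual truncations back into the additive heat-kernel energies $I_t(f):=\int_X\int_X |f(x)-f(y)|^p p_t(x,y)\,d\mu(x)\,d\mu(y)$. First I would reduce to nonnegative $f$: since $|f|$ is a normal contraction of $f$, Remark~\ref{rem-normalcontruction} gives $\||f|\|_{p,\alpha}\le \|f\|_{p,\alpha}$, while $\|f\|_{L^q(X,\mu)}=\||f|\|_{L^q(X,\mu)}$, so it suffices to treat $f\ge 0$. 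Then, exactly as in the proof of Theorem~\ref{pol}, I set $f_k=(f-2^k)_+\wedge 2^k$ for $k\in\mathbb{Z}$; each $f_k$ lies in $\mathbf{B}^{p,\alpha}(X)$ with $\|f_k\|_{p,\alpha}\le\|f\|_{p,\alpha}$.

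Next I would run a dyadic layer-cake decomposition. On one hand, a standard computation gives $\|f\|_{L^q(X,\mu)}^q\le (2^q-1)\sum_k 2^{kq}\mu(\{f\ge 2^k\})$. On the other hand, since $f_k=2^k$ on $\{f\ge 2^{k+1}\}$, applying the weak inequality of Theorem~\ref{pol} to $f_k$ yields $2^k\mu(\{f\ge 2^{k+1}\})^{1/q}\le C\|f_k\|_{p,\alpha}$. Raising to the $q$-th power and summing over $k$ (after reindexing) gives $\|f\|_{L^q(X,\mu)}^q\lesssim \sum_k\|f_k\|_{p,\alpha}^q$. Because $q>p$ (as $p<\beta/\alpha$), the embedding $\ell^p\hookrightarrow\ell^q$ lets me pass to $\sum_k\|f_k\|_{p,\alpha}^q\le\left(\sum_k\|f_k\|_{p,\alpha}^p\right)^{q/p}$, reducing everything to the single estimate $\sum_k\|f_k\|_{p,\alpha}^p\le C\|f\|_{p,\alpha}^p$.

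The crux is this last estimate, and it is where $(P_{p,\alpha})$ enters. By $(P_{p,\alpha})$ one has $\|f_k\|_{p,\alpha}^p\le C\liminf_{t\to 0}t^{-\alpha p}I_t(f_k)$, so it suffices to control $\sum_k I_t(f_k)$ by $I_t(f)$. The decisive observation is the pointwise inequality
\[
\sum_k |f_k(x)-f_k(y)|^p \le |f(x)-f(y)|^p \qquad \text{for all } x,y\in X.
\]
Writing $\phi_k(u)=(u-2^k)_+\wedge 2^k$, the increments $\phi_k(f(x))-\phi_k(f(y))$ are all of the same sign (each $\phi_k$ is nondecreasing) and sum in first power to $f(x)-f(y)$, so $\sum_k|\phi_k(f(x))-\phi_k(f(y))|=|f(x)-f(y)|$; the claim then follows from $\sum_k b_k^p\le(\sum_k b_k)^p$ for $b_k\ge 0$ and $p\ge 1$. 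Integrating against $p_t(x,y)\,d\mu\,d\mu$ gives $\sum_k I_t(f_k)\le I_t(f)$. Combining this with the superadditivity of $\liminf$ over the series and with $\liminf_{t\to 0}t^{-\alpha p}I_t(f)\le\sup_{t>0}t^{-\alpha p}I_t(f)=\|f\|_{p,\alpha}^p$ closes the chain and yields $\|f\|_{L^q(X,\mu)}\le C_{p,\alpha,\beta}\|f\|_{p,\alpha}$.

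I expect the main obstacle to be bookkeeping rather than any deep difficulty: verifying the pointwise summation inequality cleanly, justifying the exchange $\sum_k\liminf_t\le\liminf_t\sum_k$ for the infinite series, and tracking the reindexing and constants through the dyadic sum. Everything else is assembled from Theorem~\ref{pol}, property $(P_{p,\alpha})$, and elementary convexity.
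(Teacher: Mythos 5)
Your proof is correct and takes essentially the same route as the paper: the same dyadic truncations $f_k=(f-2^k)_+\wedge 2^k$, the weak-type inequality of Theorem~\ref{pol} applied at level $s=2^k$, the embedding $\ell^p\hookrightarrow\ell^q$ (valid since $q>p$), and the reduction to the key bound $\sum_{k}\|f_k\|_{p,\alpha}^p\le C\|f\|_{p,\alpha}^p$ via property $(P_{p,\alpha})$ together with a Fatou-type interchange of $\sum_k$ and $\liminf_{t\to 0}$. The only difference is that where the paper invokes Lemma~7.1 of~\cite{BCLS} for the inequality $\sum_k\int_X\int_X|f_k(x)-f_k(y)|^p p_t(x,y)\,d\mu(x)\,d\mu(y)\le C_p\int_X\int_X|f(x)-f(y)|^p p_t(x,y)\,d\mu(x)\,d\mu(y)$, you prove it directly (with constant $1$) from the pointwise identity $\sum_k|f_k(x)-f_k(y)|=|f(x)-f(y)|$, which is a sound, self-contained substitute.
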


Note that in the standard Euclidean setting of $\mathbb{R}^n$ the Sobolev embedding theorem holds as above with
$\beta=n$.
To show that the weak type inequality implies the desired Sobolev inequality, we will need another cutoff argument and the following lemma is needed.

\begin{lemma}
For $f \in \mathbf{B}^{p,\alpha}(X)$, $f \ge 0$, denote $f_k=(f-2^k)_+ \wedge 2^k$, $k \in \mathbb{Z}$. There exists a constant $C>0$ such that for every $f \in \ \mathbf{B}^{p,\alpha}(X)$,
\[
\left( \sum_{k \in \mathbb{Z}} \| f_k\|_{p,\alpha}^p \right)^{1/p} \le C \| f \|_{p,\alpha}.
\]
\end{lemma}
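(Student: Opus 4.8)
The plan is to exploit that the dyadic truncations $f_k=(f-2^k)_+\wedge 2^k$ have \emph{disjoint increments}, so that their seminorms cannot add up faster than that of $f$. First I would use the standing assumption $f\ge 0$ and record two elementary pointwise facts about the $f_k$. Inspecting a point where $f=v$ and using the geometric series gives the telescoping identity $\sum_{k\in\Z}f_k=f$; more importantly, evaluating at two points $x,y$ one checks that the increments telescope \emph{exactly}, $\sum_{k\in\Z}|f_k(x)-f_k(y)|=|f(x)-f(y)|$. Since $p\ge 1$ and every summand is nonnegative, the elementary bound $\sum_k b_k^p\le(\sum_k b_k)^p$ yields the key pointwise inequality
\[
\sum_{k\in\Z}|f_k(x)-f_k(y)|^p\le |f(x)-f(y)|^p .
\]
This is the heart of the matter; in the smooth setting of Proposition~\ref{Besov Rn} it is the infinitesimal statement that the carré du champ of the $f_k$ have essentially disjoint supports, which is the real reason the lemma holds.

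Next I would integrate against the heat kernel measure. Writing $I_k(t):=\int_X\int_X|f_k(x)-f_k(y)|^p\,p_t(x,dy)\,d\mu(x)$, and $I(t)$ for the analogous quantity for $f$, integrating the displayed inequality against $p_t(x,dy)\,d\mu(x)$ gives, for every fixed $t>0$, the \emph{coupled} bound
\[
\sum_{k\in\Z}I_k(t)\le I(t)\le t^{\alpha p}\,\|f\|_{p,\alpha}^p .
\]
I would also record two uniform-in-$t$ facts for later use: conservativeness together with the symmetry \eqref{heat kernel measure symmetry} gives $I_k(t)\le 2^p\|f_k\|_{L^p(X,\mu)}^p$, while $\sum_k f_k^p\le(\sum_k f_k)^p=f^p$ gives $\sum_k\|f_k\|_{L^p(X,\mu)}^p\le\|f\|_{L^p(X,\mu)}^p$.

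The main obstacle is that the quantity we must bound is $\sum_k\sup_t t^{-\alpha p}I_k(t)$, whereas the coupled inequality only controls $\sup_t\sum_k t^{-\alpha p}I_k(t)$; these differ because distinct slices may realize their seminorms at different time scales, so the supremum and the sum cannot simply be interchanged. To resolve this I would invoke the locality-in-time estimate of Lemma~\ref{Lemma limsup debut}: for each $k$ and each threshold $\tau_k>0$,
\[
\|f_k\|_{p,\alpha}^p\le 2^{p-1}\Bigl(2^p\,\tau_k^{-\alpha p}\|f_k\|_{L^p(X,\mu)}^p+\sup_{0<s\le\tau_k}s^{-\alpha p}I_k(s)\Bigr).
\]
The large-time terms are then summed using $\sum_k\|f_k\|_{L^p}^p\le\|f\|_{L^p}^p$, and the small-time suprema are summed by grouping the indices $k$ according to the dyadic scale of their near-maximizing time and applying the coupled inequality at each common scale.

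The delicate point — and the step I expect to require the most care — is the calibration of the thresholds $\tau_k$: they must be chosen (adapted to the size of $\|f_k\|_{L^p}$) so that each slice's maximizing scale falls in the regime that term is designed to control, and so that the two families of contributions decay geometrically across scales and sum to a constant multiple of $\|f\|_{p,\alpha}^p$ rather than of $\|f\|_{L^p}^p$. Once the lemma is in hand, feeding it into Theorem~\ref{Sobolev} needs only the crude embedding $\ell^p\hookrightarrow\ell^q$ (valid since $q=\tfrac{p\beta}{\beta-p\alpha}\ge p$), which upgrades $\sum_k\|f_k\|_{p,\alpha}^p\le C\|f\|_{p,\alpha}^p$ to a bound on $\sum_k\|f_k\|_{p,\alpha}^q$, after which the weak inequality of Theorem~\ref{pol} applied to each $f_k$ and summed yields the strong Sobolev estimate.
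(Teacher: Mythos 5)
Your first step is correct, and in fact cleaner than what the paper does: since each truncation $v\mapsto (v-2^k)_+\wedge 2^k$ is a non-decreasing function of $v$, the increments $f_k(x)-f_k(y)$ all have the same sign, so they telescope exactly, and $\sum_{k\in\mathbb{Z}}|f_k(x)-f_k(y)|^p\le |f(x)-f(y)|^p$ follows from $\sum_k b_k^p\le\bigl(\sum_k b_k\bigr)^p$ for nonnegative $b_k$; the paper instead cites Lemma 7.1 of \cite{BCLS} for this inequality with an unspecified constant $C_p$. You also correctly isolate the real difficulty: the lemma requires control of $\sum_k\sup_t t^{-\alpha p}I_k(t)$, whereas the sliced inequality only controls $\sup_t\sum_k t^{-\alpha p}I_k(t)$.

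However, at exactly that point your argument stops being a proof, and the mechanism you sketch to fix it cannot work. The large-time terms coming from Lemma~\ref{Lemma limsup debut} produce $\sum_k\tau_k^{-\alpha p}\|f_k\|_{L^p(X,\mu)}^p$, a quantity involving the $L^p$ norm with $k$-dependent weights, which no choice of thresholds turns into a multiple of $\|f\|_{p,\alpha}^p$; and for the small-time suprema, grouping indices by the dyadic scale of their near-maximizing time yields at best one application of the coupled inequality per occupied scale, i.e.\ a bound of the form $\sum_j C\|f\|_{p,\alpha}^p$ over those scales $j$ --- nothing forces geometric decay across scales or finitely many occupied scales, so this sum can diverge. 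The missing idea is that this lemma sits in the subsection where property $(P_{p,\alpha})$ of Definition~\ref{chaining} is a standing assumption, and that property is precisely engineered to resolve the sup/sum interchange: it replaces the supremum in each seminorm by a limit inferior as $t\to 0$, namely $\|f_k\|_{p,\alpha}^p\le C^p\liminf_{t\to 0}t^{-\alpha p}I_k(t)$, and limits inferior, unlike suprema, pass through infinite sums in the useful direction by Fatou's lemma for series: $\sum_k\liminf_{t\to 0}t^{-\alpha p}I_k(t)\le\liminf_{t\to 0}t^{-\alpha p}\sum_k I_k(t)\le\liminf_{t\to 0}t^{-\alpha p}I(t)\le\|f\|_{p,\alpha}^p$. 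This one line, together with your step~1, is the paper's entire proof. Without $(P_{p,\alpha})$ the statement is not known to hold (the constant $C$ in the lemma is exactly the constant from $(P_{p,\alpha})$), so an attempt like yours, which uses only Lemma~\ref{Lemma limsup debut}, is in effect trying to manufacture $(P_{p,\alpha})$-type control out of nothing and should be expected to fail.
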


\begin{proof}
By a similar type of argument, as in the the proof of Lemma 7.1 in \cite{BCLS}, one has for some constant $C_p>0$,
\[
\sum_{k \in \mathbb{Z}} \int_X \int_X |f_k (x)-f_k(y)|^p p_t(x,y)d\mu(x) d\mu(y) \le C_p \int_X \int_X |f (x)-f(y)|^p p_t(x,y)d\mu(x) d\mu(y) .
\]
As a consequence of property $(P_{p,\alpha})$,
\[
\left( \sum_{k \in \mathbb{Z}} \| f_k\|_{p,\alpha}^p \right)^{1/p} \le C'_p \| f \|_{p,\alpha}.
\]
and the proof is complete.
\end{proof}

We can now conclude the proof of Theorem \ref{Sobolev}.

\begin{proof}[\textbf{Proof of Theorem \ref{Sobolev}}]
Let $f \in \mathbf{B}^{p,\alpha}(X)$. We can assume $f \ge 0$. As before, denote $f_k=(f-2^k)_+ \wedge 2^k$, $k \in \mathbb{Z}$. From Lemma \ref{pol} applied to $f_k$, we see that

\[
\sup_{s \ge 0} s \mu \left( \{ x \in X\, :\, | f_k(x) | \ge s \} \right)^{\frac{1}{q}} \le C_{p,\alpha} \| f_k \|_{p,\alpha}.
\]
In particular for $s=2^k$, we get
\[
2^k \mu \left( \{ x \in X\, :\, f (x) \ge 2^{k+1} \} \right)^{\frac{1}{q}} \le C_{p,\alpha} \| f_k \|_{p,\alpha}.
\]
Therefore,
\[
\sum_{k \in \mathbb{Z}} 2^{k q} \mu \left( \{ x \in X\, :\, f (x) \ge 2^{k+1} \} \right)
\le C_{p,\alpha}^q \sum_{k \in \mathbb{Z}} \| f_k \|_{p,\alpha}^q.
\]
Since $q \ge p$, one has 
$\sum_{k \in \mathbb{Z}} \| f_k \|_{p,\alpha}^q\le \left( \sum_{k \in \mathbb{Z}} \| f_k \|_{p,\alpha}^p \right)^{q/p}$. 
Thus, from the previous lemma
\[
\sum_{k \in \mathbb{Z}} 2^{k q} \mu \left( \{ x \in X\, :\, f (x) \ge 2^{k+1} \} \right)\le C_{p,\alpha}^q \|  f \|_{p,\alpha}^q.
\]
Finally, we observe that
\begin{align*}
\sum_{k \in \mathbb{Z}} 2^{k q} \mu \left( \{ x \in X\, :\, f (x) \ge 2^{k+1} \} \right) 
&\ge \frac{q}{2^{q+1}-2^q} \sum_{k \in \mathbb{Z}}\int_{2^{k+1}}^{2^{k+2}} s^{q-1} \mu \left( \{ x \in X\, :\, f (x) \ge s \} \right)ds \\
 &\ge \frac{1}{2^{q+1}-2^q} \| f \|_{L^q(X,\mu)}^q.
\end{align*}
The proof is thus complete.
\end{proof}

\subsection{Application}

The Sobolev embeddings studied in this section have many applications that will be studied in great details in the papers \cite{ABCRST2,ABCRST3,ABCRST4}. We just mention here that by combining Theorem \ref{sub gaussian intro} and Theorem \ref{polintro2}  one immediately obtains:

\begin{corollary}
Let $X$ be an Ahlfors $d_H$-regular space that satisfies sub-Gaussian  heat kernel estimates  as in Theorem \ref{sub gaussian intro}. Then, one has the following weak type Besov space embedding. Let $0<\delta < d_H $. Let $1 \le p < \frac{d_H}{\delta} $. There exists a constant $C_{p,\delta} >0$ such that for every $f \in \mathbf{B}^{p,\delta/d_W}(X) $,
\[
\sup_{s \ge 0} s \mu \left( \{ x \in X, | f(x) | \ge s \} \right)^{\frac{1}{q}} \le C_{p,\delta} \sup_{r>0} \frac{1}{r^{\delta+d_{H}/p}}
\biggl(\iint_{\Delta_r} 
|f(x)-f(y)|^{p}\,d\mu(x)\,d\mu(y)\biggr)^{1/p}
\]
where $q=\frac{p d_H}{ d_H -p \delta}$. Furthermore, for every $0<\delta <d_H $, there exists a constant $C_{\emph{iso},\delta}$ such that for every measurable $E \subset X$, $\mu(E) <+\infty$,
\begin{align}\label{isoperimetric intro}
\mu(E)^{\frac{d_H-\delta}{d_H}} \le C_{\emph{iso},\delta} \sup_{r>0} \frac{1}{r^{\delta+d_{H}}} (\mu \otimes \mu) \left\{ (x,y) \in E \times E^c\, :\, d(x,y)  < r\right\}.
\end{align}
\end{corollary}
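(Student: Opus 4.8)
The plan is to obtain both assertions by specializing the general results of this section to the parameters dictated by the sub-Gaussian estimates, and then translating the heat-semigroup Besov seminorm into its metric form via Theorem~\ref{sub gaussian intro}. First I would observe that the sub-Gaussian upper bound assumed in Theorem~\ref{sub gaussian intro} forces the global bound~\eqref{eq:subGauss-upper3}: since the exponential factor there is at most $1$, one has $p_t(x,y)\le c_3 t^{-d_H/d_W}$ for $\mu\times\mu$-a.e.\ $(x,y)$ and all $t>0$, so hypothesis~\eqref{eq:subGauss-upper3} holds with $\beta=d_H/d_W$ and $C=c_3$.

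With this value of $\beta$ fixed, I would set $\alpha=\delta/d_W$. The constraint $0<\alpha<\beta$ becomes exactly $0<\delta<d_H$, the constraint $1\le p<\beta/\alpha$ becomes exactly $1\le p<d_H/\delta$, and the critical exponent is $q=\frac{p\beta}{\beta-p\alpha}=\frac{p\,d_H}{d_H-p\delta}$, matching the statement. Theorem~\ref{pol} then yields directly
\[
\sup_{s\ge0} s\,\mu\bigl(\{x\in X:|f(x)|\ge s\}\bigr)^{1/q}\le C_{p,\alpha,\beta}\,\|f\|_{p,\delta/d_W}.
\]
To finish the weak-type embedding I would invoke Theorem~\ref{sub gaussian intro} with $\alpha=\delta/d_W$: since $\alpha d_W+d_H/p=\delta+d_H/p$, the Besov seminorm $\|f\|_{p,\delta/d_W}$ is comparable to $\sup_{r>0} r^{-(\delta+d_H/p)}\bigl(\iint_{\Delta_r}|f(x)-f(y)|^p\,d\mu\,d\mu\bigr)^{1/p}$, and substituting this comparison into the previous display gives the first inequality.

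For the isoperimetric inequality I would specialize to $p=1$ and $f=\mathbf{1}_E$; note that when $p=1$ the constraint $p<d_H/\delta$ is automatic for $0<\delta<d_H$, which is why no restriction on $p$ appears. Proposition~\ref{prop:iso} with $\alpha=\delta/d_W$ gives $\mu(E)^{(\beta-\alpha)/\beta}=\mu(E)^{(d_H-\delta)/d_H}\le C_{\mathrm{iso}}\,\|\mathbf{1}_E\|_{1,\delta/d_W}$. The remaining step is to rewrite the metric seminorm from Theorem~\ref{sub gaussian intro} (now with exponent $\delta+d_H$) for an indicator function: since $|\mathbf{1}_E(x)-\mathbf{1}_E(y)|$ equals $1$ precisely when exactly one of $x,y$ lies in $E$, the symmetry of $\mu\otimes\mu$ gives $\iint_{\Delta_r}|\mathbf{1}_E(x)-\mathbf{1}_E(y)|\,d\mu\,d\mu=2\,(\mu\otimes\mu)\{(x,y)\in E\times E^c:d(x,y)<r\}$. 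Feeding this identity into the comparison and into the previous display produces~\eqref{isoperimetric intro}.

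The computation is elementary throughout; the only points requiring care are the bookkeeping of exponents under the substitution $\alpha=\delta/d_W$, $\beta=d_H/d_W$, and the factor of $2$ arising from symmetrizing the indicator integral. There is no genuine analytic obstacle, which is exactly why the statement can be advertised as following immediately from the two cited theorems. The constant $C_{\mathrm{iso},\delta}$ is then the product of the constant $C_{\mathrm{iso}}$ of Proposition~\ref{prop:iso}, the seminorm-comparison constant of Theorem~\ref{sub gaussian intro}, and the factor $2$.
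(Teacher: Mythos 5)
Your proposal is correct and follows essentially the same route as the paper: take $\beta=d_H/d_W$ from the sub-Gaussian upper bound, apply the weak Sobolev inequality (Theorem~\ref{pol}) and the isoperimetric estimate (Proposition~\ref{prop:iso}) with $\alpha=\delta/d_W$, and translate the seminorm via Theorem~\ref{sub gaussian intro}. The only difference is that the paper invokes the packaged statement (Theorem~\ref{polintro2}) and leaves the indicator-function identity $\iint_{\Delta_r}|\mathbf{1}_E(x)-\mathbf{1}_E(y)|\,d\mu\,d\mu=2\,(\mu\otimes\mu)\{(x,y)\in E\times E^c:d(x,y)<r\}$ implicit, whereas you spell it out --- a worthwhile addition, not a deviation.
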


\begin{remark}
 The number $\delta$ in the previous corollary plays the role of the upper codimension of the boundary of $E$. This will be further commented in \cite{ABCRST3}. 
\end{remark}

\begin{proof}
From the upper sub-Gaussian estimate, one has
\[
p_{t}(x,y)\leq C t^{-\beta}
\]
where $\beta=d_H/d_W$.
Let $0<\alpha <\beta $. Let $1 \le p < \frac{\beta}{\alpha} $. From Theorem \ref{polintro2}, there exists a constant $C_{p,\alpha} >0$ such that for every $f \in \mathbf{B}^{p,\alpha}(X) $,
\[
\sup_{s \ge 0}\, s\, \mu \left( \{ x \in X\, :\, | f(x) | \ge s \} \right)^{\frac{1}{q}} \le C_{p,\alpha} \| f \|_{p,\alpha},
\]
where $q=\frac{p\beta}{ \beta -p \alpha}$. However, from Theorem \ref{sub gaussian intro},
\[
\| f \|_{p,\alpha} \le C \sup_{r>0} \frac{1}{r^{\alpha d_W+d_{H}/p}}
 \biggl(\iint_{\Delta_r} 
 |f(x)-f(y)|^{p}\,d\mu(x)\,d\mu(y)\biggr)^{1/p}.
 \]
 The result follows then with $\delta=\alpha d_W$.
\end{proof}

\section{Cheeger constant and Gaussian isoperimetry}

While the previous section was devoted to Sobolev inequalities on Dirichlet spaces for which the semigroup satisfies ultracontractive estimates, the present section  is devoted to situations where the Dirichlet form satisfies a Poincar\'e inequality or a log-Sobolev inequality.

\subsection{Buser's type  inequality for the Cheeger constant of a Dirichlet space}
In the context of  a smooth compact  $n$-dimensional
Riemannian manifold with a normalized Riemannian measure $\mu$, Cheeger   introduced in \cite{Ch}   the following isoperimetric constant
\[
h=\inf \frac{\mathcal{H}^{n-1}(\partial A)}{\mu (A)},
\]
where $\mathcal{H}^{n-1}(\partial A)$ denotes the perimeter measure of $A$ and where the infimum runs over all open subsets $A$ with smooth boundary $\partial A$ such that $\mu(A)\le \frac{1}{2}$. Cheeger's constant can be used to bound from below the first non zero eigenvalue of the manifold. Indeed, it is proved in \cite{Ch} that
\[
\lambda_1 \ge \frac{h^2}{4}.
\]

Buser \cite{Bu} then proved that if the Riemannian Ricci curvature of the manifold is non-negative, then we actually have
\[
\lambda_1 \le C h^2,
\]
where $C$ is a universal constant depending only on the dimension. Buser's inequality was reproved by Ledoux \cite{Le} using heat semigroup techniques. Under proper assumptions, by using the tools we introduced in the present paper, Ledoux' technique can be essentially  reproduced in our general framework of Dirichlet spaces.

Let $(X,\mu,\mathcal{E},\mathcal{F})$ be a  Dirichlet space such that $\mu(X)=1$. Let $\{P_{t}\}_{t\in[0,\infty)}$ denote the semigroup associated with $(X,\mu,\mathcal{E},\mathcal{F})$.  The Dirichlet form $\mathcal{E}$ is said to satisfy a spectral gap inequality with spectral gap $\lambda_1$ if for every $f \in \mathcal{F}$,
\[
\int_X f^2 d\mu -\left( \int_X f d\mu \right)^2 \le \frac{1}{\lambda_1} \mathcal{E}(f,f).
\]
 We assume in this section that $\mathcal{E}$ satisfies a spectral gap inequality. For $\alpha \in (0,1]$, we define the $\alpha$-Cheeger's constant  of $X$ by
\[
h_\alpha=\inf \frac{\| \mathbf 1_E \|_{1,\alpha}}{\mu(E)},
\]
where the infimum runs over all measurable sets $E$ such that $\mu(E)\le \frac{1}{2}$ and $\mathbf 1_E \in \mathbf{B}^{1,\alpha}(X)$. We denote by $\lambda_1$ the spectral gap of $\mathcal{E}$.

\begin{theorem}
We have $h_\alpha \ge (1-e^{-1}) \lambda^\alpha_1$.
\end{theorem}

\begin{proof}
Let $A$ be a set with $P_\alpha (A) :=\| \mathbf 1_A \|_{1,\alpha}< +\infty$. As shown in the proof of Proposition \ref{prop:iso}, we have
\begin{align*}
\Vert \mathbf 1_A - P_t \mathbf 1_A \Vert_{L^1(X,\mu)} 
 =2 \left( \mu(A)- \Vert P_\frac{t}{2} (\mathbf 1_A) \Vert_{L^2(X,\mu)}^2 \right).
\end{align*} 
By the pseudo-Poincar\'e inequality in Lemma \ref{pseudo-Poincare}, 
\[
\| P_t \mathbf 1_A -\mathbf 1_A \|_{L^1(X,\mu)} \le t^\alpha P_\alpha(A).
\]
We deduce that
\[
\mu(A) \le \frac{1}{2} t^\alpha P_\alpha(A)+ \Vert P_\frac{t}{2} (\mathbf 1_A) \Vert_{L^2(X,\mu)}^2.
\]
Now, by the spectral theorem,
\[
\Vert P_\frac{t}{2} (\mathbf 1_A) \Vert_{L^2(X,\mu)}^2=\mu(A)^2 + \Vert P_\frac{t}{2} (\mathbf 1_A-\mu(A)) \Vert_{L^2(X,\mu)}^2 \le \mu(A)^2 +e^{-\lambda_1 t} \| \mathbf 1_A-\mu(A) \|_{L^2(X,\mu)}^2.
\]
This yields
\[
\mu(A) \le \frac{1}{2} t^\alpha P_\alpha(A)+ \mu(A)^2 +e^{-\lambda_1 t} \| \mathbf 1_A-\mu(A) \|_{L^2(X,\mu)}^2.
\]
Equivalently, one obtains
\[
\frac{1}{2} t^\alpha P_\alpha(A) \ge \mu(A) (1 -\mu(A))(1-e^{-\lambda_1 t}).
\]
Therefore,
\[
h_\alpha \ge \sup_{t >0} \left( \frac{1-e^{-\lambda_1 t}}{t^\alpha} \right),
\]
which completes the proof.
\end{proof}

As already noted in \cite{BK}, let us observe that it is known that the Cheeger lower bound on $\lambda_1$ may be obtained under further assumptions on the Dirichlet space $(X,d,\mathcal{E})$. Indeed, assume  that $\mathcal{E}$ is strictly local with a carr\'e du champ $\Gamma$,  that Lipschitz functions are in the domain of $\mathcal{E}$ and that $\sqrt{\Gamma(f)}$ is an upper gradient in the sense that for any Lipchitz function $f$,
\[
\sqrt{\Gamma(f)}(x) =\lim \sup_{d(x,y)\to 0} \frac{ |f(x)-f(y)|}{d(x,y)}.
\]
In that case, if $A$ is a closed subset of $X$, we define its Minkowski exterior boundary measure by
\[
\mu_+(A)=\lim \inf_{\varepsilon \to 0} \frac{1}{\varepsilon} \left( \mu(A_\varepsilon) -\mu(A) \right),
\]
where $A_\varepsilon=\{ x \in X, d(x,A) <\varepsilon \}$. We can then define a Cheeger's constant of $X$ by
\[
h_+=\inf \frac{\mu_+(E)}{\mu(E)},
\]
where the infimum runs over all closed sets $E$ such that $\mu(E)\le \frac{1}{2}$. Then, according to 
Theorem~8.5.2 in~\cite{BGL}, one has
\[
\lambda_1 \ge \frac{h^2_+}{4}.
\]

\subsection{Log-Sobolev and Gaussian isoperimetric inequalities}

Let $(X,\mu,\mathcal{E},\mathcal{F})$ be a  Dirichlet space such that $\mu(X)=1$. Let $\{P_{t}\}_{t\in[0,\infty)}$ denote the semigroup associated with $(X,\mu,\mathcal{E},\mathcal{F})$.  The Dirichlet form $\mathcal{E}$ is said to satisfy a log-Sobolev inequality with constant $\rho_0$ if for every $f \in \mathcal{F}$, $f \ge 0$,
\begin{equation}\label{log Sob}
\int_X f^2 \ln f^2 d\mu -\int_X f^2 d\mu \ln \int_X f^2 d\mu \le \frac{1}{\rho_0} \mathcal{E}(f,f).
\end{equation}

We assume in this section that $\mathcal{E}$ satisfies a log-Sobolev inequality  with constant $\rho_0$.  We define the Gaussian isoperimetric constant of $X$ by
\[
k=\inf \frac{\| \mathbf{1}_E \|_{1,1/2}}{\mu(E)\sqrt{-\ln \mu(E)}},
\]
where the infimum runs over all sets $E$ such that $\mu(E)\le \frac{1}{2}$ and $\mathbf{1}_E \in \mathbf{B}^{1,1/2}(X)$.  Following an argument of M. Ledoux \cite{Le}, we prove the following:

\begin{theorem} 
We have
\[
\rho_0 \le C_{\text{l}} k^2
\]
where $C_{\text{l}}$ is a numerical constant.
\end{theorem}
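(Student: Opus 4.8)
The plan is to reproduce Ledoux's heat-semigroup proof of the Gaussian isoperimetric inequality, proceeding exactly as in the Buser-type argument above but replacing the spectral-gap control of $\|P_{t/2}\mathbf 1_A\|_{L^2(X,\mu)}^2$ by the stronger hypercontractive bound that the log-Sobolev inequality~\eqref{log Sob} supplies. Fix a measurable $A$ with $0<\mu(A)\le\tfrac12$ and $P_{1/2}(A)=\|\mathbf 1_A\|_{1,1/2}<\infty$, and write $m=\mu(A)$.

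First I would record, exactly as in the proof of Proposition~\ref{prop:iso}, the identity $\|P_t\mathbf 1_A-\mathbf 1_A\|_{L^1(X,\mu)}=2\bigl(m-\|P_{t/2}\mathbf 1_A\|_{L^2(X,\mu)}^2\bigr)$, which follows from symmetry, conservativeness, and the semigroup property. Combining it with the pseudo-Poincar\'e inequality of Lemma~\ref{pseudo-Poincare} applied with $p=1$ and $\alpha=\tfrac12$ yields $m-\|P_{t/2}\mathbf 1_A\|_{L^2(X,\mu)}^2\le \tfrac12\,t^{1/2}\,P_{1/2}(A)$ for every $t>0$.

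The key step, and the one I expect to be the crux, is to bound $\|P_{t/2}\mathbf 1_A\|_{L^2(X,\mu)}^2$ from above using~\eqref{log Sob}. By Gross's hypercontractivity theorem, \eqref{log Sob} is equivalent to $\|P_s\|_{L^2\to L^{q(s)}}\le 1$ with $q(s)=1+e^{4\rho_0 s}$; dualizing via the self-adjointness of $P_s$ gives $\|P_s\|_{L^{q'(s)}\to L^2}\le 1$ with $q'(s)=1+e^{-4\rho_0 s}$. Applying this to the indicator $\mathbf 1_A$, whose $L^{q'}$-norm is $m^{1/q'}$, with $s=t/2$ produces $\|P_{t/2}\mathbf 1_A\|_{L^2(X,\mu)}^2\le m^{2/q'(t/2)}=m^{1+\tanh(\rho_0 t)}$, where I used $2/(1+e^{-2\rho_0 t})=1+\tanh(\rho_0 t)$. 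The care here is entirely in tracking the Gross constant so that the paper's normalization $\mathrm{Ent}_\mu(f^2)\le \rho_0^{-1}\mathcal E(f,f)$ produces exactly the exponent $\tanh(\rho_0 t)$; this is the main obstacle, since every factor of $2$ in the hypercontractivity threshold feeds into the final constant.

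Substituting the two estimates I obtain $P_{1/2}(A)\ge \tfrac{2}{t^{1/2}}\,m\bigl(1-m^{\tanh(\rho_0 t)}\bigr)$ for all $t>0$, and it remains to optimize in $t$. Writing $a=-\ln m\ge\ln 2$ and choosing $t=1/(\rho_0 a)$ turns the prefactor into $2\sqrt{\rho_0}\,\sqrt a$, while $1-m^{\tanh(\rho_0 t)}=1-e^{-a\tanh(1/a)}$ stays above a positive numerical constant because $a\mapsto a\tanh(1/a)$ is bounded below away from zero on $[\ln 2,\infty)$. This gives $P_{1/2}(A)\ge c\sqrt{\rho_0}\,m\sqrt{-\ln m}$ with $c>0$ numerical, hence $\|\mathbf 1_A\|_{1,1/2}/\bigl(m\sqrt{-\ln m}\bigr)\ge c\sqrt{\rho_0}$; taking the infimum over admissible $A$ yields $k\ge c\sqrt{\rho_0}$, i.e.\ $\rho_0\le C_{\text{l}}k^2$ with $C_{\text{l}}=c^{-2}$. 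Apart from the hypercontractivity normalization and the uniform lower bound on $a\tanh(1/a)$, everything is a direct reuse of Proposition~\ref{prop:iso} and Lemma~\ref{pseudo-Poincare}.
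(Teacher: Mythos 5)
Your proof is correct and follows essentially the same route as the paper: the identity from Proposition~\ref{prop:iso}, the pseudo-Poincar\'e inequality of Lemma~\ref{pseudo-Poincare}, and Gross hypercontractivity applied to $\mathbf 1_A$, arriving at the same key bound $\sqrt{t}\,\|\mathbf 1_A\|_{1,1/2} \ge 2\mu(A)\bigl(1-\mu(A)^{\tanh(\rho_0 t)}\bigr)$. The only difference is that where the paper delegates the final optimization in $t$ to the computation on page 956 of Ledoux's paper, you carry it out explicitly (taking $t=1/(\rho_0 a)$ with $a=-\ln\mu(A)$ and bounding $a\tanh(1/a)$ from below), which makes the argument self-contained; your tracking of the Gross normalization is also more careful than the paper's.
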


\begin{proof}
Let $A$ be a measurable set such that $P(A):=\| \mathbf 1_A \|_{1,1/2} <+\infty$. By the same computations as before we have
\[
\mu(A) \le \frac{1}{2} \sqrt{t} P(A)+ \Vert P_\frac{t}{2} (\mathbf 1_A) \Vert_{L^2(X,\mu)}^2.
\]
Now we can use the log-Sobolev constant to bound $\Vert P_\frac{t}{2} (\mathbf 1_A) \Vert_2^2$. Indeed, from Gross' theorem it is well known that the logarithmic Sobolev inequality 
\[
\int_X f^2 \ln f^2 d\mu -\int_X f^2 d\mu \ln \int_X f^2 d\mu \le \frac{1}{\rho_0} \mathcal{E}(f,f),
\]
is equivalent to the following hypercontractivity property of the semigroup
$$
\Vert P_t f \Vert_{L^q(X,\mu)} \leq \Vert f \Vert_{L^p(X,\mu)}
$$
 for all $f$ in $L^p(X,\mu)$ whenever $1<p<q<\infty$ and $e^{\rho_0 t}\geq \sqrt \frac{q-1}{p-1}$. Therefore, with $p(t)= 1+e^{-2\rho_0 t}<2$, we get
\begin{align*}
 \sqrt{t} P(A) \geq & 2 \left( \mu(A)- \mu(A)^\frac{2}{p(t)} \right)\\
 \ge & 2 \mu(A) \left(1- \mu(A)^\frac{1-e^{2-\rho_0 t}}{1+e^{-2\rho_0 t}} \right).
\end{align*}
By using then the computation page 956 in \cite{Le}, one deduces that if $A$ is a set which has a finite $P(A)$ and such that 
$0\leq \mu(A)\leq \frac{1}{2}$, then 
$$
P( A)\geq \widetilde{C} \sqrt{ \rho_0} \mu(A)\left(\ln \frac{1}{\mu(A)} \right)^\frac{1}{2},
$$
where $\widetilde{C}$ is a numerical constant.
\end{proof}

\bibliographystyle{plain}
 \bibliography{BV_Refs}

\

\ 

\noindent
P.A.R.: \url{paruiz@math.tamu.edu}  \\
Department of Mathematics,
Texas A\&M University,
College Station, TX 77843

\

\noindent
N.S.: \url{shanmun@uc.edu} \\
Department of Mathematical Sciences, 	
P.O. Box 210025,
University of Cincinnati, 	
Cincinnati, OH 45221-0025

\

\noindent
F.B.: \url{fabrice.baudoin@uconn.edu}
L.C.: \url{li.4.chen@uconn.edu}
L.R.: \url{rogers@math.uconn.edu}
A.T.: \url{teplyaev@uconn.edu}\\
Department of Mathematics,
University of Connecticut,
Storrs, CT 06269

\end{document}